\newtheorem{thm}{Theorem}[section]
\newtheorem{prop}[thm]{Proposition}
\newtheorem{lem}[thm]{Lemma}
\newtheorem{cor}[thm]{Corollary}
\def\Ext{\text{Ext}}
\def\Hom{\text{Hom}}
\def\End{\text{End}}
\def\dim{\text{dim}_{\mathbb{C}}}
\def\dimv{\underline{\text{dim}}}
\def\rep{\text{rep}}
\def\modl{\text{mod}}
\def\ad{\text{ad}}
\def\ind{\text{ind}}
\def\prep{\text{Prep}}
\def\prei{\text{Prei}}
\def\reg{\text{Reg}}
\def\mbf1{\mathbf{1}}
\def\qrad{\text{qrad}}
\def\mbfc{\mathbf{c}}
\begin{document}

\title[Tame quivers and affine enveloping algebras]{Tame quivers and affine enveloping algebras}

\author{Yong Jiang}
\address{Department of Mathematics, Tsinghua University, Beijing 100084, P.R.China}
\email{jiangy00@mails.thu.edu.cn}

\author{Jie Sheng}
\address{Department of Mathematics, Tsinghua University, Beijing 100084, P.R.China}
\email{shengjie00@mails.thu.edu.cn}

\thanks{
2000 Mathematics Subject Classification: 16G20, 17B35.\\
supported in part by NSF of China (No. 10631010) and by NKBRPC (No.
2006CB805905) }

\keywords{tame quiver, Hall algebra, affine enveloping algebra}

\bigskip

\begin{abstract}
Let $\mathfrak{g}$ be an affine Kac-Moody algebra with symmetric
Cartan datum, $\mathfrak{n^{+}}$ be the maximal nilpotent subalgebra
of $\mathfrak{g}$. By the Hall algebra approach, we construct
integral bases of the $\mathbb{Z}$-form of the enveloping algebra
$U(\mathfrak{n^{+}})$. In particular, the representation theory of
tame quivers is essentially used in this paper.
\end{abstract}

\maketitle

\section{Introduction}

\subsection{} There are remarkable connections between the representation
theory of quivers and Lie theory. For a complex semisimple Lie
algebra $\mathfrak{g}$, let $Q$ be the quiver given by orienting the
Dynkin diagram of $\mathfrak{g}$. In \cite{gab}, P. Gabriel
discovered that the set of isomorphic classes of indecomposable
representations of $Q$ is in one-to-one correspondence with the set
of positive roots of $\mathfrak{g}$. A direct construction of the
Lie algebra $\mathfrak{g}$ using the representations of quivers was
given by C. M. Ringel. Roughly speaking, he defined the Hall algebra
$\mathcal{H}(Q)$ using representations of $Q$ over a finite field
and then proved that $\mathcal{H}(Q)$ is isomorphic to the positive
part of the quantum group $U_{v}^{+}(\mathfrak{g})$, see \cite{r1}
\cite{r3}. This result was generalized to the case of Kac-Moody
algebra in \cite{gr}, where it was showed that the composition
subalgebra $\mathcal{C}(Q)$ provides a realization of
$U_{v}^{+}(\mathfrak{g})$. Thus when $v$ specializes to 1, the Hall
algebra degenerates to the enveloping algebra $U(\mathfrak{n^{+}})$,
where $\mathfrak{n^{+}}$ is the maximal nilpotent subalgebra of
$\mathfrak{g}$, see \cite{r2}.

In \cite{l1}, G. Lusztig gave a geometric definition of
$\mathcal{H}(Q)$, which is a modified version of A. Schofield
\cite{s}. Namely, he used the constructible functions on varieties
of $\mathbb{C}(Q)$-modules. The Euler characteristics appeared in
the definition of multiplication. Similar as in the  quantum case,
the composition algebra $\mathcal{C}(Q)$ is isomorphic to the
enveloping algebra $U(\mathfrak{n^{+}})$ (see Theorem \ref{thm 3
Ringel-Green}). In the finite and affine case, a Chevalley basis of
$\mathfrak{n^{+}}$ was reconstructed by the approach of Hall algebra
in \cite{fmv} (see Proposition \ref{prop 8 fmv tame}). Thus the
structure constants of this basis, which is the Euler
characteristics of certain varieties, is given by the cocycles.

\subsection{} For any complex simisimple Lie algebra
$\mathfrak{g}$, Kostant defined a $\mathbb{Z}$-subalgebra
$U_{\mathbb{Z}}$ of the universal enveloping algebra
$U(\mathfrak{g})$ using divided powers of the Chevalley basis, which
is the well-known Kostant $\mathbb{Z}$-form \cite{ko}. Then he
constructed a $\mathbb{Z}$-basis of $U_{\mathbb{Z}}$. These results
were generalized to the affine Kac-Moody algebra by H. Garland, in
\cite{gal}. He defined the root vectors using the loop algebra
structure of the affine Kac-Moody algebra. The $\mathbb{Z}$-form is
defined as the $\mathbb{Z}$-subalgebra generated by the divided
powers of all real root vectors. And he also construct a
$\mathbb{Z}$-basis of $U_{\mathbb{Z}}$. Given an order on the set of
positive roots. The basis elements given in \cite{gal} are ordered
monomials of the following generators: the divided powers of real
root vectors and certain functions of imaginary root vectors. The
method in \cite{gal} is not representation-theoretic and the proof
of the integrality of the basis is difficult, using some complicated
combinatorial identities.

\subsection{} In this paper, we will construct $\mathbb{Z}$-bases of
$U_{\mathbb{Z}}(\mathfrak{n^{+}})$ for affine Kac-Moody algebras by
the Hall algebra approach. The representation theory of tame
quivers, especially the structure of the Auslander-Reiten-quiver is
essentially used in our method. The AR-quiver $\Gamma_{Q}$, whose
vertices are isomorphic classes of indecomposable
$\mathbb{C}Q$-modules and arrows are irreducible morphisms, gives a
nice description of the category of $\mathbb{C}Q$-modules. In
\cite{fmv}, one can already see that the real root vectors not only
come from the preprojective or preinjective components of
$\Gamma_{Q}$ but also come from the non-homogeneous tubes in the
regular component. Furthermore, the behaviors of the imaginary root
vectors arising from homogeneous tubes and non-homogeneous tubes are
quite different.

Therefore, we construct basis elements from the components of the
AR-quiver respectively. More precisely, the basis elements we
construct arise from the preprojective component, the preinjective
component, each non-homogeneous tube and an embedding of the module
category of the Kronecker quiver respectively. Then the ordered
monomials of those basis elements form the desired integral basis.
In particular, the order is given by the structure of the AR-quiver.
In this way, we generalize the main results of Frenkel, Malkin and
Vybornov in \cite{fmv} to the enveloping algebra level.

\subsection{} One key to prove the integrality of our bases is that
the Euler characteristics are always integers. Thus when we
evaluated the product of two characteristic functions of certain
constructible sets at any point, we get an integer (see \ref{3 char
function} for details). However, this is not enough since not every
basis element can be made as a single characteristic function.
Moreover, the supports of two basis elements may have common points.
So we have to find good constructible functions to be the basis
elements. The most difficult part is the choice in the homogeneous
tubes, where our idea comes from both representation theory and the
theory of symmetric functions (see \ref{6 the function M and E}).

\subsection{} Let's say something about the quantum case. There are
several results in constructing $\mathbb{Z}[v,v^{-1}]$-bases of the
integral form of $U_{v}^{+}$ using the Hall algebra. Ringel has
construct an integral PBW-basis of $U_{v}^{+}$ in the case of finite
type \cite{r4}. Later a $\mathbb{Q}(v)$-basis of type $A_{1}^{(1)}$
was given in \cite{z} and it was improved to be a
$\mathbb{Z}[v,v^{-1}]$-basis in \cite{c}. For the affine case, in
\cite{lxz}, a PBW-basis of $U_{v}^{+}$ was given as a step to
construct the canonical basis by an algebraic method. The method in
the present paper obviously stems from that of \cite{lxz}. However,
the basis given there is a $\mathbb{Q}[v,v^{-1}]$-basis. Although it
has been proved in \cite{lxz} that this basis has a nice connection
with the canonical basis, it seems difficult to prove that it is
actually a $\mathbb{Z}[v,v^{-1}]$-basis only using algebraic
methods.

\subsection{} The paper is organized as follows: In Section \ref{sec
2}, we make necessary notations and recall some basic definitions.
In Section \ref{sec 3}, we recall the definition of Hall algebra.
For convenience, we use the geometric version, following Lusztig
\cite{l1}. We also calculate the product of characteristic functions
in two easy cases. A brief review of the representation theory of
tame quivers is given in Section \ref{sec 4}, for the details one
can see \cite{dr}. In Section \ref{sec 5} we focus on the
preprojective and preinjective modules. We define two subalgebras
$\mathcal{C}(Q)^{prep}$, $\mathcal{C}(Q)^{prei}$ and construct
$\mathbb{Z}$-bases of them respectively. The arguments in this
section are similar to the quantum case, see \cite{r4} which
considered the case of finite type. Moreover, the results in this
section are valid for arbitrary quiver without oriented cycles, not
only tame. Sections \ref{sec 6} and \ref{sec 7} are devoted to the
construction of basis elements arising from the regular components.
In Section \ref{sec 6}, we consider the Kronecker quiver $K$ and
construct a $\mathbb{Z}$-basis of $\mathcal{C}_{\mathbb{Z}}(K)$. The
most important part of this section is the construction of basis
elements from the regular components. In Section \ref{sec 7}, we
consider the cyclic quiver $C_{r}$ and construct a
$\mathbb{Z}$-basis of $\mathcal{C}_{\mathbb{Z}}(C_{r})$. This
provides the basis elements arising from the non-homogeneous tubes.
The method we used in this section comes from \cite{ddx}. Finally,
in Section \ref{sec 8}, we combine the results from Section \ref{sec
5} to \ref{sec 7} to obtain integral bases of
$\mathcal{C}_{\mathbb{Z}}(Q)$.

\section{Notations and preliminaries}\label{sec 2}

\subsection{Cartan datum}\label{2 Cartan datum}
Following Lusztig \cite{l2}, a \textit{Cartan datum} is a pair
$(I,(-,-))$ consisting of a finite set $I$ and a bilinear form
$(-,-): \mathbb{Z}[I]\times\mathbb{Z}[I]\rightarrow\mathbb{Z}$ which
satisfies the following conditions:
$$(i,i)=2,\ \text{for all}\ i\in I;$$
$$(i,j)\leq 0,\ \text{for all}\ i\neq j;$$
$$(i,j)=0\ \text{if and only if}\ (j,i)=0.$$
Note that if we set $a_{ij}=(i,j)$ then the matrix $A=(a_{ij})$ is a
generalized Cartan matrix.

A Cartan datum is said to be \textit{irreducible} if the
corresponding Cartan matrix cannot be made block-diagonal by
simultaneous permutations of rows and columns. It is said to be
\textit{symmetric} if $(i,j)=(j,i)$ for all $i,j\in I$. In this
paper we always assume that the Cartan datum is irreducible and
symmetric.

A Cartan datum is said to be \textit{of finite type} (resp.
\textit{affine}) if the corresponding Cartan matrix is positive
definite (resp. positive semi-definite). A Cartan datum is said to
be \textit{simply-laced} if $(i,j)\in \{0,-1\}$ for all $i,j\in I$.

\subsection{Kac-Moody algebras and their enveloping algebras}\label{2 KM alg and enveloping alg}
For a given Cartan datum there is the corresponding Kac-Moody
algebra $\mathfrak{g}$. $\mathfrak{g}$ has the triangular
decomposition
$\mathfrak{g}\simeq\mathfrak{n}^{+}\oplus\mathfrak{h}\oplus\mathfrak{n}^{-}$
with $\mathfrak{n}^{+}$, $\mathfrak{n}^{-}$ the maximal nilpotent
subalgebras and $\mathfrak{h}$ the Cartan subalgebra. The universal
enveloping algebra $U=U(\mathfrak{g})$ is the $\mathbb{C}$-algebra
generated by $\{e_{i},f_{i},h_{i}|i\in I\}$ with the following
relations:
\begin{gather*}
[h_{i},h_{j}]=0,\ \text{for all} \ i,j\in I;\\
[e_{i},f_{j}]=\delta_{ij}h_{i},\ \text{for all} \ i,j\in I;\\
[h_{i},e_{j}]=(i,j)e_{j},\ \text{for all} \ i,j\in I;\\
[h_{i},f_{j}]=-(i,j)f_{j},\ \text{for all} \ i,j\in I;\\
\sum_{k=0}^{1-(i,j)}(-1)^{k} \binom{1-(i,j)}{k}
e_{i}^{k}e_{j}e_{i}^{1-(i,j)-k}=0,\ \text{for} \ i\neq j;\\
\sum_{k=0}^{1-(i,j)}(-1)^{k} \binom{1-(i,j)}{k}
f_{i}^{k}f_{j}f_{i}^{1-(i,j)-k}=0,\ \text{for} \ i\neq j.
\end{gather*}

Let $U^{+}$ (resp. $U^{-}$, $U^{0}$) be the subalgebra of $U$
generated by $\{e_{i}\}_{i\in I}$ (resp. $\{f_{i}\}_{i\in I}$,
$\{h_{i}\}_{i\in I}$). We know that $U\simeq U^{+}\otimes
U^{0}\otimes U^{-}$. Actually $U^{+}$ (resp. $U^{-}$) is the
universal enveloping algebra of $\mathfrak{n}^{+}$ (resp.
$\mathfrak{n}^{-}$). The \textit{Kostant} $\mathbb{Z}$-\textit{form}
$U_{\mathbb{Z}}$ is defined as the $\mathbb{Z}$-subalgebra of $U$
generated by $e_{i}^{(n)}$ and $f_{i}^{(n)}$, for all $i\in I$ and
$n\in \mathbb{N}$, where $e_{i}^{(n)}=e_{i}^{n}/n!$,
$f_{i}^{(n)}=f_{i}^{n}/n!$ are called \textit{divided powers}. Let
$U_{\mathbb{Z}}^{+}=U^{+}\cap U_{\mathbb{Z}}$ (resp.
$U_{\mathbb{Z}}^{-}=U^{-}\cap U_{\mathbb{Z}}$) be the
$\mathbb{Z}$-subalgebra of $U$ generated by $e_{i}^{(n)}$ (resp.
$f_{i}^{(n)}$) .

\subsection{Quivers and their representations}\label{2 quivers and rep}
A \textit{quiver} is an oriented graph $Q=(I,\Omega,s,t)$ where $I$
is the set of vertices, $\Omega$ is the set of arrows, $s,t$ are two
maps from $\Omega$ to $I$ denoting the starting and terminal vertex
respectively. In this paper we consider quivers without loops (i.e.
arrows from one vertex to itself).

A quiver is called \textit{of finite type} (\textit{tame}) if the
corresponding Cartan datum is of finite type (resp. affine). Thus
the underlying graph of a tame quiver is of type $A_{n}^{(1)}$,
$D_{n}^{(1)}$, $E_{6}^{(1)}$, $E_{7}^{(1)}$ or $E_{8}^{(1)}$.

A \textit{representation} of $Q$ over $\mathbb{C}$ is an $I$-graded
$\mathbb{C}$-vector space $V=\oplus_{i\in I}V_{i}$ with a collection
of linear maps $x=(x_{h})_{h\in \Omega}\in \oplus_{h\in
\Omega}\Hom_{\mathbb{C}}(V_{s(h)},V_{t(h)})$. A \textit{morphism}
from a representation $(V,x)$ to another representation $(V',x')$ is
an $I$-graded $\mathbb{C}$-linear map $\phi:V\rightarrow V'$ such
that $x_{h}'\phi_{s(h)}=\phi_{t(h)}x_{h}$ for any $h\in \Omega$.

The \textit{dimension vector} of a representation $M=(V,x)$ is
defined as a vector $\dimv M=\sum_{i\in I} (\dim V_{i})i \in
\mathbb{N}[I]$. A representation $(V,x)$ is called finite
dimensional if $V_{i}$ is finite dimensional for all $i$.

A representation $(V,x)$ of $Q$ is called \textit{nilpotent} if
there exists $N\in \mathbb{N}$ such that $x_{h_{N}}\cdots
x_{h_{1}}=0$ for any sequence $h_{1},\cdots,h_{N}\in \Omega$ with
$t(h_{i})=s(h_{i+1})$.

Denote by $\rep(Q)$ the category of finite dimensional
representations of $Q$. We know that $\rep(Q)$ is equivalent to
$\modl\mathbb{C}Q$, the category of finite dimensional left
$\mathbb{C}Q$-modules, where $\mathbb{C}Q$ is the path algebra of
$Q$. By this reason we will use \textit{$\mathbb{C}Q$-modules} or
\textit{representations of $Q$} freely in the sequel, and we will
just write modules or representations for short when $Q$ is fixed.
Denote by $\rep_{0}(Q)$ the full subcategory of $\rep(Q)$ consisting
of all nilpotent representations. Note that if $Q$ has no oriented
cycles, we have $\rep(Q)=\rep_{0}(Q)$.

The isomorphic classes of simple objects in $\rep_{0}(Q)$ are in
one-to-one correspondence with the vertices of $Q$. Namely, for each
$i\in I$, set $V_{i}=\mathbb{C}$, $V_{j}=0$ for $j\neq i$ and $x=0$.
Then the module $(V,x)$ is simple, denoted by $S_{i}$.

\subsection{Euler forms}\label{2 Euler forms}
Let $(I,(-,-))$ be a Cartan datum, we have the corresponding Dynkin
diagram, which is an unoriented graph. Giving any orientation of the
graph we obtain a quiver $Q$. $Q$ is said to be a quiver
corresponding to $(I,(-,-))$. Conversely, the Cartan datum
$(I,(-,-))$ can be recovered from any quiver corresponding to it.

More precisely, we define a bilinear form $\langle-,-\rangle:
\mathbb{Z}[I] \times \mathbb{Z}[I] \rightarrow \mathbb{Z}$ given by
$\langle i,j
\rangle=\delta_{ij}-\sharp\{h\in\Omega|s(h)=i,t(h)=j\}$, where
$\delta_{ij}$ is the Kronecker symbol and $\sharp$ denote the number
of elements in a set. This form is called the \textit{Euler form}.
We know that for any $M,N\in\rep(Q)$,
$$\langle\dimv M,\dimv N\rangle=\dim\Hom(M,N)-\dim\Ext^{1}(M,N).$$

The \textit{symmetric Euler form} is defined as
$(\alpha,\beta)=\langle\alpha,\beta\rangle+\langle\beta,\alpha\rangle$,
for any $\alpha,\beta\in\mathbb{Z}[I]$. Then $(I,(-,-))$ is a Cartan
datum, the corresponding Dynkin diagram of which is just the
underlying graph of $Q$.

In the sequel, we will write $\mathfrak{g}(Q)$ for the Kac-Moody
algebra $\mathfrak{g}$ with Cartan datum corresponding to $Q$.\\
\\

\section{The Hall algebra}\label{sec 3}

\subsection{Constructible functions}\label{3 constr function}
Let $X$ be an algebraic variety over $\mathbb{C}$. A subset of $X$
is called \textit{locally closed} if it is the intersection of an
open and a closed subset. A \textit{constructible} set in $X$ is a
union of finite many locally closed subset of $X$. A function $f:X
\rightarrow \mathbb{C}$ is called \textit{constructible} if $f(X)$
is a finite set and $f^{-1}(m)$ is a constructible subset of $X$ for
all $m\in \mathbb{C}$. We denote by $\mathcal{M}(X)$ the set of all
constructible functions on $X$ with values in $\mathbb{C}$.
$\mathcal{M}(X)$ is naturally a $\mathbb{C}$-vector space. Let $G$
be an algebraic group acting on $X$. We denote by
$\mathcal{M}(X)^{G}$ the subspace of $\mathcal{M}(X)$ consisting of
all $G$-invariant constructible functions.

Let $\phi:X\rightarrow Y$ be a morphism of algebraic varieties. We
can define two linear maps
$\phi^{\ast}:\mathcal{M}(Y)\rightarrow\mathcal{M}(X)$ and
$\phi_{!}:\mathcal{M}(X)\rightarrow\mathcal{M}(Y)$ as follows:
$$\phi^{\ast}(g)(x)=g(\phi(x))$$
for any $g\in \mathcal{M}(Y)$ and $x\in X$.
$$\phi_{!}(f)(y)=\sum_{a\in\mathbb{C}}a\chi(\phi^{-1}(y)\cap f^{-1}(a))$$
for any $f\in \mathcal{M}(X)$ and $y\in Y$, where $\chi$ denotes the
Euler characteristic with compact support.

\subsection{Varieties of representations}\label{3 variety of rep}
Given a quiver $Q$ and a fixed dimension vector $\alpha=\sum_{i\in
I}\alpha_{i}i\in\mathbb{N}[I]$, denote by $\mathbf{E}_{\alpha}$ the
set of all representations of $Q$ with dimension vector $\alpha$.
i.e.
$$\mathbf{E}_{\alpha}=\prod_{h\in
\Omega
}\Hom_{\mathbb{C}}(\mathbb{C}^{\alpha_{s(h)}},\mathbb{C}^{\alpha_{t(h)}})$$
Hence $\mathbf{E}_{\alpha}$ is a affine space, in particular, an
affine algebraic variety. Let $G_{\alpha}=\prod_{i\in
I}GL(\alpha_{i},\mathbb{C})$. The group $G_{\alpha}$ acts on
$\mathbf{E}_{\alpha}$ by $g(x_{h})=g_{t(h)}x_{h}g_{s(h)}^{-1}$, for
any $g=(g_{i})_{i\in I}$, $x=(x_{h})_{h\in \Omega}$. Let
$\mathbf{E}_{\alpha}^{nil}$ be the subset of all nilpotent
representations in $\mathbf{E}_{\alpha}$. It is easy to see that
$\mathbf{E}_{\alpha}^{nil}$ is a closed subvariety of
$\mathbf{E}_{\alpha}$. The group $G_{\alpha}$ also acts on
$\mathbf{E}_{\alpha}^{nil}$.

\subsection{The Hall algebra}\label{3 Hall alg}
To simplify the notations, we write
$\mathcal{M}(\mathbf{E}_{\alpha}^{nil})^{G_{\alpha}}$ as
$\mathcal{H}_{\alpha}(Q)$. Let
$\mathcal{H}(Q)=\oplus_{\alpha\in\mathbb{N}[I]}\mathcal{H}_{\alpha}(Q)$.

Lusztig \cite{l1} has defined a bilinear map
$$\ast:\mathcal{H}_{\alpha}(Q)\times\mathcal{H}_{\beta}(Q)\rightarrow\mathcal{H}_{\gamma}(Q).$$
for any $\alpha,\beta,\gamma\in \mathbb{N}[I]$ such that
$\alpha+\beta=\gamma$. Then an $\mathbb{N}[I]$-graded multiplication
can be endowed with $\mathcal{H}(Q)$.

The map $\ast$ is defined as follows: Consider the following
diagram:
$$\mathbf{E}_{\alpha}^{nil}\times\mathbf{E}_{\beta}^{nil}\xleftarrow{p_{1}}\mathbf{E}'
\xrightarrow{p_{2}}\mathbf{E}''\xrightarrow{p_{3}}\mathbf{E}_{\gamma}^{nil}$$
where the notations are as follows:

$\mathbf{E}'$ is the variety of all pairs $(W,x)$ consisting of
$x\in \mathbf{E}_{\gamma}^{nil}$ and an $x$-stable $I$-graded
subspace of $\mathbb{C}^{\gamma}$ such that $\dimv W=\beta$;

$\mathbf{E}''$ is the variety of all quadruples
$(x,W,R^{\alpha},R^{\beta})$, where $(x,W)\in \mathbf{E}'$,
$R^{\beta}$ is an isomorphism $\mathbb{C}^{\beta}\simeq W$,
$R^{\alpha}$ is an isomorphism
$\mathbb{C}^{\alpha}\simeq\mathbb{C}^{\gamma}/W$;

$p_{1}(x,W,R^{\alpha},R^{\beta})=(x^{\alpha},x^{\beta})$, where
$x_{h}R^{\alpha}_{s(h)}=R^{\alpha}_{t(h)}x^{\alpha}_{h}$ and
$x_{h}R^{\beta}_{s(h)}=R^{\beta}_{t(h)}x^{\beta}_{h}$;

$p_{2}(x,W,R^{\alpha},R^{\beta})=(x,W)$; $p_{3}(x,W)=x$.

Note that $p_{1}$ is smooth with connected fibres, $p_{2}$ is a
principal $G_{\alpha}\times G_{\beta}$-bundle and $p_{3}$ is proper.

Now we can define a convolution product of constructible functions

For $f_{\alpha}\in\mathcal{H}_{\alpha}(Q)$,
$f_{\beta}\in\mathcal{H}_{\beta}(Q)$, we let $f_{1}$ be a
constructible function on
$\mathbf{E}_{\alpha}^{nil}\times\mathbf{E}_{\beta}^{nil}$ given by
$f_{1}(x_{1},x_{2})=f_{\alpha}(x_{1})f_{\beta}(x_{2})$ for any
$x_{1}\in\mathbf{E}_{\alpha}^{nil}$,
$x_{2}\in\mathbf{E}_{\beta}^{nil}$. Then there is a unique function
$f_{2}\in\mathcal{M}(\mathbf{E}'')$ such that
$p_{1}^{\ast}f_{1}=p_{2}^{\ast}f_{2}$. We define $f_{\alpha}\ast
f_{\beta}$ as $(p_{3})_{!}(f_{2})$.

The $\mathbb{C}$-space $\mathcal{H}(Q)$ equipped with the
multiplication $\ast$ is an $\mathbb{N}[I]$-graded associative
$\mathbb{C}$-algebra, called the \textit{Hall algebra}. In the
sequel we will omit the operator $\ast$.

\subsection{Characteristic functions}\label{3 char function}
For a fixed dimension vector $\alpha$ and a $G_{\alpha}$-invariant
constructible subset $\mathcal{O}$ of $\mathbf{E}_{\alpha}^{nil}$,
we have the \textit{characteristic function} of $\mathcal{O}$, which
is defined as the function taking the value $1$ on $\mathcal{O}$ and
$0$ elsewhere. We denote the function by $\mbf1_{\mathcal{O}}$. It
is obvious that $\mbf1_{\mathcal{O}}\in\mathcal{H}_{\alpha}(Q)$.

For any $M\in\mathbf{E}_{\alpha}^{nil}$, the $G_{\alpha}$-orbit of
$M$ is denoted by $\mathcal{O}_{M}$. In particular,
$\mathcal{O}_{M}$ is a constructible subset of
$\mathbf{E}_{\alpha}^{nil}$. In this case we just write $\mbf1_{M}$
instead of $\mbf1_{\mathcal{O}_{M}}$.

Let $M\in\mathbf{E}_{\alpha}^{nil}$, $N\in\mathbf{E}_{\beta}^{nil}$.
The definition of the multiplication yields
$$\mbf1_{M}\mbf1_{N}(L)=\chi(\mathcal{F}(M,N;L)),\ \text{for any} \ L\in\mathbf{E}_{\alpha+\beta}^{nil},$$
where $\mathcal{F}(M,N;L)$ is the variety of all submodules $L'$ of
$L$ such that $L'\simeq N$ and $L/L'\simeq M$.

In general, let $\mathcal{O}_{1}$ (resp. $\mathcal{O}_{2}$) be a
$G_{\alpha}$ (resp. $G_{\beta}$)-invariant constructible subset of
$\mathbf{E}_{\alpha}^{nil}$ (resp. $\mathbf{E}_{\beta}^{nil}$), we
have
$$\mbf1_{\mathcal{O}_{1}}\mbf1_{\mathcal{O}_{2}}(M)=\chi(\mathcal{F}(\mathcal{O}_{1},\mathcal{O}_{2};M)),\ \text{for any} \ M\in\mathbf{E}_{\alpha+\beta}^{nil},$$
where $\mathcal{F}(\mathcal{O}_{1},\mathcal{O}_{2};M)$ is the
variety of all submodules $M'$ of $M$ such that
$M'\in\mathcal{O}_{2}$ and $M/M'\in\mathcal{O}_{1}$.

\subsection{The composition algebra}\label{3 composition alg}
For any $i\in I$, the simple module $S_{i}$ is the unique module
with dimension vector $i$. We write the characteristic function
$\mbf1_{S_{i}}$ simply as $\mbf1_{i}$.

Let $\mathcal{C}(Q)$ be the $\mathbb{C}$-subalgebra of
$\mathcal{H}(Q)$ generated by $\mbf1_{i}$, for all $i\in I$.
$\mathcal{C}(Q)$ is called the \textit{composition algebra}. The
following theorem is well-known (for example, see \cite{l1}):

\begin{thm}\label{thm 3 Ringel-Green}
For any quiver $Q$ without loops, the composition algebra
$\mathcal{C}(Q)$ is isomorphic to the positive part of the
enveloping algebra $U^{+}=U^{+}(\mathfrak{g}(Q))$. This isomorphism
is given by $\mbf1_{i}\mapsto e_{i}$ for any $i\in I$.
\end{thm}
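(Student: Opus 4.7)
The plan is to produce an algebra homomorphism $\Phi: U^{+} \to \mathcal{C}(Q)$ given on generators by $\Phi(e_i) = \mbf1_i$, and then show it is a bijection. Surjectivity onto $\mathcal{C}(Q)$ is immediate from the definition of the composition algebra, so the real work lies in checking that $\Phi$ is well-defined (i.e.\ that the Serre relations hold in $\mathcal{C}(Q)$) and that $\Phi$ is injective.

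For well-definedness, I would verify the Serre relations
$$\sum_{k=0}^{1-(i,j)}(-1)^{k} \binom{1-(i,j)}{k}\, \mbf1_i^{k}\, \mbf1_j\, \mbf1_i^{1-(i,j)-k} = 0, \qquad i \neq j,$$
directly in $\mathcal{H}(Q)$. Every module supporting such a product is concentrated on the two vertices $i,j$, so the computation reduces to the rank-two subquiver on $\{i,j\}$. Two cases arise. When $(i,j)=0$ the vertices are disconnected, every representation with dimension vector $i+j$ is $S_i \oplus S_j$, the flag varieties $\mathcal{F}(S_i,S_j;L)$ and $\mathcal{F}(S_j,S_i;L)$ are single points, and the commutation $\mbf1_i \mbf1_j = \mbf1_j \mbf1_i$ follows. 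When $(i,j)=-1$ one verifies $\mbf1_i^{2}\mbf1_j - 2\mbf1_i\mbf1_j\mbf1_i + \mbf1_j\mbf1_i^{2} = 0$ by enumerating the finitely many $G_{2i+j}$-orbits on $\mathbf{E}_{2i+j}^{nil}$ and computing Euler characteristics of the corresponding flag varieties, which turn out to be $0$, $1$, or $2$ and cancel exactly. Since no orientation of the arrows enters this calculation, the argument is uniform over all quivers without loops.

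The main obstacle is injectivity, which I would attack by a weight-space dimension comparison. Because $\Phi$ respects the $\mathbb{N}[I]$-grading, it suffices to prove $\dim \mathcal{C}(Q)_\alpha = \dim U^{+}_\alpha$ for each $\alpha \in \mathbb{N}[I]$. The inequality $\dim \mathcal{C}(Q)_\alpha \leq \dim U^{+}_\alpha$ is automatic from the surjection produced above; the reverse inequality requires exhibiting enough linearly independent constructible functions in $\mathcal{C}(Q)_\alpha$. In the Dynkin case, Gabriel's theorem lets one attach to each positive root $\beta$ an indecomposable $M_\beta$, verify that $\mbf1_{M_\beta} \in \mathcal{C}(Q)$, and prove linear independence of the PBW-style ordered monomials in the $\mbf1_{M_\beta}$ by examining their supports on the finitely many $G_\alpha$-orbits. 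For general $Q$ the same scheme works once the imaginary-root directions are treated separately, or alternatively one may specialize the quantum Ringel--Green theorem at $v=1$; this is essentially the route taken in \cite{l1} and is the cleanest way to close the argument.
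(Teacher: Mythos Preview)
The paper does not actually prove this theorem: it states it as well-known and cites \cite{l1}. So there is no ``paper's own proof'' to compare against; your proposal stands on its own as an attempted justification.

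That said, your sketch has a genuine gap in the well-definedness step. You write ``Two cases arise'' and then treat only $(i,j)=0$ and $(i,j)=-1$. But the theorem is stated for \emph{any} quiver without loops, and such quivers may have multiple arrows between a pair of vertices, giving $(i,j)=-m$ for arbitrary $m\geq 0$. The Kronecker quiver, which the paper uses extensively, has $(i,j)=-2$, and the relevant Serre relation there is cubic in $\mbf1_i$. Your orbit-enumeration argument for $(i,j)=-1$ does not automatically extend: for $(i,j)=-m$ one must analyse $\mathbf{E}_{(m+1)i+j}^{nil}$, whose orbit structure and flag-variety Euler characteristics are more involved (though still tractable---the computation reduces to Grassmannians in $\mathbb{C}^m$ and ultimately to a binomial identity). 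Your closing sentence ``the argument is uniform over all quivers without loops'' is therefore not supported by what precedes it.

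Your injectivity discussion is honest about its own incompleteness: the Dynkin case is clean, but for general $Q$ you defer to either an unspecified treatment of imaginary roots or to the $v\to 1$ specialisation of the quantum result. The latter is indeed how one usually closes the argument, and you flag this correctly; just be aware that invoking it makes the proof non-self-contained in exactly the way the paper's bare citation already is.
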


We can also define the $\mathbb{Z}$-form of the composition algebra
(or the integral composition algebra) $\mathcal{C}_{\mathbb{Z}}(Q)$,
which is the $\mathbb{Z}$-subalgebra of $\mathcal{H}(Q)$ generated
by the divided powers $\mbf1_{i}^{(n)}$, for all $i\in I$ and
$n\in\mathbb{N}$.

The following corollary can be seen immediately from the theorem.

\begin{cor}\label{cor 3 z-form isom}
The integral composition algebra $\mathcal{C}_{\mathbb{Z}}(Q)$ is
isomorphic to $U_{\mathbb{Z}}^{+}$.
\end{cor}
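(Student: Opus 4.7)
The plan is to derive the corollary as a direct restriction of the isomorphism of Theorem \ref{thm 3 Ringel-Green}. The isomorphism $\Phi:\mathcal{C}(Q)\to U^{+}$ is a $\mathbb{C}$-algebra map sending $\mbf1_{i}\mapsto e_{i}$, so it commutes with taking $n$-th powers and (since we are over $\mathbb{C}$) with scaling by $1/n!$. Consequently $\Phi(\mbf1_{i}^{(n)})=\Phi(\tfrac{1}{n!}\mbf1_{i}^{n})=\tfrac{1}{n!}e_{i}^{n}=e_{i}^{(n)}$ for every $i\in I$ and $n\in\mathbb{N}$. Since $U_{\mathbb{Z}}^{+}$ is by definition the $\mathbb{Z}$-subalgebra of $U^{+}$ generated by the $e_{i}^{(n)}$ and $\mathcal{C}_{\mathbb{Z}}(Q)$ is the $\mathbb{Z}$-subalgebra of $\mathcal{C}(Q)$ generated by the $\mbf1_{i}^{(n)}$, $\Phi$ restricts to a $\mathbb{Z}$-algebra homomorphism $\mathcal{C}_{\mathbb{Z}}(Q)\to U_{\mathbb{Z}}^{+}$, which is surjective because the generators are hit, and injective because $\Phi$ itself is.

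Before invoking this, I should justify that $\mbf1_{i}^{(n)}$ indeed makes sense inside $\mathcal{H}(Q)$. This is automatic since $\mathcal{H}(Q)$ is a $\mathbb{C}$-algebra; however, it is useful (and conceptually transparent for what follows in the paper) to identify this divided power explicitly. Since $Q$ has no loops, the only representation of dimension vector $ni$ is the semisimple module $nS_{i}$, so $\mathbf{E}_{ni}^{nil}$ is a single point and $\mbf1_{nS_{i}}$ spans $\mathcal{H}_{ni}(Q)$. Applying the description of the product in \ref{3 char function},
\[
\mbf1_{i}^{n}(nS_{i}) \;=\; \chi\!\bigl(\mathcal{F}(S_{i},\dots,S_{i};nS_{i})\bigr),
\]
the Euler characteristic of the variety of complete flags of $n$-dimensional vector spaces, which equals $n!$. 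Hence $\mbf1_{i}^{n}=n!\,\mbf1_{nS_{i}}$, i.e.\ $\mbf1_{i}^{(n)}=\mbf1_{nS_{i}}\in\mathcal{H}(Q)$, confirming that the divided powers are honest elements of the Hall algebra and hinting at the integrality phenomena that will be exploited in later sections.

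There is no real obstacle here: the content of the corollary is packaged entirely into Theorem \ref{thm 3 Ringel-Green}, with the only thing to verify being the compatibility of the generators $\mbf1_{i}^{(n)}$ and $e_{i}^{(n)}$ under $\Phi$. The small computation of $\mbf1_{i}^{n}$ via the full flag variety is the most concrete step, and it is a template for the more elaborate Euler-characteristic computations needed in the regular components later in the paper.
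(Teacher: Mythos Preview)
Your proof is correct and follows exactly the approach the paper intends: the paper gives no explicit argument beyond ``can be seen immediately from the theorem,'' and your unpacking---that $\Phi$ carries $\mbf1_{i}^{(n)}$ to $e_{i}^{(n)}$ and hence restricts to an isomorphism of the $\mathbb{Z}$-subalgebras they generate---is precisely what is meant. The additional computation identifying $\mbf1_{i}^{(n)}=\mbf1_{nS_{i}}$ is a special case of Lemma~\ref{lem 3 copies of exc} and, while not needed for the corollary itself, is a helpful warm-up.
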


\subsection{Some calculations}\label{3 calculation of Euler
char} In general, the calculation of the Euler Characteristic of a
variety is difficult. In this subsection, we give two formulas
dealing with special cases which we will use later.

For any $M\in\rep(Q)$, we denote by $tM$ the direct sum of $t$
copies of $M$. A module $M\in\rep(Q)$ is called \textit{exceptional}
if $\Ext^{1}(M,M)=0$.

\begin{lem}\label{lem 3 copies of exc}
For any exceptional module $M$ we have
$$\mbf1_{tM}=\mbf1_{M}^{(t)}.$$
\end{lem}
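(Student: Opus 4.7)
The plan is to prove this by induction on $t$, using the explicit formula for the convolution $\mathbf{1}_M \cdot \mathbf{1}_N$ derived in \ref{3 char function}. The base case $t=1$ is trivial since $\mathbf{1}_M^{(1)} = \mathbf{1}_M$. For the inductive step, assume $\mathbf{1}_M^{t-1} = (t-1)!\,\mathbf{1}_{(t-1)M}$, so that
$$\mathbf{1}_M^{t} = (t-1)!\, \mathbf{1}_M \cdot \mathbf{1}_{(t-1)M}.$$
By the formula in \ref{3 char function}, for any $L \in \mathbf{E}_{t\dimv M}^{nil}$,
$$(\mathbf{1}_M \cdot \mathbf{1}_{(t-1)M})(L) = \chi\bigl(\mathcal{F}(M,(t-1)M;L)\bigr),$$
where $\mathcal{F}(M,(t-1)M;L)$ is the variety of submodules $L' \subset L$ with $L' \simeq (t-1)M$ and $L/L' \simeq M$.

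The next step is to show the product vanishes unless $L \simeq tM$. Since $M$ is exceptional, $\Ext^{1}(M,M)=0$, and hence $\Ext^{1}(M,(t-1)M)=0$. Therefore every extension
$$0 \longrightarrow (t-1)M \longrightarrow L \longrightarrow M \longrightarrow 0$$
splits, forcing $L \simeq tM$. So it suffices to evaluate $\chi\bigl(\mathcal{F}(M,(t-1)M;tM)\bigr)$.

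For this I would identify $\mathcal{F}(M,(t-1)M;tM)$ with (an open subset of) the Grassmannian of submodules of $tM$ isomorphic to $(t-1)M$; equivalently, via the kernel-cokernel duality, with the variety of surjections $tM \twoheadrightarrow M$ modulo automorphisms of $M$. Using $\End(M) = \mathbb{C}$ (which holds for exceptional modules in the indecomposable, Schurian sense to which this lemma is ultimately applied, so $\Hom(tM,M) \simeq \mathbb{C}^{t}$), the surjections form an open subset of $\mathbb{C}^{t}$, and quotienting by $\Aut(M) = \mathbb{C}^{\ast}$ yields $\mathbb{P}^{t-1}$, whose Euler characteristic is $t$. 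Hence $(\mathbf{1}_{M}\cdot \mathbf{1}_{(t-1)M})(tM) = t$, which combined with the inductive hypothesis gives $\mathbf{1}_{M}^{t}(tM) = (t-1)!\cdot t = t!$, so $\mathbf{1}_{M}^{t} = t!\,\mathbf{1}_{tM}$ and dividing by $t!$ yields the claim.

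The main obstacle is the Euler-characteristic computation in the last step: one must argue that, despite $\mathcal{F}(M,(t-1)M;tM)$ not being closed in the ambient Grassmannian, its Euler characteristic equals $t$. The cleanest way is the projective-space identification above, but it implicitly relies on $\End(M)=\mathbb{C}$; if the statement is intended for general exceptional $M$ with $\End(M)$ possibly larger, one would instead decompose $M$ into indecomposable summands and invoke a Künneth-style argument, or argue directly that both sides agree on each $G_{t\dimv M}$-orbit by a generating-function (exponential) identity obtained by iterating the case $t=2$.
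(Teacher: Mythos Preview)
Your proof is correct and follows essentially the same idea as the paper's. The paper computes $\mbf1_{M}^{t}$ in one shot, identifying the variety of complete filtrations of $tM$ with factors isomorphic to $M$ with the complete flag variety in $\mathbb{C}^{t}$ (Euler characteristic $t!$); your induction simply unpacks this one step at a time, each step contributing a $\mathbb{P}^{k-1}$ factor.

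Your explicit reliance on $\End(M)=\mathbb{C}$ is well-placed and in fact sharper than the paper: the paper's identification with the flag variety in $\mathbb{C}^{t}$ tacitly uses the same hypothesis, and the lemma as stated actually fails for decomposable exceptional $M$ (e.g.\ $M=S_{1}\oplus S_{2}$ with no arrows between the two vertices gives $\mbf1_{M}^{t}=(t!)^{2}\mbf1_{tM}$). In practice the paper only invokes the lemma for indecomposable $M$, where $\End(M)=\mathbb{C}$ holds by Happel--Ringel. Your worry about closedness is unnecessary in this situation: when $\End(M)=\mathbb{C}$, every nonzero element of $\Hom(tM,M)\simeq\mathbb{C}^{t}$ is already surjective with kernel isomorphic to $(t-1)M$, so $\mathcal{F}(M,(t-1)M;tM)$ is exactly $\mathbb{P}^{t-1}$, not merely an open subset of it.
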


\begin{proof}
Since $M$ has no self-extensions, we have by definition
$$\mbf1_{M}^{t}=\mbf1_{M}\mbf1_{M}\cdots\mbf1_{M}=\chi(\mathcal{F})\mbf1_{tM}$$
where $\mathcal{F}$ is the variety of all filtrations
$$tM=M_{0}\supset M_{1}\supset\cdots\supset M_{t}=0$$
with factors isomorphic to $M$.

It is easy to see that $\chi(\mathcal{F})$ is equal to the Euler
characteristic of the variety of complete flags in $\mathbb{C}^{t}$.
Hence $\chi(\mathcal{F})=t!$ and the lemma holds.
\end{proof}

\begin{lem}\label{lem 3 rep direct}
For any $M_{1},\cdots,M_{t}\in\rep(Q)$ such that $\Hom(M_{i},M_{j})=0$ and\\
$\Ext^{1}(M_{j},M_{i})=0$ for all $i>j$. Then we have
$$\mbf1_{M}=\mbf1_{M_{1}}\mbf1_{M_{2}}\cdots \mbf1_{M_{t}}$$
where $M=\oplus_{i=1}^{t}M_{i}$.
\end{lem}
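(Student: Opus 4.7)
My plan is to proceed by induction on $t$, reducing everything to the case $t=2$ where the filtration variety $\mathcal{F}(M_1,M_2;L)$ from subsection \ref{3 char function} can be analyzed directly.

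For the base case $t=2$, I apply the formula $\mbf1_{M_1}\mbf1_{M_2}(L)=\chi(\mathcal{F}(M_1,M_2;L))$. If the variety is nonempty then $L$ fits into a short exact sequence $0\to M_2\to L\to M_1\to 0$, and the hypothesis $\Ext^{1}(M_1,M_2)=0$ forces this sequence to split, so $L\simeq M_1\oplus M_2$. Thus both sides of the desired identity vanish unless $L\simeq M_1\oplus M_2$. When $L=M_1\oplus M_2$, the hypothesis $\Hom(M_2,M_1)=0$ yields $\Hom(M_2,L)=\End(M_2)$, so every injective homomorphism $M_2\hookrightarrow L$ has image equal to the canonical summand $0\oplus M_2$. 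Consequently $\mathcal{F}(M_1,M_2;L)$ is a single point, its Euler characteristic is $1$, and this matches $\mbf1_{M_1\oplus M_2}(L)=1$.

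For the inductive step, set $N=M_1\oplus\cdots\oplus M_{t-1}$. The modules $M_1,\dots,M_{t-1}$ still satisfy the assumptions, so by induction $\mbf1_{M_1}\cdots\mbf1_{M_{t-1}}=\mbf1_N$. Using associativity of the Hall multiplication, I write $\mbf1_{M_1}\cdots\mbf1_{M_t}=\mbf1_N\,\mbf1_{M_t}$, and apply the $t=2$ case to the pair $(N,M_t)$: both $\Hom(M_t,N)=\bigoplus_{j<t}\Hom(M_t,M_j)=0$ and $\Ext^{1}(N,M_t)=\bigoplus_{j<t}\Ext^{1}(M_j,M_t)=0$ follow from the original hypotheses, so $\mbf1_N\,\mbf1_{M_t}=\mbf1_{N\oplus M_t}=\mbf1_M$.

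The only nontrivial point is the base case, and specifically the claim that $\mathcal{F}(M_1,M_2;M_1\oplus M_2)$ is a single point. One must remember that this variety parametrizes submodules (not morphisms), so the $\Hom$-vanishing is what collapses the $\Aut(M_2)$-orbit of injections to a single image. Everything else is bookkeeping with the associative multiplication.
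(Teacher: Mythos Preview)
Your proof is correct and follows essentially the same approach as the paper: reduce to $t=2$ by induction, then use $\Ext^{1}(M_{1},M_{2})=0$ to force the support to be $M_{1}\oplus M_{2}$ and $\Hom(M_{2},M_{1})=0$ to see that the filtration variety is a single point. You are simply more explicit about the inductive step and the verification that $(N,M_{t})$ again satisfies the hypotheses, which the paper leaves to the reader.
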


\begin{proof}
It is sufficient to prove the case $t=2$. The general case follows
by induction.

So let $\Hom(M_{2},M_{1})=0$ and $\Ext^{1}(M_{1},M_{2})=0$, we need
to prove $\mbf1_{M_{1}\oplus M_{2}}=\mbf1_{M_{1}}\mbf1_{M_{2}}$.
Since $\Ext^{1}(M_{1},M_{2})=0$, we have
$$\mbf1_{M_{1}}\mbf1_{M_{2}}=\chi(\mathcal{G})\mbf1_{M_{1}\oplus M_{2}}$$
where
$$\mathcal{G}=\{N\subset M_{1}\oplus M_{2}|N\simeq M_{2}\ \text{and}\ M_{1}\oplus M_{2}/N\simeq M_{1}\}.$$
As $\Hom(M_{2},M_{1})=0$, we know that $\mathcal{G}$ is a single
point and hence $\chi(\mathcal{G})=1$.
\end{proof}

\section{Representation of tame quivers}\label{sec 4}
In this section, we give a brief review of the representation theory
of tame quivers. And in the first three subsections the results are
valid for any quiver without oriented cycles, not only tame. For
details one can see \cite{dr}, for example.

\subsection{The classification of indecomposable modules}\label{4 classify indec mod}
Let $Q$ be a quiver without oriented cycles (not necessarily be
tame). Denote by $\ind(Q)$ the set of isomorphic classes of
indecomposable modules. For $M\in\rep(Q)$, denote its isomorphic
class by $[M]$.

The objects in $\ind(Q)$ can be classified as follows: $M\in\ind(Q)$
is called \textit{preprojective} (resp. \textit{preinjective}) if
there exists a positive integer $n$ such that $\tau^{n}M=0$ (resp.
$\tau^{-n}M=0$) where $\tau$ denotes the Auslander-Reiten
translation. And $M$ is called \textit{regular} if for any
$n\in\mathbb{Z}$, $\tau^{n}M\neq 0$. We say a decomposable module is
preprojective, preinjective or regular if each indecomposable
summand is.

Let $\prep(Q)$, $\prei(Q)$ and $\reg(Q)$ denote the full subcategory
of $\rep(Q)$ consisting of preprojective, preinjective and regular
modules respectively. These three subcategories are
extension-closed. Moreover, we have the following results:

\begin{prop}\label{prop 4 repdirect}
For any $P\in\prep(Q)$,$R\in\reg(Q)$ and $I\in\prei(Q)$, we have
\begin{gather*}
\Hom(I,P)=\Ext^{1}(P,I)=0;\\
\Hom(I,R)=\Ext^{1}(R,I)=0;\\
\Hom(R,P)=\Ext^{1}(P,R)=0.
\end{gather*}
\end{prop}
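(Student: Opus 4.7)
The plan is to reduce to indecomposable modules and then exploit the Auslander-Reiten translate $\tau$ together with the AR-duality formula available because $\mathbb{C}Q$ is hereditary (as $Q$ has no oriented cycles).

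First I would reduce to the indecomposable case. Since $\prep(Q)$, $\prei(Q)$, and $\reg(Q)$ are each closed under finite direct sums and summands and both $\Hom$ and $\Ext^{1}$ distribute over finite direct sums, it suffices to establish the six vanishings when $P$, $R$, $I$ are indecomposable of the respective types. Next I would collect the ingredients from AR theory: (a) $\tau$ and $\tau^{-1}$ preserve each of $\prep(Q)$, $\reg(Q)$, $\prei(Q)$ at the level of indecomposables, with $\tau P=0$ only when $P$ is projective and $\tau^{-1}I=0$ only when $I$ is injective; (b) every indecomposable preprojective has the form $\tau^{-k}P_{0}$ for some $k\ge 0$ and some indecomposable projective $P_{0}$, and dually for preinjectives; (c) the Auslander-Reiten duality $\Ext^{1}(X,Y)\cong D\,\overline{\Hom}(Y,\tau X)$, where $\overline{\Hom}$ denotes morphisms modulo those factoring through an injective.

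For the three $\Hom$ vanishings I would use $\tau$-translation to push everything onto the projective ``boundary''. For $\Hom(I,P)$, write $P=\tau^{-k}P_{0}$ and pass to the stable category modulo projectives, where iterated $\tau$ is an equivalence: this reduces the question to $\Hom(\tau^{k}I,P_{0})$, in which the source is either $0$ or still an indecomposable preinjective (in particular, not projective). One then invokes the fact that over a hereditary algebra, any nonzero map from an indecomposable to an indecomposable projective forces the source to be preprojective (the image is a submodule of the projective, hence projective, and a short inductive argument in the AR-quiver places the source in the preprojective component). The vanishings $\Hom(R,P)=0$ and $\Hom(I,R)=0$ follow by the same reduction, the latter using the dual statement (any nonzero map from an indecomposable injective goes to a preinjective).

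The three $\Ext^{1}$ vanishings then fall out of AR duality and fact (a): $\Ext^{1}(P,I)\cong D\,\overline{\Hom}(I,\tau P)$, $\Ext^{1}(P,R)\cong D\,\overline{\Hom}(R,\tau P)$, and $\Ext^{1}(R,I)\cong D\,\overline{\Hom}(I,\tau R)$; in each case the shifted second argument lies in $\prep(Q)\cup\{0\}$ or in $\reg(Q)$, so the previously established $\Hom$ vanishings apply. The main obstacle is the load-bearing AR-theoretic claim that a nonzero morphism from a non-preprojective indecomposable to an indecomposable projective cannot exist (and its dual); this is standard and can be extracted from \cite{dr}, but all the real content of the proposition sits there. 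Once that black box is in hand, the proof is a routine chase through $\tau$-translation and AR duality.
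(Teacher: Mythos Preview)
The paper does not actually prove this proposition. Section~4 is explicitly a review of known representation-theoretic facts (``for the details one can see \cite{dr}''), and Proposition~\ref{prop 4 repdirect} is simply stated without argument. So there is no proof in the paper to compare against.

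Your outline is the standard route and is correct in outline. One simplification worth noting: over a hereditary algebra, the image of any nonzero map from an indecomposable $M$ into an indecomposable projective $P_{0}$ is a nonzero submodule of $P_{0}$, hence itself projective; the surjection $M\twoheadrightarrow\mathrm{im}$ then splits, and since $M$ is indecomposable this forces $M\cong\mathrm{im}$ to be projective. Thus no ``short inductive argument in the AR-quiver'' is needed for that step --- a non-projective indecomposable admits no nonzero map to a projective, full stop. With this (and its dual for injectives) in hand, the $\Hom$ vanishings follow from your $\tau$-shift reduction, and the $\Ext^{1}$ vanishings follow from the AR formula exactly as you say.
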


Roughly speaking, the \textit{Auslander-Reiten-quiver} (AR-quiver,
for short) of $\rep(Q)$ is the quiver whose vertices are objects in
$\ind(Q)$ and arrows are the \textit{irreducible morphisms} between
modules. We denote it by $\Gamma_{Q}$. Thus in the language of
Auslander-Reiten theory, $\Gamma_{Q}$ can be divided into three
components, called the preprojective, regular and preinjective
components respectively.

We remark that if $Q$ is of finite type, there is no regular
component in $\Gamma_{Q}$ and the preprojective component coincides
with the preinjective component.

\subsection{Root system and indecomposable modules}\label{4 root system and indec mod}
We denote by $\Delta$ the \textit{root system} of $\mathfrak{g}(Q)$.
For each $i\in I$, we identified it with the simple root
$\alpha_{i}$. Thus the positive root lattice can be identified with
$\mathbb{N}[I]$. And the set of \textit{positive roots} is
$\Delta_{+}=\Delta\cap\mathbb{N}[I]$.

The set of \textit{real roots} and \textit{imaginary roots} are
denoted by $\Delta^{re}$ and $\Delta^{im}$ respectively. Set
$\Delta_{+}^{re}=\Delta^{re}\cap\Delta_{+}$,
$\Delta_{+}^{im}=\Delta^{im}\cap\Delta_{+}$.

The following theorem is due to Kac, which is a generalization of
Gabriel's theorem.
\begin{thm}\label{thm 4 Kac}
(1). For any $[M]\in\ind(Q)$, $\dimv M\in\Delta_{+}$.

(2). For any $\alpha\in\Delta_{+}^{re}$, there is a unique
$[M]\in\ind(Q)$ with $\dimv M=\alpha$.

(3). For any $\alpha\in\Delta_{+}^{im}$, there are infinitely many
$[M]\in\ind(Q)$ with $\dimv M=\alpha$.
\end{thm}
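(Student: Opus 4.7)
The plan is to use the explicit Auslander-Reiten structure for tame quivers together with Bernstein-Gelfand-Ponomarev reflection functors; Kac's general argument via counting absolutely indecomposable representations over finite fields would also work, but the tame restriction of this paper allows a more representation-theoretic treatment aligned with the rest of the exposition.

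For part (1), I would handle the three AR-components separately. Preprojective indecomposables are exactly $\tau^{-n}P_{i}$ for $n\geq 0$ and $i\in I$; on dimension vectors $\tau^{-1}$ acts as the inverse Coxeter transformation, which lies in the Weyl group, so these dimension vectors are Weyl translates of dimension vectors of projectives. Since the latter are positive real roots, and a Weyl transform of a positive real root is again a real root (necessarily positive here, since we remain in $\mathbb{N}[I]$), the conclusion follows. The preinjective case is dual. For the regular component, each indecomposable lies in a tube of some period $r\geq 1$; standard tube combinatorics give that its dimension vector is either a positive real root (for quasi-lengths not divisible by $r$ in a non-homogeneous tube) or a positive integer multiple of the minimal imaginary root $\delta$, both of which lie in $\Delta_{+}$.

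For part (2), given $\alpha\in\Delta_{+}^{re}$, I would first find a sequence of sink/source reflections whose composition sends $\alpha$ to a simple root $\alpha_{j}$. The corresponding composition of BGP reflection functors provides an equivalence between suitable full subcategories of $\rep(Q)$ and $\rep(Q')$, sending any indecomposable of dimension vector $\alpha$ to one of dimension vector $\alpha_{j}$, where the only candidate is $S_{j}$. Inverting the functors then yields both existence and uniqueness. The case where $\alpha$ is a regular real root inside a non-homogeneous tube requires separate treatment: one uses the fact that in a tube of period $r$ the $r$ quasi-simple modules have dimension vectors summing to $\delta$ and form a $\tau$-orbit, and that a regular indecomposable is uniquely determined up to isomorphism by its quasi-socle and quasi-length.

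For part (3), the positive imaginary roots are exactly $n\delta$ with $n\geq 1$. Since for tame quivers there are infinitely many homogeneous tubes (generically parametrized by a cofinite subset of $\mathbb{P}^{1}$), and each contains a unique indecomposable of quasi-length $n$ with dimension vector $n\delta$, one obtains an infinite family per imaginary root. The main obstacle I expect is the bookkeeping in part (2): verifying that the real roots realized by preprojectives, preinjectives, and the quasi-simples inside the various non-homogeneous tubes together exhaust $\Delta_{+}^{re}$ with no repetitions requires a case-by-case check using the period data across $A_{n}^{(1)}$, $D_{n}^{(1)}$, $E_{6}^{(1)}$, $E_{7}^{(1)}$, $E_{8}^{(1)}$, and ultimately rests on the non-trivial identity between tube ranks and the structure of the affine root system.
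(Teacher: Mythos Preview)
The paper does not prove this theorem. It is stated as a known result ``due to Kac, which is a generalization of Gabriel's theorem'' and used as background; no argument is given. So there is no approach in the paper to compare against.

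That said, your proposal does not match the generality claimed in the paper. The authors explicitly note that the results of the first three subsections of Section~\ref{sec 4}, including this theorem, ``are valid for any quiver without oriented cycles, not only tame.'' Your argument, by contrast, relies throughout on the tame structure: the trichotomy into preprojective, preinjective, and regular components with the regular part a union of tubes, the $\mathbb{P}^{1}$-family of homogeneous tubes for part (3), and the tube combinatorics in parts (1) and (2). For a wild quiver none of this is available, and one genuinely needs Kac's original method (counting absolutely indecomposable representations over $\mathbb{F}_{q}$ and showing the count is a polynomial in $q$ whose support detects roots). You mention this alternative at the outset but then discard it; for the statement as written it is not optional.

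Even restricted to the tame case, your part (2) has the gap you yourself flag: you have uniqueness of a regular indecomposable within a fixed tube, but you have not argued that a given real root cannot be realized simultaneously by modules in two different tubes, or by both a regular module and a preprojective or preinjective one. Your proposed remedy, a type-by-type check across $A_{n}^{(1)}$ through $E_{8}^{(1)}$, would work but is exactly the sort of verification one would normally outsource to the literature (e.g.\ Dlab--Ringel). At that point you are effectively citing the result rather than proving it, which is what the paper does.
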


For any $\alpha\in\Delta_{+}^{re}$, we denote the unique (up to
isomorphism) indecomposable module with dimension vector $\alpha$ by
$M(\alpha)$.

Denote by $\Delta_{+}^{prep}$ (resp. $\Delta_{+}^{prei}$,
$\Delta_{+}^{reg}$) the set of all positive roots which are
dimension vectors of preprojective (resp. preinjective, regular)
modules. It is known that
$\Delta_{+}^{prep}\cup\Delta_{+}^{prei}\subset \Delta_{+}^{re}$,
$\Delta_{+}^{im}\subset \Delta_{+}^{reg}$ but in general they are
not equal.

\subsection{The preprojective and preinjective modules}\label{4 preproj and preinj}
To describe $\ind(Q)$, we need to describe the preprojective,
preinjective and the regular component respectively. The case of the
preprojective or preinjective is easy. In fact it is similar to the
case of finite type. We just recall the following
\textit{representation-directed} property:

\begin{lem}\label{lem 4 rep direct}
(1). $\Delta_{+}^{prep}$ can be totally ordered as
$$\Delta_{+}^{prep}=\{\alpha_{1}\prec\alpha_{2}\prec\cdots\prec\alpha_{m}\prec\cdots\}$$
such that
$$\Hom(M(\alpha_{i}),M(\alpha_{j}))=0,\ \text{for all}\ i>j;$$
$$\Ext^{1}(M(\alpha_{i}),M(\alpha_{j}))=0, \ \text{for all}\ i\leq
j.$$

(2). $\Delta_{+}^{prei}$ can be totally ordered as
$$\Delta_{+}^{prei}=\{\cdots\prec\beta_{n}\prec\cdots\prec\beta_{2}\prec\beta_{1}\}$$
such that
$$\Hom(M(\beta_{i}),M(\beta_{j}))=0,\ \text{for all}\ i<j;$$
$$\Ext^{1}(M(\beta_{i}),M(\beta_{j}))=0, \ \text{for all}\ i\geq j.$$
\end{lem}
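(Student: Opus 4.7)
The plan is to exploit the well-known directed structure of the preprojective and preinjective components of the Auslander-Reiten quiver $\Gamma_{Q}$. Since $Q$ has no oriented cycles, $\mathbb{C}Q$ is hereditary and the preprojective component is isomorphic (as a translation quiver) to $\mathbb{N}Q^{op}$; in particular every preprojective indecomposable is uniquely of the form $\tau^{-n}P_{i}$ for some $(n,i)\in\mathbb{N}\times I$, where $P_{i}$ is the indecomposable projective at vertex $i$. Dually, every preinjective indecomposable has the form $\tau^{n}I_{i}$.

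For part (1), I would first fix a total order $<$ on the finite set $I$ which is a linear extension of the partial order coming from $Q$ (so that $\Hom(P_{j},P_{i})=0$ whenever $i<j$); such an extension exists because $Q$ is acyclic. Then define the total order on $\Delta_{+}^{prep}$ lexicographically: $\tau^{-n}P_{i}\prec\tau^{-m}P_{j}$ iff $n<m$, or $n=m$ and $i<j$. This produces the enumeration $\alpha_{1}\prec\alpha_{2}\prec\cdots$.

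To verify the Hom vanishing condition, the key move is to iterate the AR-translation equivalence on the subcategory of non-projective indecomposables to reduce $\Hom(\tau^{-m}P_{j},\tau^{-n}P_{i})$ to $\Hom(P_{j},\tau^{m-n}P_{i})$. When $m>n$ this vanishes because $\tau^{k}P_{i}=0$ for $k\geq 1$; when $m=n$ with $j>i$, it vanishes by the choice of order on $I$. For the Ext condition I would apply the Auslander-Reiten duality $\Ext^{1}(M,N)\cong D\overline{\Hom}(N,\tau M)$: if $M=\tau^{-n}P_{i}$ with $n=0$ then $M$ is projective so $\Ext^{1}(M,N)=0$ automatically, while for $n\geq 1$ the same Hom reduction applied to $\overline{\Hom}(\tau^{-m}P_{j},\tau^{-n+1}P_{i})$ gives vanishing precisely when $m\geq n$, i.e.\ exactly when $i\leq j$.

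Part (2) is then obtained by formal duality: the standard $k$-duality functor $D=\Hom_{\mathbb{C}}(-,\mathbb{C})$ identifies $\rep(Q)$ with $\rep(Q^{op})$ and exchanges preprojectives with preinjectives, so transporting the argument through this duality (and reversing the enumeration so that $\beta_{1}$ corresponds to the injectives) yields the required chain $\cdots\prec\beta_{2}\prec\beta_{1}$ with the dual Hom/Ext vanishings. The main obstacle throughout is the bookkeeping around the boundary cases where $\tau$ or $\tau^{-1}$ annihilates a module during the iterative reductions; but in each such case the module involved is already projective (or injective), so the required Ext vanishes trivially and the argument goes through without further work.
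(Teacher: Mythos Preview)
The paper does not prove this lemma; it is simply recalled as the well-known ``representation-directed property'' of the preprojective and preinjective components, with the general reference for Section~\ref{sec 4} being \cite{dr}. So there is nothing to compare against --- you were supplying an argument the authors chose to omit.

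Your approach is the standard one and is essentially correct. One point deserves tightening. The reduction of $\Hom(\tau^{-m}P_{j},\tau^{-n}P_{i})$ to $\Hom(P_{j},\tau^{m-n}P_{i})$ via $m$ applications of $\tau$ is not valid inside the module category: the isomorphism $\Hom(M,N)\cong\Hom(\tau M,\tau N)$ that you are iterating requires both $M$ and $N$ to lie in the subcategory of modules without projective summands, and after $n$ steps the second argument has become the projective $P_{i}$. You can therefore only reduce to $\Hom(\tau^{-(m-n)}P_{j},P_{i})$. This space is still zero when $m>n$, but not because ``$\tau^{k}P_{i}=0$''; rather, for a hereditary algebra any nonzero morphism from an indecomposable module $M$ to a projective has projective image (submodules of projectives are projective), hence splits, forcing $M$ itself to be projective --- which $\tau^{-(m-n)}P_{j}$ is not. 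Alternatively, run the whole iteration in $D^{b}(\mathbb{C}Q)$, where $\tau$ is a genuine auto-equivalence and $\tau^{m-n}P_{i}$ sits in strictly negative cohomological degree for $m>n$, so the derived $\Hom$ vanishes for degree reasons. Your closing remark about boundary cases handles the $\Ext$ side (projective first argument) but not this $\Hom$ side; with the fix above, the rest of the argument and the dualisation to preinjectives go through.
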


\subsection{The Jordan quiver}\label{4 Jordan quiver}
The description of the regular component is more complicated. We
need some preparations in this and the next subsection.

Let $C_{1}$ be the quiver with only one vertex and a loop arrow.
This is the so-called \textit{Jordan quiver}. Now a module in
$\rep_{0}(C_{1})$ is just a pair $(V,x)$ where $V$ is a
$\mathbb{C}$-space and $x$ is a nilpotent linear transformation on
$V$.

The simple module is denoted by $S$. Any indecomposable module in
$\rep_{0}(C_{1})$ with dimension $n$ is isomorphic to
$S[n]=(\mathbb{C}^{n},J_{n})$, where $J_{n}$ is the $n\times n$
Jordan block with 0's in the diagonal. And $\tau S[n]=S[n]$ for any
$n$. The AR-quiver of $\rep_{0}(C_{1})$ is called a
\textit{homogeneous tube}.

\subsection{The cyclic quiver}\label{4 cyclic quiver}
In this subsection we fix $r\in\mathbb{N},r\geq 2$. Let
$C_{r}=(I,\Omega,s,t)$ be a \textit{cyclic quiver} with $r$
vertices, i.e. $I=\mathbb{Z}/r\mathbb{Z}=\{1,2,\cdots,r\}$,
$\Omega=\{\rho_{i}|1\leq i\leq n\}$ where $s(\rho_{i})=i$,
$t(\rho_{i})=i+1$ for all $i$. Note that underline graph of this
quiver is of type $A_{r-1}^{(1)}$, but it has an oriented cycle. So
we consider the category $\rep_{0}(C_{r})$.

For any $i\in I$ and $l\geq 1$, there is a unique (up to
isomorphism) indecomposable module in $\rep_{0}(C_{r})$ with top
$S_{i}$ and length $l$, denoted by $S_{i}[l]$. And it is known that
the set of isomorphic classes of indecomposable modules in
$\rep_{0}(C_{r})$ is just $\{S_{i}[l]|1\leq i\leq r, l\geq 1\}$.
Moreover we have $\tau S_{i}[l]=S_{i+1}[l]$. hence
$\tau^{r}S_{i}[l]=S_{i}[l]$ for all $i,l$. For this reason, the
AR-quiver $\Gamma_{C_{r}}$ is called a \textit{non-homogeneous tube}
or more precisely, a \textit{tube of rank} $r$ (So a homogeneous
tube actually means a tube of rank 1).

Let $\delta=(1,1,\cdots,1)$ be the minimal imaginary root. We know
that $\dimv S_{i}[nr]=n\delta$ for any $i\in I$ and $n\geq 1$. And
$\dimv S_{i}[l]\in\Delta_{+}^{re}$ for any $i\in I$ and $r\nmid l$.

Denote by $\mathcal{I}(C_{r})$ the set of isomorphic classes of all
modules in $\rep_{0}(C_{r})$. Let $\Pi$ be the set of $r$-tuples of
partitions $\pi=(\pi^{(1)},\pi^{(2)},\cdots,\pi^{(r)})$ with each
components $\pi^{(i)}=(\pi^{(i)}_{1}\geq\pi^{(i)}_{2}\geq\cdots)$
being a partition of a positive integer. For each $\pi\in\Pi$, we
have a module
$$M(\pi)=\bigoplus_{i\in I,j\geq 1}S_{i}[\pi_{j}^{(i)}].$$
In this way we obtain a bijection between $\Pi$ and
$\mathcal{I}(C_{r})$.

\subsection{Tame quivers and the regular modules}\label{4 tame quiver}
In this subsection let $Q=(I,\Omega,s,t)$ be a tame quiver without
oriented cycles. Now we can describe the subcategory $\reg(Q)$. The
following lemma shows that the regular component of $\Gamma_{Q}$ is
a collection of tubes indexed by
$\mathbb{P}^{1}=\mathbb{P}^{1}(\mathbb{C})$. Moreover, there are
only finite many non-homogeneous tubes and all the others are
homogeneous.

\begin{lem}\label{lem 4 regular component}
$$\reg(Q)\simeq(\coprod_{j=1}^{s}\mathcal{T}_{j})\coprod(\coprod_{x\in\mathbb{P}^{1}\setminus J}\mathcal{T}_{x})$$
as coproduct of abelian categories, where $J$ is a subset of
$\mathbb{P}^{1}$ containing $s$ elements, each $\mathcal{T}_{x}$ is
isomorphic to $\rep_{0}(C_{1})$ and each $\mathcal{T}_{i}$ is
isomorphic to $\rep_{0}(C_{r_{i}})$ for some $r_{i}>1$.
\end{lem}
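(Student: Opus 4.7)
The plan is to combine the defect function on $\mathbb{Z}[I]$ with Auslander-Reiten theory, and then to parametrize the regular simples by $\mathbb{P}^{1}$.

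First I would introduce the defect $\partial:\mathbb{Z}[I]\to\mathbb{Z}$ by $\partial(\alpha)=\langle\delta,\alpha\rangle=-\langle\alpha,\delta\rangle$, where $\delta$ is the minimal positive imaginary root, which spans the radical of the symmetric Euler form precisely because $Q$ is tame. Using Lemma \ref{lem 4 rep direct} together with a direct inspection of the Coxeter action on the preprojective and preinjective components, one checks $\partial(M)<0$ for preprojective $M$ and $\partial(M)>0$ for preinjective $M$. By additivity of $\partial$ along short exact sequences and by Proposition \ref{prop 4 repdirect}, an indecomposable module is regular if and only if $\partial(M)=0$.

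Next I would analyze the Auslander-Reiten structure of $\reg(Q)$. Because $\partial$ vanishes on $\dimv M-\dimv\tau M$ for every regular $M$ (while the difference has strictly negative defect for preprojectives), one deduces $\dimv\tau M=\dimv M$ for every regular indecomposable $M$. Since only finitely many isomorphism classes of indecomposables share a given real-root dimension vector, every $\tau$-orbit of regular indecomposables is finite. The standard shape theorem for AR-components of hereditary algebras then forces each connected component of $\reg(Q)$ to be a tube of the form $\mathbb{Z}A_{\infty}/(r)$. An equivalence $\mathcal{T}\simeq\rep_{0}(C_{r})$ follows by sending the $r$ quasi-simples of the tube to the simples $S_{1},\dots,S_{r}$ of the cyclic quiver and using extension-closure together with the known structure of the additive closure of a tube.

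The main obstacle is parametrizing these tubes by $\mathbb{P}^{1}$ and bounding the number of non-homogeneous ones. Here I would first use $\Hom$- and $\Ext^{1}$-orthogonality between distinct tubes, which follows from the $\tau$-periodicity and the defect computation, to split $\reg(Q)$ as a coproduct of the individual tubes. The parametrization itself I would establish by a case-by-case reduction: for each of the types $A_{n}^{(1)}, D_{n}^{(1)}, E_{6}^{(1)}, E_{7}^{(1)}, E_{8}^{(1)}$ one exhibits a tilting or covering construction that transports the subcategory of homogeneous regulars to that of the Kronecker quiver, where the family of simple regulars is explicitly indexed by lines in $\mathbb{C}^{2}$, i.e.\ by $\mathbb{P}^{1}$. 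The finiteness of the exceptional set $J$ and the specific ranks $r_{i}$ are then forced by the numerical identity $\sum_{i=1}^{s}(r_{i}-1)=|I|-2$ read off from the extended Dynkin data. This is the step I would most likely defer to \cite{dr}, since the paper has already announced that the details of Section \ref{sec 4} are drawn from there.
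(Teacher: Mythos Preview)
The paper does not prove this lemma; it is stated as a classical structure theorem with a blanket reference to \cite{dr} at the beginning of Section~\ref{sec 4}. So there is no argument in the paper to compare yours against---any correct sketch would be acceptable here.

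Your outline follows the standard route (defect, tube shape via AR-theory, $\mathbb{P}^{1}$-parametrization), but one step is wrong as written. From $\partial(\dimv M-\dimv\tau M)=0$ you conclude $\dimv\tau M=\dimv M$ for every regular indecomposable $M$. The defect is a single linear functional with large kernel, so its vanishing on the difference says nothing about the difference itself, and in fact the conclusion is false in every non-homogeneous tube. For instance in type $D_{4}^{(1)}$ each rank-$2$ tube has quasi-simples $E_{1},E_{2}$ with $\tau E_{1}\simeq E_{2}$ and $\dimv E_{1}+\dimv E_{2}=\delta$; since $\delta$ has odd entries, $\dimv E_{1}\neq\dimv E_{2}$. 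Your next sentence---deducing finiteness of the $\tau$-orbit from uniqueness of the indecomposable at a fixed real root---then also collapses, because it rested on this equality.

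The standard repair is numerical but different: one shows that the Coxeter transformation $c$ (which realizes $\dimv\tau M$ on non-projectives) preserves the defect-zero sublattice and has finite order on the quotient $\{\alpha:\partial(\alpha)=0\}/\mathbb{Z}\delta$, which identifies with the root lattice of the underlying finite-type diagram. Combined with Theorem~\ref{thm 4 Kac} this yields $\tau$-periodicity on regular indecomposables. Once that step is fixed, the remainder of your sketch---the shape theorem for the components, $\Hom/\Ext^{1}$-orthogonality between tubes via the AR-formula, and the deferral of the $\mathbb{P}^{1}$-parametrization and rank count to \cite{dr}---is sound and matches the literature treatment the paper is citing.
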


According to the lemma, For each $x\in\mathbb{P}^{1}$, let
$F_{x}:\rep_{0}(C_{1})\xrightarrow{\sim}\mathcal{T}_{x}$,
$F_{j}:\rep_{0}(C_{r_{j}})\xrightarrow{\sim}\mathcal{T}_{j}$ be the
isomorphic functors respectively, for each $x\in\mathbb{P}^{1}-J$
and $1\leq j\leq s$. For any $x$ and $l\geq 1$, set
$S_{x,l}=F_{x}(S[l])$ (see \ref{4 Jordan quiver}). For any $1\leq
j\leq s$, $1\leq i\leq r_{i}$ and $l\geq 1$, set
$S_{j,i,l}=F_{j}(S_{i}[l])$ (see \ref{4 cyclic quiver}).

The modules $S_{x,1}$ and $S_{j,i,1}$ are called
\textit{quasi-simple} for any $x$, $j$ and $i$. The module $S_{x,l}$
(resp. $S_{j,i,l}$) is the unique (up to isomorphism) module with
\textit{quasi-top} $S_{x,1}$ (resp. $S_{j,i,1}$) and
\textit{quasi-length} $l$. Here the word quasi- means with respect
to the subcategory $\mathcal{T}_{x}$ or $\mathcal{T}_{i}$. Note that
for all $x$ and $i$, $\mathcal{T}_{x}$ and $\mathcal{T}_{i}$ are
extension-closed abelian subcategories of $\rep(Q)$.

We know that for an affine Cartan datum,
$\Delta_{+}^{im}=(\mathbb{N}-\{0\})\delta$ where $\delta$ is the
minimal imaginary root. The following lemma describes all
indecomposable modules with dimension vector $n\delta$ and also
tells us the difference between $\Delta_{+}^{im}$ and
$\Delta_{+}^{reg}$.

\begin{lem}\label{lem 4 im root and reg mod}
(1). For any $n\geq 1$, the set of isomorphic classes of
indecomposable modules with dimension vector $n\delta$ is the
following
$$\{[S_{x,n}]|x\in\mathbb{P}^{1}\setminus J\}\cup\{[S_{j,i,nr_{j}}]|1\leq j\leq s, 1\leq i\leq r_{j}\}$$

(2). $\dimv S_{j,i,l}\in\Delta_{+}^{re}$ for any $1\leq j\leq s$,
$1\leq i\leq r_{j}$ and $r_{j}\nmid l$.
\end{lem}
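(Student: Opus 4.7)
The plan is to reduce everything to dimension-vector bookkeeping inside the individual tubes of the regular component, using Lemma \ref{lem 4 regular component} and Kac's theorem (Theorem \ref{thm 4 Kac}).

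First I would argue that any indecomposable $M$ with $\dimv M = n\delta$ must be regular. Since $n\delta \in \Delta_{+}^{im}$ whereas $\Delta_{+}^{prep} \cup \Delta_{+}^{prei} \subset \Delta_{+}^{re}$, the module $M$ is neither preprojective nor preinjective, so by the trichotomy of Section \ref{4 classify indec mod} it must lie in a single tube---either some $\mathcal{T}_{x}$ with $x \in \mathbb{P}^{1} \setminus J$ or some $\mathcal{T}_{j}$ with $1 \leq j \leq s$.

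The main input, which is standard for tame quivers and which I would take from \cite{dr}, is that the quasi-simple of a homogeneous tube has dimension vector $\delta$, while in a non-homogeneous tube $\mathcal{T}_{j}$ the dimension vectors of the $r_{j}$ quasi-simples are positive roots whose sum equals $\delta$. Reading off quasi-composition series via $F_{x}$ and $F_{j}$, one obtains $\dimv S_{x,l} = l\delta$, and $\dimv S_{j,i,l}$ equals the sum of $l$ consecutive quasi-simple dimension vectors of $\mathcal{T}_{j}$ starting at position $i$. Thus $\dimv S_{x,l} = n\delta$ iff $l = n$, and $\dimv S_{j,i,l} = n\delta$ iff $l = nr_{j}$, which together yield the enumeration in part (1).

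For part (2), write $l = qr_{j} + s$ with $0 < s < r_{j}$, so that $\dimv S_{j,i,l} = q\delta + \sigma$, where $\sigma$ is a sum of $s$ consecutive quasi-simple dimension vectors of $\mathcal{T}_{j}$. By Kac's theorem $\dimv S_{j,i,l} \in \Delta_{+}$, and since $\Delta_{+}^{im} = (\mathbb{N} - \{0\})\delta$ it suffices to check that $\sigma$ is not an integer multiple of $\delta$. If instead $\sigma = m\delta$, then the complementary partial sum of the remaining $r_{j} - s$ quasi-simples would equal $(1-m)\delta$; but both $\sigma$ and its complement are sums of at least one nonzero positive root, hence are strictly positive in $\mathbb{N}[I]$, which forces $m \geq 1$ and $1 - m \geq 1$ simultaneously, a contradiction. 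Therefore $\dimv S_{j,i,l}$ is not a positive multiple of $\delta$, so it is a real root. The only genuine obstacle here is the initial invocation of the quasi-simple dimension-vector data; everything else is organizational, and I would cite \cite{dr} for those structural facts rather than reprove them.
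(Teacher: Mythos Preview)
Your argument is correct. Note, however, that the paper does not actually prove this lemma: it is stated in Section~\ref{sec 4} as part of the background review of tame quiver representation theory, with the blanket reference to \cite{dr} given at the start of that section. So there is no paper proof to compare against; you have supplied the details that the paper elects to cite. Your reduction to the structural fact that the quasi-simple dimension vectors in each tube of rank $r_j$ sum to $\delta$ (and that the unique quasi-simple in a homogeneous tube has dimension vector $\delta$) is exactly the right input, and your positivity argument for part (2) is clean. One small remark: in part (2) you might also want to note that $\sigma$ being a positive integer multiple of $\delta$ is the only way $q\delta + \sigma$ can lie in $\Delta_{+}^{im}$, since $\delta$ is primitive in $\mathbb{Z}[I]$; this is implicit in your phrasing but worth making explicit if you write it up.
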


In fact for each tame quiver $Q$, the number of non-homogeneous
tubes and the rank of each tube can be determined precisely. Anyway
we don't need the details in this paper. But we need to mention one
particular case: the Kronecker quiver $K$, which is the quiver with
two vertices and two arrows pointing from one vertex to the other.
In this case all tubes are homogeneous. Later (in section \ref{sec
6}) we will discuss the composition algebra $\mathcal{C}(K)$ in
details.

At the end of this subsection, we recall a well-known result (see
\cite{cb}, for example):

\begin{lem} We have: \label{lem 4 non-hom tubes and multi of im root}
$$1+\sum_{j=1}^{s}(r_{j}-1)=|I|-1$$
and this number  is equal to the multiplicity of any imaginary root.
\end{lem}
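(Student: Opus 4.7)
The plan is to handle the two assertions of the lemma separately: first the combinatorial identity $1+\sum_{j=1}^{s}(r_{j}-1)=|I|-1$, and then the identification of this common number with the multiplicity of any imaginary root.

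For the combinatorial identity, I would appeal to the Dlab-Ringel classification of the regular component recalled in \ref{4 tame quiver}. Since the underlying graph of $Q$ must be one of $A_{n}^{(1)},D_{n}^{(1)},E_{6}^{(1)},E_{7}^{(1)},E_{8}^{(1)}$, the list of non-homogeneous tube ranks $(r_{1},\ldots,r_{s})$ is fixed in each case by \cite{dr}. Concretely, for $A_{n}^{(1)}$ with $p$ and $q=|I|-p$ arrows in the two directions the relevant ranks are $p$ and $q$ (treating the rank-$1$ cases as homogeneous); for $D_{n}^{(1)}$ they are $(2,2,|I|-3)$; and for $E_{6}^{(1)},E_{7}^{(1)},E_{8}^{(1)}$ they are $(2,3,3),(2,3,4),(2,3,5)$ respectively. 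A direct computation in each case gives $\sum_{j}(r_{j}-1)=|I|-2$, establishing the identity.

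For the multiplicity assertion, I would invoke Kac's classical theorem on imaginary root multiplicities for symmetric affine Kac-Moody algebras: the multiplicity of any imaginary root $n\delta$ (for $n\geq 1$) equals the rank of the underlying finite-dimensional simple Lie algebra $\mathring{\mathfrak{g}}$. Since $\mathfrak{g}$ is the affinization of $\mathring{\mathfrak{g}}$, we have $\mathrm{rank}(\mathring{\mathfrak{g}})=|I|-1$, which combined with the first step completes the proof.

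The main obstacle to a really clean argument is that the verification above is type-by-type rather than conceptual. A more uniform route would be to compute the Euler characteristic of the set of indecomposable modules of dimension vector $\delta$ directly from Lemma \ref{lem 4 im root and reg mod}(1): it splits into the one-parameter family $\{S_{x,1}\mid x\in\mathbb{P}^{1}\setminus J\}$, contributing $\chi(\mathbb{P}^{1}\setminus J)=2-s$, together with the finitely many modules $S_{j,i,r_{j}}$, contributing $\sum_{j}r_{j}$ in total; this rearranges as $2-s+\sum_{j}r_{j}=1+\bigl(1+\sum_{j}(r_{j}-1)\bigr)$, which can then be matched against Kac's dimension-theoretic interpretation of $\mathrm{mult}(\delta)$. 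Either way, the statement is a classical fact in the theory of tame quivers and can legitimately be cited directly from \cite{cb}.
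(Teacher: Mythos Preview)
Your proof is correct, but note that the paper does not actually prove this lemma: it is stated as a well-known result and simply cited from \cite{cb}, with no argument given. So there is nothing to compare against on the paper's side.

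Your type-by-type verification is accurate (including the handling of the $A_{n}^{(1)}$ case where a rank-$1$ tube drops out of the sum as homogeneous), and the appeal to Kac's theorem that $\mathrm{mult}(n\delta)=\mathrm{rank}\,\mathring{\mathfrak{g}}=|I|-1$ in the affine case is the standard way to identify the multiplicity. The alternative Euler-characteristic computation you sketch is also a nice observation, though as stated it stops just short of being a self-contained proof: you would still need to invoke a result equating $\chi$ of the variety of indecomposables of dimension $\delta$ with $1+\mathrm{mult}(\delta)$, which is again a citation rather than an argument. Given that the paper itself treats the lemma as a black-box citation, your closing remark that one may simply cite \cite{cb} is in fact exactly what the authors do.
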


\section{Bases arising from preprojective and preinjective
components}\label{sec 5}

In this section we assume that $Q=(I,\Omega,s,t)$ is a quiver
without oriented cycles (not necessarily be tame). We will discuss
two subalgebras of the composition algebra $\mathcal{C}(Q)$.

\subsection{The subalgebra $\mathcal{C}(Q)^{prep}$ and
$\mathcal{C}(Q)^{prei}$}\label{5 subalg gen by prep and prei} Let
$\mathcal{C}(Q)^{prep}$ (resp. $\mathcal{C}(Q)^{prei}$) be the
subalgebra of $\mathcal{H}(Q)$ generated by $\mbf1_{P}$ for all
$P\in\prep(Q)$ (resp. $\mbf1_{I}$ for all $I\in\prei(Q)$). In this
section we will prove that $\mbf1_{P}$ and $\mbf1_{I}$ are actually
in $\mathcal{C}_{\mathbb{Z}}(Q)$. Thus $\mathcal{C}(Q)^{prep}$ and
$\mathcal{C}(Q)^{prei}$ are subalgebras of the composition algebra
$\mathcal{C}(Q)$. Moreover, we can define
$\mathcal{C}_{\mathbb{Z}}(Q)^{prep}$ (resp.
$\mathcal{C}_{\mathbb{Z}}(Q)^{prei}$) to be the
$\mathbb{Z}$-subalgebra of $\mathcal{C}_{\mathbb{Z}}(Q)$ generated
by $\mbf1_{P}$ for all $P\in\prep(Q)$ (resp. $\mbf1_{I}$ for all
$I\in\prei(Q)$). At the end of this section we will construct
$\mathbb{Z}$-bases of these two subalgebras.

\subsection{Reflection functors}\label{5 reflection functor}
To prove $\mbf1_{P}$ and $\mbf1_{I}$ lie in the composition algebra
we need to use some tools in representation theory.

Let $i$ be a \textit{sink} of $Q$, i.e. there are no arrows starting
from $i$. We define $\sigma_{i}Q$ to be the quiver obtained from $Q$
by reversing all the arrows connected to $i$. Following \cite{bgp}
we can define the \textit{reflection functors}:
$$\sigma_{i}^{+}:\rep(Q)\rightarrow \rep(\sigma_{i}Q)$$

The action of the functor $\sigma_{i}^{+}$ on objects is defined by
$\sigma_{i}^{+}(V,x)=(V',x')$ where
\begin{gather*}
V_{k}'=V_{k}\ \text{if} \ k\neq i,\\
V_{i}'=\ker(\oplus_{t(h)=i}x_{h}:\oplus_{t(h)=i}V_{s(h)}\rightarrow
V_{i}),\\
x_{h}'=x_{h}\ \text{if} \ t(h)\neq i,\\
x_{h}'\ \text{is the composition}
V_{i}'\rightarrow\oplus_{t(h')=i}V_{s(h')}\rightarrow V_{s(h)}\
\text{if} \ t(h)=i.
\end{gather*}

The action of $\sigma_{i}^{+}$ on morphisms is the natural one.

Let $\rep(Q)[i]$ be the subcategory of $\rep(Q)$ consisting of all
modules which do not have $S_{i}$ as direct summands. Note that
since $i$ is a sink, $S_{i}$ is a simple projective module. Hence
$\rep(Q)[i]$ is closed under extensions. Then we can define
$\mathcal{H}(Q)[i]$ to be the subalgebra of $\mathcal{H}(Q)$
generated by all constructible functions whose support are contained
in $\rep(Q)[i]$. The functor $\sigma_{i}^{+}$ induces an algebra
homomorphism

$$\sigma_{i}:\mathcal{H}(Q)[i]\rightarrow\mathcal{H}(\sigma_{i}Q)[i]$$
defined by

$$\sigma_{i}(\mbf1_{M})=\mbf1_{\sigma_{i}^{+}M},\ \text{for any}\ M\in\rep(Q)[i]$$

Let $\mathcal{C}(Q)[i]=\mathcal{H}(Q)[i]\cap\mathcal{C}(Q)$,
$\mathcal{C}_{\mathbb{Z}}(Q)[i]=\mathcal{H}(Q)[i]\cap\mathcal{C}_{\mathbb{Z}}(Q)$.
Note that $\mathcal{C}(Q)$ and $\mathcal{C}(\sigma_{i}Q)$ (hence
$\mathcal{C}_{\mathbb{Z}}(Q)$ and
$\mathcal{C}_{\mathbb{Z}}(\sigma_{i}Q)$) are canonically isomorphic
by fixing the Chevalley generators which correspond to the simple
modules of $\rep(Q)$ and $\rep(\sigma_{i}Q)$ respectively.

We know that (see \cite{bgp}) $\sigma_{i}^{+}$ restricts to an
equivalence of categories:
$$\sigma_{i}^{+}:\rep(Q)[i]\xrightarrow{\sim}\rep(\sigma_{i}Q)[i].$$

Hence it induces an isomorphism of algebras:

$$\sigma_{i}:\mathcal{C}(Q)[i]\xrightarrow{\sim}\mathcal{C}(Q)[i].$$

Dually for any \textit{source} $i\in I$ we can define the reflection
functor $\sigma_{i}^{-}$. We have similar results as above.

\subsection{Automorphisms of enveloping algebra}\label{5 Kac
automorphism} Recall for any $i\in I$ we have the following
automorphism of the enveloping algebra $U$ (see \cite{ka}):
$$r_{i}=\exp(\ad e_{i})\exp(\ad (-f_{i}))\exp(\ad e_{i}):U\xrightarrow{\sim}U.$$

And we have
\begin{align*}
r_{i}(e_{i})&=-f_{i}\\
r_{i}(f_{i})&=-e_{i}\\
r_{i}(e_{j})&=(\ad e_{i})^{(-a_{ij})}(e_{j}),\ \text{for} i\neq j\\
r_{i}(f_{j})&=(-\ad f_{i})^{(-a_{ij})}(f_{j}),\ \text{for} i\neq j.
\end{align*}

Thus $r_{i}$ is also an automorphism of $U_{\mathbb{Z}}$.

\begin{lem}\label{lem 5 ref funct equal to Kac auto}
$\sigma_{i}$ restricting to $\mathcal{C}_{\mathbb{Z}}(Q)[i]$ is a
$\mathbb{Z}$-automorphism which equals to the restriction of
$r_{i}$.
\end{lem}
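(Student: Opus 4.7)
The strategy is to exhibit an explicit $\mathbb{Z}$-generating set of $\mathcal{C}_{\mathbb{Z}}(Q)[i]$ and verify that $\sigma_i$ and $r_i$ coincide on it. Integrality of $\sigma_i$ on this subalgebra will then follow from that of $r_i$, which is automatic since $r_i$ preserves $U_{\mathbb{Z}}$ (\cite{ka}). Via the Ringel--Green isomorphism of Corollary~\ref{cor 3 z-form isom}, $\mathcal{C}_{\mathbb{Z}}(Q)[i]$ corresponds to the $\mathbb{Z}$-subalgebra of $U_{\mathbb{Z}}^+$ generated by $e_j^{(n)}$ for $j \neq i$, $n \geq 1$, together with the Serre-type elements $(\ad e_i)^{(m)}(e_j)$ for $j \neq i$ and $0 < m \leq -a_{ij}$. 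A standard rank-two argument shows these generate every element of $\mathcal{C}_{\mathbb{Z}}(Q)$ whose support avoids direct summands isomorphic to $S_i$.

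Since both $\sigma_i$ and $r_i$ are $\mathbb{Z}$-algebra homomorphisms, it is enough to verify agreement on these generators. When $j$ is not adjacent to $i$ one has $a_{ij}=0$, so $r_i(\mathbf{1}_j) = \mathbf{1}_j$, and $\sigma_i^+ S_j = S_j$ gives $\sigma_i(\mathbf{1}_j) = \mathbf{1}_j$. Compatibility with divided powers is automatic: because $\sigma_i^+ S_j$ remains exceptional in $\sigma_i Q$, Lemma~\ref{lem 3 copies of exc} yields $\sigma_i(\mathbf{1}_j^{(n)}) = \mathbf{1}_{n\,\sigma_i^+ S_j}^{\sigma_i Q} = (\mathbf{1}_{\sigma_i^+ S_j}^{\sigma_i Q})^{(n)}$, matching $r_i(e_j^{(n)}) = r_i(e_j)^{(n)}$ on the Lie-algebra side once the single-generator identity is known. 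Thus everything reduces to the identity
\[
\sigma_i(\mathbf{1}_j) \;=\; (\ad \mathbf{1}_i)^{(-a_{ij})}(\mathbf{1}_j)
\]
for each $j \neq i$ adjacent to $i$ in $Q$.

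I expect this displayed identity to be the main obstacle. Its left-hand side equals $\mathbf{1}_{\sigma_i^+ S_j}^{\sigma_i Q}$, the characteristic function of the unique indecomposable in $\rep(\sigma_i Q)$ of dimension vector $j + mi$ with $m = -a_{ij}$, read in $\mathcal{C}(Q)$ through the canonical identification of Chevalley generators. The right-hand side expands as $\sum_{k=0}^{m}(-1)^{k}\,\mathbf{1}_i^{(m-k)}\mathbf{1}_j\mathbf{1}_i^{(k)}$, and each summand is a convolution characteristic function governed by the formulas in \S\ref{3 char function}. Using Lemmas~\ref{lem 3 copies of exc} and~\ref{lem 3 rep direct} to collect the $S_i$-isotypic summands and separate them from the $S_j$-piece, the computation reduces to the rank-two subquiver spanned by $i$ and $j$ (with $m$ arrows $j \to i$, since $i$ is a sink). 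The crux is to show that the alternating sum of Euler characteristics of the resulting flag varieties collapses to the single characteristic function of the $G_{j+mi}$-orbit of $\sigma_i^+ S_j$; this is the Hall-algebraic incarnation of the Serre relation, and once it is established the lemma follows.
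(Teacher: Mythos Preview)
Your approach differs substantially from the paper's. The paper does not compute anything: it quotes the main result of \cite{xzz}, where the BGP reflection functors are lifted to the root category of $Q$ and shown to induce on all of $\mathfrak{g}(Q)$---hence on $U(\mathfrak{g})$---precisely the Kac automorphism $r_i$. Restricting that identification to $\mathcal{C}_{\mathbb{Z}}(Q)[i]\subset U_{\mathbb{Z}}^{+}$ then gives the lemma in one line. Your route, a direct Hall-algebra verification on generators culminating in the rank-two identity $\sigma_i(\mathbf{1}_j)=(\ad \mathbf{1}_i)^{(-a_{ij})}(\mathbf{1}_j)$, is more elementary and self-contained (no derived or root categories are needed), at the price of actually carrying out the flag-variety computation you flag as the crux.

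One point in your plan needs adjustment. The set you list does \emph{not} $\mathbb{Z}$-generate $\mathcal{C}_{\mathbb{Z}}(Q)[i]$: already for $Q$ of type $A_2$ with sink $i$, the element $\mathbf{1}_{2M(\alpha_i+\alpha_j)}=\bigl((\ad e_i)(e_j)\bigr)^{(2)}$ lies in $\mathcal{C}_{\mathbb{Z}}(Q)[i]$, yet the only monomial of that weight in your generators is $\bigl((\ad e_i)(e_j)\bigr)^{2}=2\,\mathbf{1}_{2M(\alpha_i+\alpha_j)}$. This does not damage the strategy, because your elements do $\mathbb{C}$-generate $\mathcal{C}(Q)[i]$, and that is all you actually need: once $\sigma_i=r_i$ on $\mathcal{C}(Q)[i]$ as $\mathbb{C}$-algebra maps, integrality follows exactly as you say, since $r_i(U_{\mathbb{Z}})=U_{\mathbb{Z}}$ while $\sigma_i$ by construction lands in $\mathcal{H}(\sigma_iQ)[i]\subset U^{+}$, forcing $\sigma_i(\mathcal{C}_{\mathbb{Z}}(Q)[i])\subset U_{\mathbb{Z}}\cap U^{+}=U_{\mathbb{Z}}^{+}$. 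So rephrase the opening sentence of the plan accordingly. Note also that when $-a_{ij}>1$ (the Kronecker case) you must still check $\sigma_i$ against $r_i$ on the intermediate generators $(\ad e_i)^{(m)}(e_j)$ with $0<m<-a_{ij}$; this does not reduce to your displayed identity for $\mathbf{1}_j$ alone, though it yields to the same rank-two method.
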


\begin{proof}
We need a result proved in \cite{xzz} in which the reflection
functors are generalized to the root category. It is proved that the
reflection functor $\sigma_{i}$ induces an automorphism of the whole
Kac-Moody algebra $\mathfrak{g}(Q)$ (and hence an automorphism of
the enveloping algebra $U(\mathfrak{g})$) and this automorphism is
just the same as $r_{i}$.

In our case we goes back to the positive part $U^{+}$. When we
restrict both automorphisms to
$\mathcal{C}_{\mathbb{Z}}(Q)[i]\subset U_{\mathbb{Z}}^{+}$, we get
the result in the lemma.
\end{proof}

\subsection{Admissible sequences}\label{5 admissible seq}
Let $i_{1},\cdots,i_{m}$ be an \textit{admissible source sequence}
of $Q$, i.e. $i_{1}$ is a source of $Q$ and for any $1<t\leq m$, the
vertex $i_{t}$ is a source for
$\sigma_{i_{t-1}}\cdots\sigma_{i_{1}}Q$.

Let $M\in\prei(Q)$ be indecomposable, then there exists an
admissible source sequence $i_{1},\cdots,i_{m}$ of $Q$ such that
$$M=\sigma_{i_{1}}^{+}\cdots\sigma_{i_{m-1}}^{+}(S_{i_{m}})$$
where $S_{i_{m}}$ is the simple
$\mathbb{C}(\sigma_{i_{m-1}}\cdots\sigma_{i_{1}}Q)$-module
corresponding to the vertex $i$. (See \cite{bgp})

\begin{lem}\label{lem 5 obtain prei}
Let $M\in\prei(Q)$ be indecomposable. Then there exists an
admissible source sequence $i_{1},\cdots,i_{m}$ of $Q$ such that
$$\mbf1_{M}=r_{i_{1}}\cdots r_{i_{m-1}}\mbf1_{i_{m}}$$
\end{lem}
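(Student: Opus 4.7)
The plan is to combine the representation-theoretic fact recalled just before the statement with Lemma \ref{lem 5 ref funct equal to Kac auto} which identifies the reflection operator $\sigma_i$ with Kac's automorphism $r_i$ on the relevant subalgebra. First, by the cited result from \cite{bgp}, I would fix an admissible source sequence $i_1,\ldots,i_m$ so that
$$M=\sigma_{i_1}^{+}\cdots\sigma_{i_{m-1}}^{+}(S_{i_m}).$$
Set $Q^{(0)}=Q$ and $Q^{(k)}=\sigma_{i_k}\cdots\sigma_{i_1}Q$, and for $0\le k\le m-1$ define the intermediate module
$$M_k=\sigma_{i_{k+1}}^{+}\cdots\sigma_{i_{m-1}}^{+}(S_{i_m})\in\rep(Q^{(k)}),$$
so that $M_0=M$ and $M_{m-1}=S_{i_m}$.

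Next I would verify the key compatibility: for each $k$, the module $M_k$ lies in $\rep(Q^{(k)})[i_{k+1}]$, i.e. it has no $S_{i_{k+1}}$-summand in the appropriate quiver. This is immediate from the standard BGP theory, because the image of a reflection functor at a source vertex never contains the simple at that vertex as a summand, and each $M_k$ is by construction obtained from $S_{i_m}$ by a chain of such reflections with $i_{k+1}$ being a source for $Q^{(k)}$. Consequently $\mbf1_{M_k}\in\mathcal{H}(Q^{(k)})[i_{k+1}]$ for each $k$, so the algebra homomorphism $\sigma_{i_{k+1}}:\mathcal{H}(Q^{(k)})[i_{k+1}]\to\mathcal{H}(Q^{(k-1)})[i_{k+1}]$ of \ref{5 reflection functor} can be applied and satisfies $\sigma_{i_{k+1}}(\mbf1_{M_k})=\mbf1_{M_{k-1}}$ by the very definition of $\sigma_{i_{k+1}}$.

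Iterating this identity from $k=m-1$ down to $k=0$ gives
$$\mbf1_M=\sigma_{i_1}\sigma_{i_2}\cdots\sigma_{i_{m-1}}\mbf1_{i_m}.$$
To finish, I would invoke Lemma \ref{lem 5 ref funct equal to Kac auto}: since each intermediate element $\sigma_{i_{k+1}}\cdots\sigma_{i_{m-1}}\mbf1_{i_m}=\mbf1_{M_k}$ lies in $\mathcal{C}_{\mathbb{Z}}(Q^{(k)})[i_{k+1}]$ (use Theorem \ref{thm 3 Ringel-Green} plus the fact just checked that the support avoids $S_{i_{k+1}}$), one may replace $\sigma_{i_{k+1}}$ by $r_{i_{k+1}}$ at each step, yielding the desired formula $\mbf1_M=r_{i_1}\cdots r_{i_{m-1}}\mbf1_{i_m}$ and, as a byproduct, the integrality $\mbf1_M\in\mathcal{C}_{\mathbb{Z}}(Q)$.

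The main obstacle is the bookkeeping in the second paragraph: one has to track which subalgebra $\mathcal{H}(Q^{(k)})[i_{k+1}]$ the current element lives in at each stage, and confirm that the admissibility of the source sequence together with the annihilation property of BGP reflection functors really guarantees $M_k$ has no $S_{i_{k+1}}$-summand. Once this is in place, the rest is a mechanical application of Lemma \ref{lem 5 ref funct equal to Kac auto} and the definition of $\sigma_i$.
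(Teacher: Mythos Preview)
Your approach is exactly the paper's: combine the BGP description of indecomposable preinjectives with the definition of $\sigma_i$ and Lemma~\ref{lem 5 ref funct equal to Kac auto}. The paper's proof is a one-liner deferring to precisely these two ingredients, and your write-up simply unpacks that one-liner.

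There is, however, a consistent off-by-one slip in your bookkeeping. With your conventions $M_{k-1}=\sigma_{i_k}^{+}(M_k)$, so the algebra map you apply at each step is $\sigma_{i_k}:\mathcal{H}(Q^{(k)})[i_k]\to\mathcal{H}(Q^{(k-1)})[i_k]$ (note $i_k$ is a \emph{sink} of $Q^{(k)}$), giving $\sigma_{i_k}(\mbf1_{M_k})=\mbf1_{M_{k-1}}$; you wrote $\sigma_{i_{k+1}}$ throughout. Correspondingly, the membership you must verify is $M_k\in\rep(Q^{(k)})[i_k]$, not $[i_{k+1}]$. Your justification (``image of a reflection functor avoids the simple at that vertex'') yields only the latter; the former follows instead from the observation that each $M_k$ is indecomposable and cannot be isomorphic to $S_{i_k}$, since otherwise $\sigma_{i_k}^{+}(M_k)=0$ would force $M=M_0=0$. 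Once the indices are corrected, the rest of your argument goes through verbatim.
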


\begin{proof}
It is clear by the definition of $\sigma_{i}$ and lemma \ref{lem 5
ref funct equal to Kac auto}.
\end{proof}

The similar results can be proved for indecomposable preprojective
modules using \textit{admissible sink sequences} and
$\sigma_{i}^{-}$ instead.

Thus by the above lemma we can see that for any indecomposable
$M\in\prei(Q)$ or $\prep(Q)$, the corresponding
$\mbf1_{M}\in\mathcal{C}_{\mathbb{Z}}(Q)$.

\subsection{$\mathbb{Z}$-Bases of $\mathcal{C}_{\mathbb{Z}}(Q)^{prei}$ and $\mathcal{C}_{\mathbb{Z}}(Q)^{prep}$}\label{5 bases of preinj and preproj}
We will use the notations in \ref{4 preproj and preinj}. Let $I$ be
any preinjective $\mathbb{C}Q$-module, then it can be decomposed
into a direct sum of indecomposable preinjective modules.

\begin{lem}\label{lem 5 char funct of preinj}
For any $I\in\prei(Q)$, if it decomposes as
$$I=\bigoplus_{k=1}^{m} b_{i_{k}}M(\beta_{i_{k}})$$
where
$\beta_{i_{m}}\prec\cdots\prec\beta_{i_{2}}\prec\beta_{i_{1}}\in\Delta_{+}^{prei}$
and $b_{i_{k}}\neq 0$. Then we have
$$\mbf1_{I}=\mbf1_{M(\beta_{i_{m}})}^{(b_{i_{m}})}\cdots \mbf1_{M(\beta_{i_{2}})}^{(b_{i_{2}})}\mbf1_{M(\beta_{i_{1}})}^{(b_{i_{1}})}$$
\end{lem}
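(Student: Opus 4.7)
The plan is to combine the two calculations of Section \ref{sec 3} with the ordering supplied by Lemma \ref{lem 4 rep direct}(2). The idea is that the total order on preinjective roots is precisely what makes Lemma \ref{lem 3 rep direct} applicable, after which Lemma \ref{lem 3 copies of exc} converts each ``block of copies'' into a divided power. So the proof is essentially a reindexing plus two one-line applications.

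First, I would re-index in the reverse of the $\prec$-order by setting $P_k := b_{i_{m+1-k}}M(\beta_{i_{m+1-k}})$ for $k=1,\dots,m$, so that $I \simeq P_1 \oplus \cdots \oplus P_m$ and as $k$ grows the root $\beta_{i_{m+1-k}}$ becomes larger in the order $\succ$. For any indices $a>b$ one has $\beta_{i_{m+1-a}} \succ \beta_{i_{m+1-b}}$, equivalently $i_{m+1-a} < i_{m+1-b}$, and Lemma \ref{lem 4 rep direct}(2) then gives
$$\Hom(M(\beta_{i_{m+1-a}}),M(\beta_{i_{m+1-b}}))=0 \quad \text{and} \quad \Ext^{1}(M(\beta_{i_{m+1-b}}),M(\beta_{i_{m+1-a}}))=0.$$
Passing to direct sums of copies preserves these vanishings, so $\Hom(P_a,P_b)=0$ and $\Ext^{1}(P_b,P_a)=0$ for all $a>b$. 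Lemma \ref{lem 3 rep direct} therefore produces
$$\mbf1_{I} \;=\; \mbf1_{P_1}\mbf1_{P_2}\cdots\mbf1_{P_m} \;=\; \mbf1_{b_{i_m}M(\beta_{i_m})}\,\mbf1_{b_{i_{m-1}}M(\beta_{i_{m-1}})}\cdots\mbf1_{b_{i_1}M(\beta_{i_1})}.$$

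Second, specialising Lemma \ref{lem 4 rep direct}(2) to the diagonal case $i=j=i_k$ gives $\Ext^{1}(M(\beta_{i_k}),M(\beta_{i_k}))=0$, so each indecomposable preinjective $M(\beta_{i_k})$ is exceptional. Applying Lemma \ref{lem 3 copies of exc} factor by factor then replaces $\mbf1_{b_{i_k}M(\beta_{i_k})}$ by $\mbf1_{M(\beta_{i_k})}^{(b_{i_k})}$, which yields the asserted formula.

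No serious obstacle is expected; the argument is essentially bookkeeping, the only subtle point being to align the ordering in the decomposition with the order required by Lemma \ref{lem 3 rep direct}, which is exactly the reason the formula lists the factor for the smallest $\beta_{i_k}$ on the left.
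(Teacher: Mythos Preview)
Your proof is correct and follows exactly the same approach as the paper's own proof, which simply cites the representation-directed property (Lemma \ref{lem 4 rep direct}) together with Lemma \ref{lem 3 copies of exc} and Lemma \ref{lem 3 rep direct}. You have merely unpacked the reindexing and the verification of the Hom/Ext vanishing conditions that the paper leaves implicit.
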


\begin{proof}
The key is the representation-directed property (see \ref{4 preproj
and preinj}). Then the result is clear by Lemma \ref{lem 3 copies of
exc} and Lemma \ref{lem 3 rep direct}.
\end{proof}

From this lemma we can see that
$\mbf1_{I}\in\mathcal{C}_{\mathbb{Z}}(Q)$ for all $I\in\prei(Q)$,
which we have claimed in the beginning of this section. Now we can
give a $\mathbb{Z}$-basis of $\mathcal{C}_{\mathbb{Z}}(Q)^{prei}$.

\begin{prop}\label{prop 5 bases of preinj}
The set $\{\mbf1_{I}|I \in\prei(Q)\}$ is a $\mathbb{Z}$-basis of
$\mathcal{C}_{\mathbb{Z}}(Q)^{prei}$ (hence also a
$\mathbb{C}$-basis of $\mathcal{C}(Q)^{prei}$).
\end{prop}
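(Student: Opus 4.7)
The plan is to establish spanning and linear independence separately; both follow directly from the preceding lemmas together with two basic facts, namely extension-closedness of $\prei(Q)$ and integrality of Euler characteristics.

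First, every $\mathbf{1}_I$ with $I \in \prei(Q)$ belongs to $\mathcal{C}_{\mathbb{Z}}(Q)^{prei}$ by definition, and belongs to $\mathcal{C}_{\mathbb{Z}}(Q)$ itself by Lemma \ref{lem 5 char funct of preinj} together with Lemma \ref{lem 5 obtain prei}. The main step is to establish the reverse inclusion, namely that an arbitrary product $\mathbf{1}_{I_1} \mathbf{1}_{I_2} \cdots \mathbf{1}_{I_k}$ of preinjective characteristic functions expands as a $\mathbb{Z}$-linear combination of characteristic functions $\mathbf{1}_L$ with $L$ preinjective. By induction on $k$ this reduces to the case $k = 2$: the formula $\mathbf{1}_{I_1} \mathbf{1}_{I_2}(L) = \chi(\mathcal{F}(I_1, I_2; L))$ recalled in Subsection \ref{3 char function} produces an integer-valued coefficient which vanishes unless $L$ fits in a short exact sequence $0 \to I_2 \to L \to I_1 \to 0$; since $\prei(Q)$ is closed under extensions, any such $L$ is itself preinjective.

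Linear independence is immediate: each $\mathbf{1}_I$ is the characteristic function of a distinct $G_\alpha$-orbit $\mathcal{O}_I$, and these orbits are pairwise disjoint (indexed by isomorphism classes of modules of dimension $\alpha$), so the $\mathbf{1}_I$ are $\mathbb{Z}$-linearly independent as functions on $\mathbf{E}_\alpha^{nil}$. The $\mathbb{C}$-basis statement then follows by tensoring with $\mathbb{C}$. I do not expect any serious obstacle here. The only point meriting a word of care is finiteness of the expansion sum; this is automatic since for any fixed dimension vector $\alpha$ there are only finitely many isomorphism classes of preinjective modules of dimension $\alpha$ (by Theorem \ref{thm 4 Kac}, because preinjective dimension vectors are positive real roots), so each graded piece of $\mathcal{C}_{\mathbb{Z}}(Q)^{prei}$ is finitely supported.
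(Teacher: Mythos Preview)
Your proof is correct and follows essentially the same approach as the paper's: extension-closedness of $\prei(Q)$ together with the product formula $\mbf1_{I_1}\mbf1_{I_2}(L)=\chi(\mathcal{F}(I_1,I_2;L))$ and integrality of Euler characteristics gives the $\mathbb{Z}$-spanning, and linear independence is immediate from disjointness of orbits. Your version is more explicit about linear independence and about the finiteness argument (invoking Kac's theorem), both of which the paper leaves implicit, but the underlying strategy is identical.
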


\begin{proof}
Since the subcategory $\prei(Q)$ is extension-closed, we have for
any $I_{1},I_{2}\in \prei(Q)$,
$$\mbf1_{I_{1}}\mbf1_{I_{2}}=\sum_{I\in\prei(Q);\dimv I=\dimv I_{1}+\dimv I_{2}}\chi(\mathcal{F}(I_{1},I_{2};I))\mbf1_{I}$$
where the right hand side is a finite sum because the number of
non-isomorphic modules in $\prei(Q)$ with fixed dimension vector is
finite.

Now our proposition follows as the Euler characteristic is always an
integer.
\end{proof}

Similarly, we have the following results for preprojective modules.

\begin{lem}\label{lem 5 char funct of preproj}
For any $P\in\prep(Q)$, if it decomposes as
$$P=\bigoplus_{k=1}^{m} a_{i_{k}}M(\alpha_{i_{k}})$$
where
$\alpha_{i_{1}}\prec\alpha_{i_{2}}\prec\cdots\prec\alpha_{i_{m}}\in\Delta_{+}^{prep}$
and $a_{i_{k}}\neq 0$. Then we have
$$\mbf1_{P}=\mbf1_{M(\alpha_{i_{1}})}^{(a_{i_{1}})}\mbf1_{M(\alpha_{i_{2}})}^{(a_{i_{2}})}\cdots \mbf1_{M(\alpha_{i_{m}})}^{(a_{i_{m}})}$$
\end{lem}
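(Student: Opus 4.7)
The plan is to mirror the proof of Lemma \ref{lem 5 char funct of preinj}, reading the ordering $\prec$ on $\Delta_{+}^{prep}$ in the opposite direction. Everything needed is already in place: the representation-directed property of Lemma \ref{lem 4 rep direct}(1), together with Lemmas \ref{lem 3 copies of exc} and \ref{lem 3 rep direct}.

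First I would set $M_{k}=a_{i_{k}}M(\alpha_{i_{k}})$ for $1\leq k\leq m$, so that $P=M_{1}\oplus M_{2}\oplus\cdots\oplus M_{m}$ with the summands listed in the order dictated by $\alpha_{i_{1}}\prec\alpha_{i_{2}}\prec\cdots\prec\alpha_{i_{m}}$. To apply Lemma \ref{lem 3 rep direct} to the ordered family $(M_{1},\ldots,M_{m})$, I need $\Hom(M_{k},M_{l})=0$ and $\Ext^{1}(M_{l},M_{k})=0$ whenever $k>l$. Both vanishings are immediate from Lemma \ref{lem 4 rep direct}(1): since $k>l$ corresponds to $\alpha_{i_{k}}\succ\alpha_{i_{l}}$, we have $\Hom(M(\alpha_{i_{k}}),M(\alpha_{i_{l}}))=0$ and $\Ext^{1}(M(\alpha_{i_{l}}),M(\alpha_{i_{k}}))=0$, and these vanishings pass to the direct sums $M_{k}$, $M_{l}$. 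Thus Lemma \ref{lem 3 rep direct} yields
$$\mbf1_{P}=\mbf1_{a_{i_{1}}M(\alpha_{i_{1}})}\,\mbf1_{a_{i_{2}}M(\alpha_{i_{2}})}\cdots\mbf1_{a_{i_{m}}M(\alpha_{i_{m}})}.$$

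Second, taking $i=j=i_{k}$ in Lemma \ref{lem 4 rep direct}(1) gives $\Ext^{1}(M(\alpha_{i_{k}}),M(\alpha_{i_{k}}))=0$, so each $M(\alpha_{i_{k}})$ is exceptional. Hence Lemma \ref{lem 3 copies of exc} turns each factor into a divided power:
$$\mbf1_{a_{i_{k}}M(\alpha_{i_{k}})}=\mbf1_{M(\alpha_{i_{k}})}^{(a_{i_{k}})}.$$
Substituting into the previous display gives the claimed formula.

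I do not expect a genuine obstacle here; the only point of attention is making sure that the direction of the ordering is handled correctly, namely that the preprojective indexing ``goes up'' along $\prec$ (so $\Ext^{1}$ vanishes from earlier to later summands), which is the opposite convention from Lemma \ref{lem 5 char funct of preinj} and which is precisely why the divided powers of $M(\alpha_{i_{1}}),\ldots,M(\alpha_{i_{m}})$ appear in increasing, rather than decreasing, order of $\prec$.
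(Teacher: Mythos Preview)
Your argument is correct and matches the paper's approach exactly: the paper presents this lemma as the preprojective analogue of Lemma~\ref{lem 5 char funct of preinj}, whose proof invokes precisely the representation-directed property of Lemma~\ref{lem 4 rep direct} together with Lemmas~\ref{lem 3 copies of exc} and~\ref{lem 3 rep direct}. Your careful check of the ordering convention (with $\Ext^{1}$ vanishing from earlier to later summands under $\prec$) is just what is needed to see that the divided powers appear in increasing order.
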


\begin{prop}\label{prop 5 bases of preproj}
The set $\{\mbf1_{P}|P \in\prep(Q)\}$ is a $\mathbb{Z}$-basis of
$\mathcal{C}_{\mathbb{Z}}(Q)^{prep}$ (hence a $\mathbb{C}$-basis of
$\mathcal{C}(Q)^{prep}$).
\end{prop}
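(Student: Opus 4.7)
The plan is to mirror the proof of Proposition \ref{prop 5 bases of preinj} verbatim, with preinjectives replaced by preprojectives throughout. The ingredients needed are all already in place by the time we reach this statement.

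First I would note that Lemma \ref{lem 5 char funct of preproj} already guarantees $\mbf1_P \in \mathcal{C}_{\mathbb{Z}}(Q)$ for every $P\in\prep(Q)$: it writes $\mbf1_P$ as an explicit ordered product of divided powers $\mbf1_{M(\alpha_{i_k})}^{(a_{i_k})}$, and each factor $\mbf1_{M(\alpha_{i_k})}$ lies in $\mathcal{C}_{\mathbb{Z}}(Q)$ by Lemma \ref{lem 5 obtain prei} (in its preprojective/admissible sink sequence version, noted right after that lemma). Hence the set $\{\mbf1_P \mid P\in\prep(Q)\}$ is contained in $\mathcal{C}_{\mathbb{Z}}(Q)^{prep}$.

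Next I would argue that this set spans $\mathcal{C}_{\mathbb{Z}}(Q)^{prep}$ over $\mathbb{Z}$. Since $\mathcal{C}_{\mathbb{Z}}(Q)^{prep}$ is generated as a $\mathbb{Z}$-algebra by the $\mbf1_P$, it suffices to check closure under multiplication. For any $P_1,P_2\in\prep(Q)$, the subcategory $\prep(Q)$ is extension-closed (Section \ref{4 preproj and preinj}), so any extension of $P_1$ by $P_2$ is again preprojective, and the calculation in \ref{3 char function} gives
$$\mbf1_{P_1}\mbf1_{P_2} \;=\; \sum_{\substack{P\in\prep(Q)\\ \dimv P=\dimv P_1+\dimv P_2}}\chi\bigl(\mathcal{F}(P_1,P_2;P)\bigr)\,\mbf1_P.$$
The sum is finite because for a fixed dimension vector there are only finitely many preprojective isomorphism classes (each preprojective indecomposable has a unique dimension vector in $\Delta_+^{prep}$, and there are only finitely many ways to split a fixed vector into such pieces). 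Each coefficient $\chi(\mathcal{F}(P_1,P_2;P))$ is an integer, so the product stays in the $\mathbb{Z}$-span of the $\mbf1_P$.

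Linear independence is the easiest part: the modules $P\in\prep(Q)$ are pairwise non-isomorphic, hence their $G_\alpha$-orbits $\mathcal{O}_P$ in $\mathbf{E}_\alpha^{nil}$ are pairwise disjoint, so the $\mbf1_P$ have disjoint supports and any nontrivial $\mathbb{Z}$-linear combination evaluates nonzero at some point. Combining the three steps yields both the $\mathbb{Z}$-basis statement and, by extending scalars to $\mathbb{C}$, the $\mathbb{C}$-basis statement for $\mathcal{C}(Q)^{prep}$. There is no serious obstacle here; the only mild subtlety is confirming that $\prep(Q)$ is extension-closed and that dimension vectors determine preprojective modules up to finite ambiguity, both of which are standard consequences of Proposition \ref{prop 4 repdirect} and Theorem \ref{thm 4 Kac}(2) together with Lemma \ref{lem 4 rep direct}(1).
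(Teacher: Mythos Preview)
Your proposal is correct and follows exactly the paper's approach: the paper states this proposition right after the preinjective version with the remark ``Similarly, we have the following results for preprojective modules,'' and the preinjective proof (Proposition \ref{prop 5 bases of preinj}) uses precisely the extension-closedness of $\prep(Q)$, the finiteness of preprojective isoclasses with fixed dimension vector, and the integrality of Euler characteristics. You have simply spelled out in more detail the steps (linear independence via disjoint supports, membership in $\mathcal{C}_{\mathbb{Z}}(Q)$) that the paper leaves implicit.
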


\subsection{Remarks}\label{5 rem}
(1). The arguments in this section are essentially the same as in
the case of finite type. In fact when $Q$ is of finite type, we have
$\mathcal{C}(Q)^{prep}=\mathcal{C}(Q)^{prei}=\mathcal{C}(Q)$. Thus a
$\mathbb{Z}$-basis of $\mathcal{C}_{\mathbb{Z}}(Q)$ has been given.

(2). The proofs of many results in this section is similar to the
quantum case. For example, see \cite{r3} (which discussed the case
of finite type).

\section{Integral basis: The case of the Kronecker quiver}\label{sec 6}

In this section we consider the simplest tame quiver, namely the
Kronecker quiver $K=(I,\Omega,s,t)$ where $I=\{1,2\}$,
$\Omega=\{\rho_{1},\rho_{2}\}$, $s(\rho_{1})=s(\rho_{2})=1$ and
$t(\rho_{1})=t(\rho_{2})=2$. Note that this quiver is the only
non-simply-laced tame quiver.

\subsection{Some notations}\label{6 rep kronecker}
For convenience in this section we will identify $\mathbb{N}^{2}$
with $\mathbb{N}[I]$ and write the dimension vectors as
$(a,b)\in\mathbb{N}^{2}$. The set of positive roots are
$$\Delta_{+}=\{(n,n+1),(m+1,m),(l+1,l+1)|n,m,l\in\mathbb{N}\}.$$

And we have $\Delta_{+}^{prep}=\{(m+1,m)|m\in\mathbb{N}\}$,
$\Delta_{+}^{prei}=\{(n,n+1)|n\in\mathbb{N}\}$,
$\Delta_{+}^{reg}=\{(l+1,l+1)|l\in\mathbb{N}\}$ respectively. Note
that the minimal imaginary root $\delta=(1,1)$. Hence in this case
$\Delta_{+}^{reg}=\Delta_{+}^{im}$.

The order on $\Delta_{+}^{prei}$ given by the
representation-directed property (lemma \ref{lem 4 rep direct}) is
$$\cdots\prec(n,n+1)\prec\cdots\prec(1,2)\prec(0,1),$$
and the order on $\Delta_{+}^{prep}$ is
$$(1,0)\prec(2,1)\prec\cdots\prec(m+1,m)\prec\cdots.$$

Recall \ref{4 tame quiver} that
$\reg(K)\simeq\coprod_{x\in\mathbb{P}^{1}}\mathcal{T}_{x}$, where
$\mathcal{T}_{x}\simeq\rep_{0}(C_{1})$ for all $x$.

\subsection{A basis of $\mathfrak{n}^{+}(K)$}\label{6 basis of kronecker lie
alg} In this section, for simplicity we will denote by
$\mbf1_{\alpha}$ the characteristic function $\mbf1_{M(\alpha)}$ for
any $\alpha\in\Delta_{+}^{re}$. Since $\dimv S_{1}=(1,0)$, $\dimv
S_{2}=(0,1)$, we write $\mbf1_{1}=\mbf1_{(1,0)}$,
$\mbf1_{2}=\mbf1_{(0,1)}$.

Following \cite{fmv}, for any $n\geq 1$, the set of all
indecomposable regular modules with dimension vector $n\delta$ is a
constructible subset of $\mathbf{E}_{n\delta}$ (see \ref{3 variety
of rep}). Let $P_{n\delta}$ be the characteristic function of this
set. Hence $P_{n\delta}\in\mathcal{H}(Q)$.

The following results have been proved in \cite{fmv}:

\begin{prop}\label{prop 6 fmv kronecker}
The set
$$\{\mbf1_{(m,m+1)},\mbf1_{(n+1,n)},P_{k\delta}|m,n\geq
0;k\geq 1\}$$ is a basis of the maximal nilpotent subalgebra
$\mathfrak{n}^{+}(K)$ of the Lie algebra $\mathfrak{g}(K)$.

Moreover, the structure constants with respect to the basis are
clear:
\begin{gather*}
[P_{m\delta}, P_{n\delta}]=0;\\
[\mbf1_{(n,n+1)}, \mbf1_{(m,m+1)}]=0;\\
[\mbf1_{(n+1,n)}, \mbf1_{(m+1,m)}]=0;\\
[P_{n\delta}, \mbf1_{(m+1,m)}]=2\mbf1_{(m+n+1,m+n)};\\
[\mbf1_{(m,m+1)}, P_{n\delta}]=2\mbf1_{(m+n,m+n+1)};\\
[\mbf1_{(m,m+1)}, \mbf1_{(n+1,n)}]=P_{(m+n+1)\delta};\\
\end{gather*}
for any $m,n\in \mathbb{N}$.
\end{prop}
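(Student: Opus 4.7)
The plan is to exploit Theorem \ref{thm 3 Ringel-Green}, which identifies $\mathcal{C}(K)$ with $U^{+}(\mathfrak{g}(K))=U(\mathfrak{n}^{+}(K))$. Under this isomorphism, $\mathfrak{n}^{+}(K)$ sits inside $\mathcal{C}(K)$ as the Lie subalgebra generated by $\mbf1_{1}, \mbf1_{2}$ under the commutator, so the whole statement can be verified inside $\mathcal{H}(K)$ by computing products of constructible functions. Since $\mathfrak{n}^{+}(K)$ is $\mathbb{N}[I]$-graded and the listed elements live in distinct graded pieces (each real root space of dimension one, and, by Lemma \ref{lem 4 non-hom tubes and multi of im root} with $s=0$, each imaginary root space also of dimension $|I|-1=1$), linear independence follows automatically once we know each element is nonzero and lies in the correct weight space. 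Hence it suffices to verify that every listed element is genuinely in $\mathfrak{n}^{+}(K)$ and to establish the six commutation relations; a dimension count then forces these elements to span.

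For the real root elements $\mbf1_{(m,m+1)}$ and $\mbf1_{(n+1,n)}$, Lemma \ref{lem 5 obtain prei} (and its preprojective analogue) expresses each of them as $r_{i_{1}}\cdots r_{i_{k-1}}\mbf1_{i_{k}}$ for an appropriate admissible source/sink sequence. Since the Kac automorphisms $r_{i}$ preserve $\mathfrak{n}^{+}(K)$ and carry simple root vectors to real root vectors, these characteristic functions lie in the corresponding one-dimensional real root spaces. For the imaginary root element $P_{n\delta}$, the idea is to \emph{define} its membership in $\mathfrak{n}^{+}(K)$ by realizing it as a bracket: specialising the last relation to $m=0$ and shifting the index gives $[\mbf1_{(0,1)}, \mbf1_{(n,n-1)}] = P_{n\delta}$, so once this particular identity is verified, $P_{n\delta}$ automatically lies in $\mathfrak{n}^{+}(K)$.

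The technical engine of the proof is the formula $\mbf1_{M}\mbf1_{N}(L)=\chi(\mathcal{F}(M,N;L))$ of \ref{3 char function}. For each of the six relations I would fix typical representatives $M, N$ and evaluate $\chi(\mathcal{F}(M,N;L)) - \chi(\mathcal{F}(N,M;L))$ on every isomorphism class $L$ of the correct dimension. The three vanishing brackets (two preprojective, two preinjective, and two imaginary-root) are the easiest: Lemmas \ref{lem 3 copies of exc} and \ref{lem 3 rep direct} together with the representation-directed property of Lemma \ref{lem 4 rep direct} convert each product of $\mbf1$'s into an ordered filtration count and give commutativity immediately, while $[P_{m\delta},P_{n\delta}]=0$ follows from the fact that regular tubes are pairwise $\Hom$- and $\Ext$-orthogonal. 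The remaining three mixed relations are where the $\mathbb{P}^{1}$ parameter of the regular component enters: the variety $\mathcal{F}$ fibres over the family of tubes indexed by $\mathbb{P}^{1}$, and the coefficient $2$ appearing in the relations is precisely $\chi(\mathbb{P}^{1})=2$.

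The main obstacle I anticipate is the bookkeeping for these $\mathbb{P}^{1}$ calculations, where $\mathcal{F}(M,N;L)$ receives contributions both from generic tubes and from possible degenerations on $L$; one has to show that for each $L$ of the prescribed dimension vector the fibration over $\mathbb{P}^{1}$ is clean enough that $\chi$ can be read off directly. The other potential subtlety is verifying that the constructible function $P_{n\delta}$, defined a priori as the characteristic function of the locus of indecomposable regular modules, really equals $[\mbf1_{(0,1)},\mbf1_{(n,n-1)}]$ pointwise; this reduces to checking that $\chi(\mathcal{F})$ of the relevant variety of extensions is $1$ on each indecomposable regular module of dimension $n\delta$ and $0$ on decomposable regular modules. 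Once these Euler characteristic identities are in hand, the six relations close the span under the bracket, the dimension count finishes the argument, and the proposition follows.
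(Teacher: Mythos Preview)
The paper does not prove this proposition at all: it is quoted verbatim from \cite{fmv}, as the line ``The following results have been proved in \cite{fmv}:'' immediately preceding Proposition~\ref{prop 6 fmv kronecker} indicates. So there is no proof in the paper to compare against; your sketch is an attempt to reconstruct what \cite{fmv} does, not to reproduce anything the present authors wrote.

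As a sketch of an independent proof, your outline is essentially the right shape and matches the strategy of \cite{fmv}: identify $\mathcal{C}(K)$ with $U(\mathfrak{n}^{+}(K))$, place the listed elements in the correct root spaces, verify the brackets by Euler-characteristic calculations in the Hall algebra, and use the known root multiplicities (all equal to $1$ here) to conclude. The observation that the coefficient $2$ is $\chi(\mathbb{P}^{1})$ is exactly the geometric heart of the matter. One small inaccuracy: the Kac automorphisms $r_{i}$ do \emph{not} preserve $\mathfrak{n}^{+}(K)$ (indeed $r_{i}(e_{i})=-f_{i}$); what Lemma~\ref{lem 5 obtain prei} actually gives you is that each $\mbf1_{M(\alpha)}$ for $\alpha\in\Delta_{+}^{prep}\cup\Delta_{+}^{prei}$ lies in $\mathcal{C}_{\mathbb{Z}}(K)$ and equals a real root vector, which is what you need. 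You should also be aware that in verifying $[\mbf1_{(0,1)},\mbf1_{(k,k-1)}]=P_{k\delta}$ the commutator is a priori a function on all of $\mathbf{E}_{k\delta}$, so you must check it vanishes on modules with nonzero preprojective or preinjective summands, not only on decomposable regular modules; this is straightforward but should be said.
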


Since
$\mbf1_{(m,m+1)},\mbf1_{(n+1,n)}\in\mathcal{C}_{\mathbb{Z}}(K)$ (see
section \ref{5 admissible seq}), by the last formula in the above
proposition we can see that
$P_{n\delta}\in\mathcal{C}_{\mathbb{Z}}(K)$ for any $n\geq 1$.

\subsection{The function $H_{n\delta}$}\label{6 the function H}
For $n\geq 1$, the set of all regular modules (may be decomposable)
with dimension vector $n\delta$ is also a constructible subset. Let
$H_{n\delta}$ be the characteristic function of this set. For
convenience we also set $H_{0\delta}=1$.

\begin{lem}\label{lem 6 puzhang}
\begin{align*}
&\mbf1_{2}^{(n)}\mbf1_{1}^{(n+1)}=\mbf1_{(n+1,n)}+\sum_{l=1}^{n}\mbf1_{(n+1-l,n-l)}H_{l\delta}
+\sum_{P,I,l}\mbf1_{P}H_{l\delta}\mbf1_{I};\\
&\mbf1_{2}^{(n+1)}\mbf1_{1}^{(n)}=\mbf1_{(n,n+1)}+\sum_{l=1}^{n}H_{l\delta}\mbf1_{(n-l,n+1-l)}
+\sum_{P,I,l}\mbf1_{P}H_{l\delta}\mbf1_{I};\\
&\mbf1_{2}^{(n)}\mbf1_{1}^{(n)}=H_{n\delta}+\sum_{P,I,l}\mbf1_{P}H_{l\delta}\mbf1_{I},
\end{align*}
where in the formulas the last terms sum over all non-zero
$P\in\prep(Q)$, $I\in\prei(Q)$ and $1<l<n-1$ such that $\dimv
P+\dimv I+(l,l)=(n+1,n),(n,n+1),(n,n)$ respectively.
\end{lem}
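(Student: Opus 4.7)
The plan is to evaluate both sides as constructible functions and verify that they agree pointwise. First, by Lemma \ref{lem 3 copies of exc}, each simple $S_i$ being exceptional gives $\mbf1_i^{(k)}=\mbf1_{kS_i}$. For the first identity, Subsection \ref{3 char function} yields
\[
\mbf1_{nS_2}\mbf1_{(n+1)S_1}(L)=\chi(\mathcal{F}(nS_2,(n+1)S_1;L)),
\]
the Euler characteristic of the variety of submodules $L'\subset L$ with $L'\simeq(n+1)S_1$ and $L/L'\simeq nS_2$. For $L$ of dimension vector $(n+1,n)$, such a submodule is forced to have $V_1'=V_1$ and $V_2'=0$, which is a single point; hence the LHS takes value $1$ on every $L$ of dimension vector $(n+1,n)$, so it equals $\sum_{[L]:\,\dimv L=(n+1,n)}\mbf1_L$. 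The second and third identities are treated identically, using $nS_1$ as the submodule in each case.

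Next I use the tame-type structure of $\rep(K)$ recalled in Section \ref{sec 4}: every $L$ decomposes uniquely up to isomorphism as $L=P\oplus R\oplus I$ with $P\in\prep(K)$, $R\in\reg(K)$, $I\in\prei(K)$. By Proposition \ref{prop 4 repdirect}, all the relevant $\Hom$'s and $\Ext^1$'s between these three layers vanish in the ``wrong direction,'' so iterated application of Lemma \ref{lem 3 rep direct} gives $\mbf1_L=\mbf1_P\mbf1_R\mbf1_I$. Grouping isomorphism classes $[L]$ of the fixed dimension vector by the classes of $P$ and $I$, and invoking $\sum_{[R]:\,\dimv R=l\delta,\,R\text{ regular}}\mbf1_R=H_{l\delta}$, I obtain
\[
\sum_{[L]}\mbf1_L=\sum_{P,I,l}\mbf1_P H_{l\delta}\mbf1_I,
\]
the sum being over isomorphism classes of $P,I$ and integers $l\ge 0$ satisfying the dimension constraint.

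The final step is to separate out the explicit terms. For the first identity, the constraint $\dimv P+\dimv I+(l,l)=(n+1,n)$, combined with the formulas $\dimv M(k{+}1,k)=(k{+}1,k)$ for preprojective and $\dimv M(k,k{+}1)=(k,k{+}1)$ for preinjective indecomposables, forces the total number of preprojective summands of $P$ to exceed that of preinjective summands of $I$ by exactly one. In particular, when $I=0$ the module $P$ must be indecomposable, hence $P=M(n+1-l,n-l)$; these contributions give the isolated term $\mbf1_{(n+1,n)}$ (corresponding to $l=0$) together with the sum $\sum_{l=1}^{n}\mbf1_{(n+1-l,n-l)}H_{l\delta}$. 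All other contributions have $I\ne 0$ (and then automatically $P\ne 0$) and collect into $\sum_{P,I,l}\mbf1_P H_{l\delta}\mbf1_I$. The second identity is entirely parallel, with the roles of preprojectives and preinjectives swapped (now $\#I-\#P=1$); the third identity uses $\#P=\#I$, and the purely regular case $P=I=0$ yields the leading term $H_{n\delta}$.

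The main technical point is the first step, namely showing that the LHS takes value exactly $1$ on every isomorphism class of the specified dimension vector. This rests on the observation that the subrepresentation of the form $(n+1)S_1$ (resp.\ $nS_1$) inside a module of dimension vector $(n+1,n)$, $(n,n+1)$, or $(n,n)$ is uniquely determined by the dimension vector (one must take the entire $V_1$ and zero at $V_2$), so the parameter variety is a point. Granted this, the remainder is bookkeeping via Lemma \ref{lem 3 rep direct} and the classification in Section \ref{sec 4}.
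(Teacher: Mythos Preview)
Your proof is correct and follows essentially the same approach as the paper: reduce to $\mbf1_{nS_2}\mbf1_{(n+1)S_1}$ via Lemma~\ref{lem 3 copies of exc}, show the product takes the value $1$ on every module of the given dimension vector, and then decompose each module as $P\oplus R\oplus I$ using Lemma~\ref{lem 3 rep direct} and the definition of $H_{l\delta}$. Your justification that the filtration variety is a single point (the dimension constraint forces $V_1'=V_1$, $V_2'=0$) is in fact more transparent than the paper's appeal to $\Hom(S_1,S_2)=0$, and your explicit dimension-vector bookkeeping for separating out the leading terms is a detail the paper leaves implicit.
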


\begin{proof}
We just prove (1), the proofs for (2) and (3) are similar.

By lemma \ref{lem 3 copies of exc} we know that
$$\mbf1_{2}^{(n)}\mbf1_{1}^{(n+1)}=\mbf1_{nS_{2}}\mbf1_{(n+1)S_{1}}.$$

Note that $\Ext^{1}(S_{2},S_{1})\neq 0$ and
$\Ext^{1}(S_{1},S_{2})=0$. So each module with dimension vector
$(n+1,n)$ is in the support of $\mbf1_{nS_{2}}\mbf1_{(n+1)S_{1}}$.
Thus the support of $\mbf1_{nS_{2}}\mbf1_{(n+1)S_{1}}$ contains
infinitely many orbits of non-isomorphic modules. But for any such
module $M$ we have
$$\mbf1_{nS_{2}}\mbf1_{(n+1)S_{1}}(M)=\chi(\mathcal{F}(nS_{2},(n+1)S_{1};M))=1,$$
since $\Hom(S_{1},S_{2})=0$.

Note that each module can be decomposed into a direct sum of
preprojective, regular and preinjective modules. Then using lemma
\ref{lem 3 rep direct} and the definition of $H_{n\delta}$ we get
the formula (1).
\end{proof}

\begin{cor}\label{cor 6 H integral}
$H_{n\delta}\in\mathcal{C}_{\mathbb{Z}}(K)$, for any $n\geq 1$.
\end{cor}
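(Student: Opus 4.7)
The plan is to induct on $n \geq 1$, using the third identity in Lemma~\ref{lem 6 puzhang} rearranged to solve for $H_{n\delta}$:
\begin{equation*}
H_{n\delta} \;=\; \mbf1_{2}^{(n)}\mbf1_{1}^{(n)} \;-\; \sum_{P,I,l} \mbf1_{P}\,H_{l\delta}\,\mbf1_{I},
\end{equation*}
and then showing that every term on the right-hand side already lies in $\mathcal{C}_{\mathbb{Z}}(K)$.

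For the base case $n=1$, the correction sum is (at worst) a product of characteristic functions of the simple preprojective and preinjective modules combined with the convention $H_{0\delta}=1$, so $H_{\delta}$ is exhibited as an explicit $\mathbb{Z}$-linear combination of monomials in $\mbf1_{1}$ and $\mbf1_{2}$ and therefore belongs to $\mathcal{C}_{\mathbb{Z}}(K)$ by definition.

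For the inductive step, fix $n>1$ and assume $H_{l\delta}\in\mathcal{C}_{\mathbb{Z}}(K)$ for every $l<n$. The leading term $\mbf1_{2}^{(n)}\mbf1_{1}^{(n)}$ is a product of divided powers of the Chevalley generators, hence lies in $\mathcal{C}_{\mathbb{Z}}(K)$ by construction. In each summand of the correction, the factors $\mbf1_{P}$ and $\mbf1_{I}$ lie in $\mathcal{C}_{\mathbb{Z}}(K)^{prep}$ and $\mathcal{C}_{\mathbb{Z}}(K)^{prei}$ respectively by Propositions~\ref{prop 5 bases of preproj} and \ref{prop 5 bases of preinj}, and the factor $H_{l\delta}$ is integral by the induction hypothesis. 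Since $\mathcal{C}_{\mathbb{Z}}(K)$ is closed under multiplication, the right-hand side lies in $\mathcal{C}_{\mathbb{Z}}(K)$, and therefore so does $H_{n\delta}$.

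The only delicate point—and the one worth checking explicitly—is that every index $l$ appearing in the correction sum indeed satisfies $l<n$, so that the induction hypothesis is applicable. This follows directly from the dimension-vector constraint $\dimv P+\dimv I+(l,l)=(n,n)$ stated in Lemma~\ref{lem 6 puzhang}: any term distinct from the $H_{n\delta}$ term on the left must have at least one of $P,I$ non-zero, which forces $\dimv P+\dimv I\neq 0$ and hence $l<n$. Beyond this bookkeeping, there is no serious obstacle: the whole argument is a clean induction driven by the third formula of Lemma~\ref{lem 6 puzhang} together with the integrality of preprojective and preinjective characteristic functions already established in Section~\ref{sec 5}.
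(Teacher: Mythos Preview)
Your proof is correct and follows essentially the same approach as the paper's: both argue by induction on $n$ using the third formula of Lemma~\ref{lem 6 puzhang}, noting that the left-hand side and the factors $\mbf1_{P}$, $\mbf1_{I}$ are already known to lie in $\mathcal{C}_{\mathbb{Z}}(K)$, so the inductive hypothesis on the $H_{l\delta}$ with $l<n$ finishes the job. You have simply made explicit the bookkeeping (in particular the dimension-vector constraint forcing $l<n$) that the paper leaves to the reader.
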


\begin{proof}
The left hand sides of the formulas in the above lemma are in
$\mathcal{C}_{\mathbb{Z}}(K)$. Also we know that for any
$P\in\prep(K)$, $I\in\prei(K)$,
$\mbf1_{P},\mbf1_{I}\in\mathcal{C}_{\mathbb{Z}}(K)$. Then the
corollary follows easily by induction on $n$.
\end{proof}

By concrete calculations we can find the relation between
$H_{n\delta}$ and $P_{n\delta}$:

\begin{lem}\label{lem 6 relation of P and H}
For any $n\in \mathbb{N},n\geq 1$,
\begin{equation*}
H_{n\delta}=\frac{1}{n}\sum_{l=0}^{n-1}H_{l\delta}P_{(n-l)\delta}
\end{equation*}
\end{lem}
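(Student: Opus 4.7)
The plan is to verify the identity pointwise on $\mathbf{E}_{n\delta}^{nil}$. First, I would check that both sides vanish off the locus of regular modules of dimension $n\delta$. For $H_{n\delta}$ this holds by definition, while each product $H_{l\delta}P_{(n-l)\delta}$ vanishes outside $\reg(K)$ because any non-zero value at a module $M$ would give a short exact sequence $0\to M'\to M\to M/M'\to 0$ with $M'$ indecomposable regular and $M/M'$ regular, forcing $M\in\reg(K)$ by extension-closedness.

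Next, for a regular $M$ of dimension $n\delta$, I would invoke the tube decomposition $\reg(K)\simeq\coprod_{x\in\mathbb{P}^{1}}\mathcal{T}_{x}$ from Lemma \ref{lem 4 regular component} to write $M=\bigoplus_{x}M_{x}$ with $M_{x}\in\mathcal{T}_{x}$ of dimension $n_{x}\delta$ and $\sum_{x}n_{x}=n$. Because the tubes are mutually Hom-orthogonal and each $\mathcal{T}_{x}$ is a Serre subcategory of $\reg(K)$, every indecomposable regular submodule $M'\hookrightarrow M$ factors through a unique $M_{x}$, and the corresponding quotient is automatically regular. Writing $\mathrm{Gr}_{S_{x,k}}(M_{x})$ for the variety of submodules of $M_{x}$ isomorphic to $S_{x,k}$, this yields
\begin{equation*}
(H_{l\delta}P_{(n-l)\delta})(M)=\sum_{x\in\mathbb{P}^{1}}\chi(\mathrm{Gr}_{S_{x,n-l}}(M_{x})).
\end{equation*}
Substituting $k=n-l$ and summing over $l$, the desired identity reduces to the per-tube statement
\begin{equation*}
\sum_{k=1}^{n_{x}}\chi(\mathrm{Gr}_{S_{x,k}}(M_{x}))=n_{x}
\end{equation*}
for every $x$ with $n_{x}>0$, since then $\sum_{x}n_{x}=n=nH_{n\delta}(M)$.

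The per-tube statement is a computation inside $\rep_{0}(C_{1})$; taking $M_{x}$ to correspond to a nilpotent operator $J$ of Jordan type $\lambda\vdash n_{x}$, it boils down to the formula $\chi(\mathrm{Gr}_{S[k]}(M_{x}))=\lambda'_{k}$, the $k$-th part of the conjugate partition, whence $\sum_{k\geq 1}\lambda'_{k}=|\lambda|=n_{x}$. This Euler-characteristic computation is the main obstacle of the argument; one route is to parametrise each indecomposable submodule of length $k$ as a cyclic span $\langle v\rangle=\mathrm{span}(v,Jv,\ldots,J^{k-1}v)$ and then stratify by the socle line $J^{k-1}\langle v\rangle\subset\ker J$, harvesting an affine cell decomposition from the Jordan normal form of $M_{x}$. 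Combined with the tube-by-tube reduction above, this yields the claimed identity; recognisably, at the level of each homogeneous tube it is the classical Newton identity expressing $n h_n$ in terms of $h_l p_{n-l}$ under the dictionary $H\leftrightarrow h$, $P\leftrightarrow p$.
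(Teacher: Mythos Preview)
Your argument is correct and takes a genuinely different route from the paper. One small point to tighten: when you assert the quotient $M/M'$ is ``automatically regular'', what you actually need is that the cokernel \emph{in $\rep(K)$} of a monomorphism between regular modules is again regular. This is the standard fact that $\reg(K)$ is an exact abelian (wide) subcategory of $\rep(K)$, provable for instance via the defect form; it is not quite the same as $\mathcal{T}_x$ being Serre in $\reg(K)$. With that granted, your tube-by-tube reduction goes through, and the formula $\chi(\mathrm{Gr}_{S[k]}(M_x))=\lambda'_k$ does admit the affine paving you sketch, so summing gives $\sum_k \lambda'_k = |\lambda| = n_x$ as required.

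The paper proceeds quite differently: rather than evaluating pointwise, it expresses $\mbf1_{(n-1,n)}\mbf1_{1}$ via Lemma~\ref{lem 6 puzhang}, invokes the bracket formula $P_{n\delta}=[\mbf1_{(n-1,n)},\mbf1_{1}]$ from Proposition~\ref{prop 6 fmv kronecker}, and then manipulates algebraically inside $\mathcal{H}(K)$; the crux is that a certain remainder term $Y$ must vanish because its support would simultaneously have to consist of modules with a nonzero preinjective summand and of purely regular modules. Your approach is more geometric and makes the symmetric-function content transparent---indeed, extending the dictionary of Lemma~\ref{lem 6 symm funct} by $\mbf1_{S_{x,n}}\leftrightarrow p_n$, your per-tube identity is literally Newton's formula $nh_n=\sum_{l}h_l p_{n-l}$. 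The paper's approach, by contrast, stays entirely within the Hall-algebra and Lie-algebra machinery already set up and avoids any explicit Euler-characteristic computation in a tube.
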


\begin{proof}
By Lemma \ref{lem 6 puzhang} we have
\begin{equation*}
\begin{split}
\mbf1_{(n-1,n)}\mbf1_{1}&=
\mbf1_{2}^{(n)}\mbf1_{1}^{(n-1)}\mbf1_{1}-(\sum_{l=1}^{n-1}H_{l\delta}\mbf1_{(n-l-1,n-l)}+\sum_{P,l,I}\mbf1_{P}H_{l\delta}\mbf1_{I})\mbf1_{1}\\
&=n\mbf1_{2}^{(n)}\mbf1_{1}^{(n)}-\sum_{l=1}^{n-1}H_{l\delta}\mbf1_{(n-l-1,n-l)}\mbf1_{1}-\sum_{P,l,I}\mbf1_{P}H_{l\delta}\mbf1_{I}\mbf1_{1}\\
&=nH_{n\delta}+X,
\end{split}
\end{equation*}
where
$$X=n\sum_{P,l,I}\mbf1_{P}H_{l\delta}\mbf1_{I}-\sum_{l=1}^{n-1}H_{l\delta}\mbf1_{(n-l-1,n-l)}\mbf1_{1}-\sum_{P,l,I}\mbf1_{P}H_{l\delta}\mbf1_{I}\mbf1_{1}.$$
and in the above formula the last term sums over all non-zero
preprojective and preinective modules $P,I$ and $1<l<n-1$ such that
$\dimv P+\dimv I+(l,l)=(n-1,n)$.

Then by Proposition \ref{prop 6 fmv kronecker} we have
$$P_{n\delta}=\mbf1_{(n-1,n)}\mbf1_{1}-\mbf1_{1}\mbf1_{(n-1,n)}=nH_{n\delta}+X-\mbf1_{1}\mbf1_{(n-1,n)}.$$

Now we only need to prove
$$X-\mbf1_{1}\mbf1_{(n-1,n)}=-\sum_{l=1}^{n-1}H_{l\delta}P_{(n-l)\delta}.$$

In fact, in the above formula, the left hand side is
\begin{equation*}
\begin{split}
&n\sum_{P,l,I}\mbf1_{P}H_{l\delta}\mbf1_{I}-\sum_{l=1}^{n-1}H_{l\delta}\mbf1_{(n-l-1,n-l)}\mbf1_{1}-\sum_{P,l,I}\mbf1_{P}H_{l\delta}\mbf1_{I}\mbf1_{1}-\mbf1_{1}\mbf1_{(n-1,n)}\\
&=-\sum_{l=1}^{n-1}H_{l\delta}(\mbf1_{(n-l-1,n-l)}\mbf1_{1}-\mbf1_{1}\mbf1_{(n-l-1,n-l)})+Y\\
&=-\sum_{l=1}^{n-1}H_{l\delta}P_{(n-l)\delta}+Y,
\end{split}
\end{equation*}
where
$$Y=n\sum_{P,l,I}\mbf1_{P}H_{l\delta}\mbf1_{I}-\sum_{P,l,I}\mbf1_{P}H_{l\delta}\mbf1_{I}\mbf1_{1}-\mbf1_{1}\mbf1_{(n-1,n)}-\sum_{l=1}^{n-1}H_{l\delta}\mbf1_{1}\mbf1_{(n-l-1,n-l)}.$$

Thus it remains to prove $Y=0$. If $Y\neq 0$, it is easy to see that
any module $M$ in the support of $Y$ must have a non-zero
preinjective summand. But on the other hand,
$$Y=\sum_{l=0}^{n-1}H_{l\delta}P_{(n-l)\delta}-nH_{n\delta}$$
whose support contains only regular modules, which is a
contradiction.
\end{proof}

From this lemma we also know that
$H_{n\delta}H_{m\delta}=H_{m\delta}H_{n\delta}$ for any
$n,m\in\mathbb{N}$.

\subsection{The subalgebra
$\mathcal{C}_{\mathbb{Z}}(K)^{reg}$}\label{6 kronecker regular}

Let $\mathcal{C}_{\mathbb{Z}}(K)^{reg}$ (resp.
$\mathcal{C}(K)^{reg}$) be the $\mathbb{Z}$-subalgebra (resp.
$\mathbb{C}$-subalgebra) of $\mathcal{C}(Q)$ generated by
$\{H_{n\delta}|n\in\mathbb{N}\}$.

For a positive integer $n$, let $\mathbf{P}(n)$ be the set of all
partitions of $n$. For any $\lambda\in\mathbf{P}(n)$ we also denote
by $\lambda\vdash n$ and write $|\lambda|=n$. For $n=0$, we set
$\mathbf{P}(0)=\{0\}$.

For any
$\omega=(\omega_{1}\geq\omega_{2}\geq\cdots\geq\omega_{t})\vdash n$,
we define
$$H_{\omega\delta}=H_{\omega_{1}\delta}H_{\omega_{2}\delta}\cdots H_{\omega_{t}\delta}.$$
The following lemma is obvious.

\begin{lem}\label{lem 6 kronecker basis H}
$$\mathcal{C}(K)^{reg}\simeq\mathbb{C}[H_{\delta},H_{2\delta},\cdots,H_{n\delta},\cdots],$$
$$\mathcal{C}_{\mathbb{Z}}(K)^{reg}\simeq\mathbb{Z}[H_{\delta},H_{2\delta},\cdots,H_{n\delta},\cdots].$$

And the set $\{H_{\omega\delta}|\omega\vdash n,n\in\mathbb{N}\}$ is
a $\mathbb{Z}$-basis of $\mathcal{C}_{\mathbb{Z}}(K)^{reg}$ and a
$\mathbb{C}$-basis of $\mathcal{C}(K)^{reg}$.
\end{lem}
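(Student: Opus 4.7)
The key idea is to transfer the algebraic independence of the $P_{n\delta}$ (which comes from PBW applied to $U(\mathfrak{n}^{+}(K))\cong\mathcal{C}(K)$) to the $H_{n\delta}$ via the Newton-type identity of Lemma \ref{lem 6 relation of P and H}. First I would note that the $H_{n\delta}$ mutually commute, as already observed at the end of \ref{6 the function H}. Consequently the subalgebras $\mathcal{C}(K)^{reg}$ and $\mathcal{C}_{\mathbb{Z}}(K)^{reg}$ are commutative, and the monomials $\{H_{\omega\delta}\mid\omega\vdash n,\ n\in\mathbb{N}\}$ span them over $\mathbb{C}$ and $\mathbb{Z}$ respectively; only linear independence remains to be shown.

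Next, by Proposition \ref{prop 6 fmv kronecker} the elements $\{P_{k\delta}\}_{k\geq 1}$ form part of a $\mathbb{C}$-basis of $\mathfrak{n}^{+}(K)$ and commute pairwise inside this Lie algebra. Via the Ringel-Green isomorphism (Theorem \ref{thm 3 Ringel-Green}) we have $\mathcal{C}(K)\cong U(\mathfrak{n}^{+}(K))$, so PBW shows that the ordered monomials $P_{\omega_{1}\delta}P_{\omega_{2}\delta}\cdots P_{\omega_{t}\delta}$ for partitions $\omega$ are part of a basis of $\mathcal{C}(K)$, and in particular are $\mathbb{C}$-linearly independent. Hence the $P_{k\delta}$ are algebraically independent, and together they generate a polynomial subalgebra $\mathbb{C}[P_{\delta},P_{2\delta},\dots]$ of $\mathcal{C}(K)$.

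I would then invert Newton's identity from Lemma \ref{lem 6 relation of P and H} by triangular induction (matching verbatim the classical expression of complete symmetric functions in power sums), which writes each $H_{n\delta}$ as a $\mathbb{Q}$-polynomial in $P_{\delta},\dots,P_{n\delta}$ with nonzero contribution from $P_{n\delta}/n$, and conversely each $P_{n\delta}$ as a $\mathbb{Q}$-polynomial in $H_{\delta},\dots,H_{n\delta}$. Hence the two $\mathbb{C}$-subalgebras coincide, and the $H_{k\delta}$ inherit algebraic independence from the $P_{k\delta}$. This yields $\mathcal{C}(K)^{reg}\cong\mathbb{C}[H_{\delta},H_{2\delta},\dots]$ together with the claimed $\mathbb{C}$-basis.

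For the integral statement, Corollary \ref{cor 6 H integral} ensures $H_{n\delta}\in\mathcal{C}_{\mathbb{Z}}(K)$ for every $n$, so by definition $\mathcal{C}_{\mathbb{Z}}(K)^{reg}=\mathbb{Z}[H_{\delta},H_{2\delta},\dots]$. The monomials $H_{\omega\delta}$ manifestly span this $\mathbb{Z}$-algebra, and they are already $\mathbb{C}$-linearly independent, hence \emph{a fortiori} $\mathbb{Z}$-linearly independent. I do not foresee a genuine obstacle here; the only subtle point is recognizing that Newton's identity provides a \emph{rational} triangular change of generators, so the transfer of algebraic independence from the $P$-family to the $H$-family is a purely linear-algebraic fact rather than an integrality question.
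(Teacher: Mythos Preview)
Your argument is correct. The paper itself gives no proof at all---it simply declares the lemma ``obvious'' and moves on---so your proposal supplies the justification the authors omit. The route you take (PBW for the $P_{k\delta}$ via Proposition~\ref{prop 6 fmv kronecker} and Theorem~\ref{thm 3 Ringel-Green}, then transferring algebraic independence to the $H_{k\delta}$ through the Newton-type identity of Lemma~\ref{lem 6 relation of P and H}) is exactly the natural way to make the claim rigorous, and indeed the same transition from $P_{\omega\delta}$ to $H_{\omega\delta}$ reappears later in the proof of Proposition~\ref{prop 6 basis kronecker}. Nothing is missing.
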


From this lemma we know that $\mathcal{C}(K)^{reg}$ (resp.
$\mathcal{C}_{\mathbb{Z}}(K)^{reg}$) is naturally
$\mathbb{N}$-graded, namely
$$\mathcal{C}(K)^{reg}=\oplus_{n\in\mathbb{N}}\mathcal{C}(K)^{reg}_{n};\ \mathcal{C}_{\mathbb{Z}}(K)^{reg}=\oplus_{n\in\mathbb{N}}\mathcal{C}_{\mathbb{Z}}(K)^{reg}_{n},$$
where $\mathcal{C}(K)^{reg}_{n}$ (resp.
$\mathcal{C}_{\mathbb{Z}}(K)^{reg}_{n}$) is the
$\mathbb{C}$-subspace (resp. free $\mathbb{Z}$-submodule) generated
by $\{H_{\omega\delta}|\omega\vdash n\}$. Equivalently,
$\mathcal{C}(K)^{reg}_{n}$ (resp.
$\mathcal{C}_{\mathbb{Z}}(K)^{reg}_{n}$) is the
$\mathbb{C}$-subspace (resp. free $\mathbb{Z}$-submodule) generated
by constructible functions in $\mathcal{C}(K)^{reg}$ whose supports
are contained in $\mathbf{E}_{n\delta}$.

Then we know that the dimension of $\mathcal{C}(K)^{reg}_{n}$ (or
the rank of $\mathcal{C}_{\mathbb{Z}}(K)^{reg}_{n}$) is
$|\{\omega|\omega\vdash n\}|,$ which is a finite number.

\subsection{The functions $M_{\omega\delta}$ and
$E_{n\delta}$}\label{6 the function M and E} For any $n\geq 1$ and
$\omega=(\omega_{1}\geq\omega_{2}\geq\cdots\geq\omega_{t})\vdash n$,
let $\mathcal{S}_{\omega}$ be the constructible subset of
$\mathbf{E}_{n\delta}$ consisting of regular modules $R\simeq
R_{1}\oplus R_{2}\oplus\cdots\oplus R_{s}$ with $\dimv
R_{i}=\omega_{i}$ and $R_{i}$ indecomposable for all $i$. We define
$M_{\omega\delta}$ to be the characteristic function of the set
$\mathcal{S}_{\omega}$. We also set $M_{0\delta}=1$. By definition
we have

\begin{lem}\label{lem 6 relation of H and M}
For any $n\in\mathbb{N}$,
$$H_{n\delta}=\sum_{\omega\vdash n}M_{\omega\delta}.$$
\end{lem}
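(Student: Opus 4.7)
The plan is to prove the identity by showing that the supports of the characteristic functions partition correctly. Both $H_{n\delta}$ and each $M_{\omega\delta}$ are characteristic functions of $G_{n\delta}$-invariant constructible subsets of $\mathbf{E}_{n\delta}$, so it suffices to prove that the support of $H_{n\delta}$ is the disjoint union of the sets $\mathcal{S}_{\omega}$ for $\omega\vdash n$.

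First I would appeal to Krull--Schmidt: any regular module $R$ with $\dimv R = n\delta$ decomposes uniquely (up to order and isomorphism) as $R \simeq R_{1}\oplus\cdots\oplus R_{s}$ with each $R_{i}$ indecomposable regular. By the description of $\reg(K)$ in \ref{6 rep kronecker}, each homogeneous tube $\mathcal{T}_{x}$ is equivalent to $\rep_{0}(C_{1})$, whose indecomposables have dimension a positive multiple of $\delta$. Equivalently (using Kac's theorem and $\Delta_{+}^{reg}=\Delta_{+}^{im}=\mathbb{N}_{\geq1}\delta$ for the Kronecker quiver), every indecomposable regular summand has dimension vector $\omega_{i}\delta$ for some $\omega_{i}\geq1$. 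Reordering so that $\omega_{1}\geq\omega_{2}\geq\cdots\geq\omega_{s}$, we obtain a partition $\omega=(\omega_{1},\dots,\omega_{s})\vdash n$, and $R$ lies in $\mathcal{S}_{\omega}$.

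Next I would note that the partition $\omega$ is an invariant of the isomorphism class of $R$, again by Krull--Schmidt uniqueness. Hence distinct partitions $\omega,\omega'\vdash n$ give disjoint sets $\mathcal{S}_{\omega}$ and $\mathcal{S}_{\omega'}$. Conversely, every element of every $\mathcal{S}_{\omega}$ is, by construction, a regular module of dimension $n\delta$, hence lies in the support of $H_{n\delta}$. This gives the set-theoretic equality
\begin{equation*}
\{R\in\mathbf{E}_{n\delta}\mid R\text{ regular}\}=\coprod_{\omega\vdash n}\mathcal{S}_{\omega},
\end{equation*}
from which the desired identity $H_{n\delta}=\sum_{\omega\vdash n}M_{\omega\delta}$ follows immediately by taking characteristic functions.

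There is no substantial obstacle here; the statement is essentially a bookkeeping consequence of Krull--Schmidt together with the fact, special to the Kronecker quiver, that all regular indecomposables have dimension in $\mathbb{N}_{\geq1}\delta$. The only point worth double-checking is that each $\mathcal{S}_{\omega}$ is indeed constructible (so that $M_{\omega\delta}$ makes sense as an element of $\mathcal{H}(K)$), but this is already implicit in the definition given in \ref{6 the function M and E}.
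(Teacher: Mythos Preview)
Your argument is correct and is essentially the same as the paper's, which simply records the lemma as holding ``by definition'' without further elaboration. You have just spelled out what that phrase means: the support of $H_{n\delta}$ is partitioned by the sets $\mathcal{S}_{\omega}$ via Krull--Schmidt and the fact that every indecomposable regular $\mathbb{C}K$-module has dimension vector in $\mathbb{N}_{\geq 1}\delta$.
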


We will prove that the set $\{M_{\omega\delta}|\omega\vdash
n,n\in\mathbb{N}\}$ is also a $\mathbb{Z}$-basis of
$\mathcal{C}_{\mathbb{Z}}(K)^{reg}$.

The idea comes from the theory of symmetric functions. Let's recall
some notations and results in \cite{m}. Let $\Lambda$ be the ring of
symmetric functions in countably many independent variables with
coefficients in $\mathbb{Z}$. For $n\in\mathbb{N},n\geq 1$, denote
by $h_{n}$ (resp. $e_{n}$) the $n$th complete symmetric function
(resp. elementary symmetric function). We know that
$$\Lambda\simeq\mathbb{Z}[h_{1},h_{2},\cdots,h_{n},\cdots]\simeq\mathbb{Z}[e_{1},e_{2}\cdots,e_{n},\cdots]$$

Now we come back to $\mathcal{C}_{\mathbb{Z}}(K)^{reg}$. We use the
notations in \ref{4 tame quiver}. For each $x\in\mathbb{P}^{1}$,
denote by $h_{n,x}$ the characteristic function of all modules in
$\mathcal{T}_{x}$ with dimension vector $n\delta$, and let $e_{n,x}$
be the characteristic function of the module $nS_{x,1}$. Note that
$S_{x,1}$ is the unique quasi-simple module in $\mathcal{T}_{x}$.
Let $\mathcal{H}(\mathcal{T}_{x})$ be the subalgebra of
$\mathcal{H}(Q)$ generated by all characteristic functions
$\mbf1_{M}$ with $M\in\mathcal{T}_{x}$. The following lemma also
comes from \cite{m}:

\begin{lem}\label{lem 6 symm funct}
For any $x\in\mathbb{P}^{1}$, there exists an isomorphism
$$\psi_{x}:\mathcal{H}(\mathcal{T}_{x})\xrightarrow{\sim}\Lambda$$
where $\psi_{x}(h_{n,x})=h_{n}$ and $\psi_{x}(e_{n,x})=e_{n}$ for
any $n\geq 1$.
\end{lem}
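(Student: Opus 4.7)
The plan is to transfer the problem to $\rep_{0}(C_{1})$ and invoke the classical identification of its Hall algebra with the ring of symmetric functions. Since $F_{x}:\rep_{0}(C_{1})\xrightarrow{\sim}\mathcal{T}_{x}$ is an equivalence of abelian categories and $\mathcal{T}_{x}$ is extension-closed in $\rep(Q)$, $F_{x}$ induces an isomorphism of Hall algebras $\mathcal{H}(\rep_{0}(C_{1}))\xrightarrow{\sim}\mathcal{H}(\mathcal{T}_{x})$: on characteristic functions of orbits $\mbf1_{M}\mapsto \mbf1_{F_{x}(M)}$, and since Hall multiplication counts Euler characteristics of extension varieties, which depend only on the ambient abelian structure, this is an algebra map. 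Under this identification $h_{n,x}$ (resp.\ $e_{n,x}$) corresponds to the analogous function $h_{n}$ on all modules of dimension $n$ (resp.\ $e_{n}$ on $nS$) in $\rep_{0}(C_{1})$. So it suffices to construct $\psi:\mathcal{H}(\rep_{0}(C_{1}))\xrightarrow{\sim}\Lambda$ with $\psi(h_{n})=h_{n}$, $\psi(e_{n})=e_{n}$.

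First I would verify that $\mathcal{H}(\rep_{0}(C_{1}))$ is commutative: isomorphism classes of objects in $\rep_{0}(C_{1})$ are indexed by partitions $\lambda$, and because the Jordan quiver is symmetric the extension varieties $\mathcal{F}(M_{\lambda},M_{\mu};M_{\nu})$ and $\mathcal{F}(M_{\mu},M_{\lambda};M_{\nu})$ have equal Euler characteristics (this is the $q=1$ specialization of the classical symmetry of the Hall numbers for the Jordan quiver). Hence the basis $\{\mbf1_{M_{\lambda}}\}_{\lambda}$ multiplies commutatively, and in particular $\{e_{n}\}_{n\geq 1}$ and $\{h_{n}\}_{n\geq 1}$ each generate commutative subalgebras.

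Next I would show that $e_{1},e_{2},\ldots$ are algebraically independent generators. For this, one computes $e_{\lambda}:=e_{\lambda_{1}}e_{\lambda_{2}}\cdots$ and shows by induction on the dominance order that each $e_{\lambda}$ decomposes as $\mbf1_{M_{\lambda'}}$ plus a $\mathbb{Z}$-combination of $\mbf1_{M_{\mu}}$ with $\mu \vartriangleright \lambda'$; this triangularity, combined with the fact that $\{\mbf1_{M_{\lambda}}\}$ is a basis of the graded piece indexed by $|\lambda|$, forces $\{e_{\lambda}\}_{\lambda}$ to be a basis and hence gives $\mathcal{H}(\rep_{0}(C_{1}))\simeq\mathbb{Z}[e_{1},e_{2},\ldots]$. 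The same argument with the opposite ordering (or by dualizing) shows that $\{h_{n}\}_{n\geq 1}$ are polynomial generators. We then define $\psi$ by $\psi(e_{n})=e_{n}$, which is a ring isomorphism onto $\Lambda=\mathbb{Z}[e_{1},e_{2},\ldots]$, and it remains to verify $\psi(h_{n})=h_{n}$.

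For the last compatibility I would establish in $\mathcal{H}(\rep_{0}(C_{1}))$ the Newton--Girard type identity
\begin{equation*}
\sum_{k=0}^{n}(-1)^{k} e_{k}\, h_{n-k} = 0 \qquad (n\geq 1),
\end{equation*}
by a direct Euler-characteristic computation: the product $e_{k}h_{n-k}$ evaluated at a module $M_{\nu}$ counts, with sign, the Euler characteristic of the variety of submodules $L\subset M_{\nu}$ with $L$ of dimension $n-k$ and $M_{\nu}/L$ semisimple of dimension $k$, and the alternating sum collapses by a standard stratification/Jordan block argument (this is precisely the $q=1$ case of the identity underlying the relation between $e_{n}$ and $h_{n}$ in Macdonald's treatment). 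Since the same identity defines $h_{n}$ from the $e_{n}$ in $\Lambda$, the map $\psi$ must send $h_{n}$ to $h_{n}$, completing the proof.

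The main obstacle is the triangular decomposition $e_{\lambda}=\mbf1_{M_{\lambda'}}+(\text{higher terms})$ and the Newton-type identity, both of which require honest Euler-characteristic computations on the varieties of nilpotent Jordan blocks and their flags; these are classical (cf.\ \cite{m}), but translating them from the $\mathbb{F}_{q}$-counting setup of \cite{m} to the $\chi$-version of \ref{3 Hall alg} is where all the work lies.
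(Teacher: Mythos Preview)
The paper does not actually give a proof of this lemma: it is stated with the remark ``the following lemma also comes from \cite{m}'' and is then used as a black box (notably in the proof of Lemma~\ref{lem 6 kronecker basis E}, where the identity $H_{x}(t)E_{x}(-t)=1$ is deduced from it together with \cite[I.2]{m}). So there is nothing to compare at the level of the paper's own argument; your proposal is in effect an outline of what the cited proof from \cite{m} amounts to in the Euler-characteristic setting, and as such it is reasonable and essentially correct.

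One small correction: in your triangularity statement for $e_{\lambda}$ you have the dominance inequality pointing the wrong way. Already for $\lambda=(1,1)$ one has $e_{1}^{2}=\mbf1_{M_{(2)}}+2\,\mbf1_{M_{(1,1)}}$, so the extra term is indexed by $(1,1)<(2)=\lambda'$, not by partitions $\mu\vartriangleright\lambda'$. The correct form is
\[
e_{\lambda}=\mbf1_{M_{\lambda'}}+\sum_{\mu<\lambda'} a_{\lambda'\mu}\,\mbf1_{M_{\mu}},
\]
exactly parallel to Lemma~\ref{lem 6 relation of E and M}. This does not affect your conclusion (unitriangularity in either direction yields a basis), but you should fix the sign of the inequality and, correspondingly, drop the phrase ``higher terms'' in your last paragraph.
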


For any $n\geq 1$, let $E_{n\delta}$ be the characteristic function
of the set $\mathcal{S}_{(1^{n})}$ where
$(1^{n})=(1,1,\cdots,1)\vdash n$. So
$E_{n\delta}=M_{(1^{n})\delta}$. For convenience, set
$E_{0\delta}=1$. We also define
$E_{\omega\delta}=E_{\omega_{1}\delta}E_{\omega_{2}\delta}\cdots
E_{\omega_{t}\delta}$ for
$\omega=(\omega_{1}\geq\omega_{2}\geq\cdots\geq\omega_{t})\vdash n$.

\begin{lem}\label{lem 6 kronecker basis E}
The set $\{E_{\omega\delta}|\omega\vdash n,n\in\mathbb{N}\}$ is a
$\mathbb{Z}$-basis of $\mathcal{C}_{\mathbb{Z}}(K)^{reg}$ and a
$\mathbb{C}$-basis of $\mathcal{C}(K)^{reg}$.
\end{lem}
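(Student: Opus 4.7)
The plan is to transfer the classical fact that $\{e_{\lambda}\}$ is a $\mathbb{Z}$-basis of the ring of symmetric functions $\Lambda$ to $\mathcal{C}_{\mathbb{Z}}(K)^{reg}$, via a global Newton-type identity linking the $H$'s and the $E$'s. The key step is to prove
\begin{equation*}
\sum_{k=0}^{n}(-1)^{k}H_{(n-k)\delta}\,E_{k\delta}=0 \qquad \text{for all } n\geq 1. \quad(\ast)
\end{equation*}

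I would verify $(\ast)$ pointwise. Fix a regular $R\in\mathbf{E}_{n\delta}$ and, using Lemma \ref{lem 4 regular component}, decompose it along tubes as $R=\bigoplus_{x}R_{x}$ with $R_{x}\in\mathcal{T}_{x}$ of dimension $n_{x}\delta$ (only finitely many non-zero, $\sum n_{x}=n$). Because the $\mathcal{T}_{x}$ are orthogonal abelian subcategories, every submodule of $R$ splits compatibly as $R'=\bigoplus_{x}R'_{x}$ with $R'_{x}\subset R_{x}$, and quasi-simples live in a single tube. Hence the variety $\mathcal{F}(H_{(n-k)\delta},E_{k\delta};R)$ is the disjoint union, over tuples $(k_{x})$ with $0\leq k_{x}\leq n_{x}$ and $\sum k_{x}=k$, of the products $\prod_{x}\mathcal{F}(h_{n_{x}-k_{x},x},e_{k_{x},x};R_{x})$. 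Taking Euler characteristics, summing over $k$ with signs, and factoring yields
\begin{equation*}
\Bigl(\sum_{k=0}^{n}(-1)^{k}H_{(n-k)\delta}E_{k\delta}\Bigr)(R)=\prod_{x}\Bigl(\sum_{k_{x}=0}^{n_{x}}(-1)^{k_{x}}h_{n_{x}-k_{x},x}\,e_{k_{x},x}\Bigr)(R_{x}).
\end{equation*}
Under the isomorphism $\psi_{x}:\mathcal{H}(\mathcal{T}_{x})\xrightarrow{\sim}\Lambda$ of Lemma \ref{lem 6 symm funct}, each inner sum is the image of the classical Newton identity $\sum_{j=0}^{m}(-1)^{j}h_{m-j}e_{j}=0$ for $m\geq 1$, hence vanishes whenever $n_{x}\geq 1$; since $\sum n_{x}=n\geq 1$, at least one factor is zero and $(\ast)$ follows.

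With $(\ast)$ in hand, rewriting it as
\begin{equation*}
E_{n\delta}=\sum_{k=1}^{n}(-1)^{k-1}H_{k\delta}\,E_{(n-k)\delta}
\end{equation*}
and using the base case $E_{\delta}=H_{\delta}$ (every indecomposable regular module of dimension $\delta$ is quasi-simple), induction on $n$ shows that each $E_{n\delta}$ lies in $\mathcal{C}_{\mathbb{Z}}(K)^{reg}$ and is an integer polynomial in $H_{\delta},\dots,H_{n\delta}$; inverting the same recursion expresses each $H_{n\delta}$ as an integer polynomial in the $E_{m\delta}$'s. Combined with Lemma \ref{lem 6 kronecker basis H}, this identifies $\mathcal{C}_{\mathbb{Z}}(K)^{reg}$ with $\Lambda$ via $H_{n\delta}\leftrightarrow h_{n}$ and $E_{n\delta}\leftrightarrow e_{n}$, so the $\mathbb{Z}$-basis $\{e_{\lambda}\}$ of $\Lambda$ transfers to the desired $\mathbb{Z}$-basis $\{E_{\omega\delta}\mid \omega\vdash n,\,n\in\mathbb{N}\}$ of $\mathcal{C}_{\mathbb{Z}}(K)^{reg}$, which is then automatically a $\mathbb{C}$-basis of $\mathcal{C}(K)^{reg}$. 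The hard part is the verification of $(\ast)$: the index set $\mathbb{P}^{1}$ is infinite, but any fixed $R$ touches only finitely many tubes, and that is what makes the factorization rigorous and lets the local Newton identities do all the work.
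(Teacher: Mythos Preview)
Your proof is correct and follows essentially the same route as the paper. Both arguments establish the global Newton identity $\sum_{k=0}^{n}(-1)^{k}H_{(n-k)\delta}E_{k\delta}=0$ by factorizing along tubes and invoking the local identity $H_{x}(t)E_{x}(-t)=1$ via Lemma~\ref{lem 6 symm funct}; the paper packages this as the generating-function equality $H(t)E(-t)=\prod_{x}H_{x}(t)E_{x}(-t)=1$, while you unpack it as a pointwise computation at a fixed regular module $R$, which has the virtue of making explicit why the infinite product over $\mathbb{P}^{1}$ causes no trouble.
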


\begin{proof}
First it is easy to see that the elements in the set
$\{E_{\omega\delta}|\omega\vdash n,n\in\mathbb{N}\}$ are
$\mathbb{Z}$-linear independent.

Let $E(t)=1+\sum_{n\geq 1}E_{n\delta}t^{n}$, $H(t)=1+\sum_{n\geq
1}H_{n\delta}t^{n}$ be the generating functions. Also for each
$x\in\mathbb{P}^{1}$ let $E_{x}(t)=1+\sum_{n\geq 1}e_{n,x}t^{n}$ and
$H_{x}(t)=1+\sum_{n\geq 1}h_{n,x}t^{n}$.

By the definitions we can see that
$$E(t)=\prod_{x\in\mathbb{P}^{1}}E_{x}(t),\ H(t)=\prod_{x\in\mathbb{P}^{1}}H_{x}(t).$$

By Lemma \ref{lem 6 symm funct} and results in \cite{m} (section
I.2), we have $H_{x}(t)E_{x}(-t)=1$ for any $x\in\mathbb{P}^{1}$.
Thus
$$H(t)E(-t)=1.$$
Equivalently, we have
$$\sum_{k=0}^{n}(-1)^{k}E_{k\delta}H_{(n-k)\delta}=0$$
for all $n\geq 1$.

Now by induction we can see that for any $n$, $H_{n\delta}$ is in
the $\mathbb{Z}$-span of $\{E_{\omega\delta}|\omega\vdash n\}$ and
vice versa. Since the set $\{H_{\omega\delta}|\omega\vdash n\}$ is a
$\mathbb{Z}$-basis of $\mathcal{C}_{\mathbb{Z}}(K)^{reg}_{n}$, we
see that $\{E_{\omega\delta}|\omega\vdash n\}$ is also a
$\mathbb{Z}$-basis of $\mathcal{C}_{\mathbb{Z}}(K)^{reg}_{n}$. Hence
the lemma holds.
\end{proof}

For any partition $\lambda\vdash n$, let $\lambda'$ be the
\textit{conjugate} of $\lambda$. By definition $\lambda'\vdash n$
and the Young diagram of $\lambda'$ is the transpose of the one of
$\lambda$. Recall that for any positive integer $n$, the
\textit{dominance order} on the set $\mathbf{P}(n)$ is defined as
follows: $\lambda\leq\mu$ if and only if
$\lambda_{1}+\cdots+\lambda_{i}\leq\mu_{1}+\cdots+\mu_{i}$ for all
$i\geq 1$.

\begin{lem}\label{lem 6 relation of E and M}
For any
$\omega=(\omega_{1}\geq\omega_{2}\geq\cdots\geq\omega_{t})\vdash n$,
we have
$$E_{\omega\delta}=M_{\omega'\delta}+\sum_{\mu<\lambda'}a_{\omega'\mu}M_{\mu\delta},$$
where $a_{\omega'\mu}\in\mathbb{Z}$.
\end{lem}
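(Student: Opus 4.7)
The plan is to recognise this expansion as the Hall-algebra shadow of the classical identity expressing elementary symmetric functions in terms of monomial symmetric functions, and to transport the latter to $\mathcal{C}_{\mathbb{Z}}(K)^{reg}$ via a ring isomorphism with the ring $\Lambda$ of symmetric functions.

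First I would define a $\mathbb{Z}$-algebra isomorphism $\Phi:\Lambda\xrightarrow{\sim}\mathcal{C}_{\mathbb{Z}}(K)^{reg}$ by $h_{n}\mapsto H_{n\delta}$, which is bijective because both sides are polynomial rings on these generators (Lemma \ref{lem 6 kronecker basis H} and $\Lambda=\mathbb{Z}[h_{1},h_{2},\ldots]$). The generating-function identity $H(t)E(-t)=1$ established in the proof of Lemma \ref{lem 6 kronecker basis E}, combined with its classical counterpart in $\Lambda$, yields $\Phi(e_{n})=E_{n\delta}$ and hence $\Phi(e_{\omega})=E_{\omega\delta}$ for every partition $\omega$.

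The heart of the argument is the identification $\Phi(m_{\omega})=M_{\omega\delta}$, which I would establish tube by tube. For each $x\in\mathbb{P}^{1}$, Lemma \ref{lem 6 symm funct} identifies $\mathcal{H}(\mathcal{T}_{x})$ with $\Lambda$ by $e_{n,x}\mapsto e_{n}$ and $h_{n,x}\mapsto h_{n}$, and the standard description of the Hall algebra of the Jordan quiver (cf.\ \cite{m}) forces $\mbf1_{M_{x}(\pi)}\mapsto m_{\pi}$ for every partition $\pi$. Since distinct tubes satisfy $\Hom=\Ext^{1}=0$, Lemma \ref{lem 3 rep direct} gives $\mbf1_{N}\mbf1_{N'}=\mbf1_{N\oplus N'}$ for $N\in\mathcal{T}_{x}$, $N'\in\mathcal{T}_{y}$ with $x\neq y$. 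Matching the factorisations $H(t)=\prod_{x}H_{x}(t)$ and $E(t)=\prod_{x}E_{x}(t)$ with the plethystic identities
$$h_{n}\Bigl(\bigsqcup_{x}X_{x}\Bigr)=[t^{n}]\prod_{x}H_{X_{x}}(t),\quad m_{\omega}\Bigl(\bigsqcup_{x}X_{x}\Bigr)=\sum_{\omega=\sqcup_{x}\alpha^{(x)}}\prod_{x}m_{\alpha^{(x)}}(X_{x})$$
in $\Lambda$, and summing over all tube distributions, one recovers $\Phi(m_{\omega})=M_{\omega\delta}$.

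With both identifications in hand, the lemma follows by applying $\Phi$ to Macdonald's classical formula (\cite{m}, Chapter I, (6.4)):
$$e_{\omega}=m_{\omega'}+\sum_{\mu<\omega'}a_{\omega'\mu}\,m_{\mu},\qquad a_{\omega'\mu}\in\mathbb{Z}_{\geq 0}.$$
The main obstacle will be the assembly $\Phi(m_{\omega})=M_{\omega\delta}$: one must check that each orbit characteristic function $\mbf1_{\oplus_{x}M_{x}(\pi^{(x)})}$ appears with exactly the coefficient predicted by the classical expansion of $m_{\omega}$ on a disjoint union of variable sets. The key combinatorial input is the dominance inequality $\sum_{x}\alpha^{(x)}\geq\bigsqcup_{x}\alpha^{(x)}$ (component-wise sum versus sorted concatenation of partitions), with equality only when at most one $\alpha^{(x)}$ is non-empty; this is what forces the leading term $M_{\omega'\delta}$ to appear with coefficient exactly $1$ and every other contribution to sit at a strictly smaller $\mu<\omega'$.
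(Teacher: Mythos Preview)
Your approach is correct in outline but takes a genuinely different route from the paper. The paper argues directly in the Hall algebra: it observes that $E_{\omega\delta}(N)=\chi(\mathcal{F}(\omega;N))$ where $\mathcal{F}(\omega;N)$ is the variety of filtrations of $N$ with successive quotients direct sums of $\omega_i$ quasi-simples; a support analysis shows $N$ lies in the support only if $N\in\mathcal{S}_\mu$ for some $\mu\leq\omega'$, so the expansion has the claimed triangular shape; the coefficients are Euler characteristics, hence integers; and the leading coefficient is $1$ because for $N\in\mathcal{S}_{\omega'}$ the only admissible filtration is the quasi-radical filtration. No appeal to symmetric-function identities beyond what is already in Lemma~\ref{lem 6 symm funct} is needed.

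Your route---build the isomorphism $\Phi:\Lambda\to\mathcal{C}_{\mathbb{Z}}(K)^{reg}$, identify $\Phi(e_\omega)=E_{\omega\delta}$ and $\Phi(m_\omega)=M_{\omega\delta}$, then invoke Macdonald's classical transition matrix---is more conceptual and explains the result as a shadow of the standard $e\to m$ triangularity. The cost is that the key step $\Phi(m_\omega)=M_{\omega\delta}$ requires, tube by tube, the identification $\psi_x(\mbf1_{M_x(\pi)})=m_\pi$; this is the $t=1$ specialisation of the Hall--Littlewood correspondence in Macdonald, not something the paper states (Lemma~\ref{lem 6 symm funct} only pins down the images of $h_n$ and $e_n$). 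You should cite that explicitly. Also, your final paragraph about the dominance inequality $\sum_x\alpha^{(x)}\geq\sqcup_x\alpha^{(x)}$ is unnecessary and somewhat confused: once $\Phi(m_\omega)=M_{\omega\delta}$ is established as an \emph{equality}, Macdonald's identity gives the lemma immediately, with no further triangularity argument required on your side. The paper's direct argument is shorter and self-contained; yours makes the link to $\Lambda$ transparent but leans on more external machinery.
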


\begin{proof}
Note that $M_{\omega\delta}\in\mathcal{C}(K)^{reg}_{n}$ for any
fixed $\omega\vdash n$. Further, $\{M_{\omega\delta}|\omega\vdash
n\}$ is a linearly independent set. Hence it is a $\mathbb{C}$-basis
of $\mathcal{C}(K)^{reg}_{n}$. So $E_{\omega\delta}$ is a
$\mathbb{C}$-linear combination of $M_{\mu\delta}$, $\mu\vdash n$.

By definition
$$E_{\omega\delta}=E_{\omega_{1}\delta}E_{\omega_{2}\delta}\cdots E_{\omega_{t}\delta}.$$

For any $N\in\reg(K)$, Let $\mathcal{F}(\omega;N)$ be the set of all
fitrations
$$0=N_{t}\subset N_{t-1}\subset\cdots\subset N_{0}=N$$
such that $N_{i-1}/N_{i}$ is isomorphic to a direct sum of
$\omega_{i}$ quasi-simples. So we have
$$E_{\omega\delta}(N)=\chi(\mathcal{F}(\omega;N)).$$

Suppose that
$\omega'=(\omega'_{1},\omega'_{2},\cdots,\omega'_{t'})$. It is not
difficult to see that $N$ is in the support of $E_{\omega}$ if and
only if $N\in\mathcal{S}_{\mu}$ for some $\mu\leq\omega'$. Thus
$$E_{\omega\delta}=\sum_{\mu\leq\omega'}a_{\omega'\mu}M_{\mu\delta}.$$

Choosing any $N_{\mu}\in\mathcal{S}_{\mu}$, we have
$$E_{\omega\delta}(N_{\mu})=\chi(\mathcal{F}(\omega;N_{\mu}))=a_{\omega'\mu}\in\mathbb{Z}.$$

Now it remains to prove $a_{\omega'\omega'}=1$. This is equivalent
to prove that for any $N_{\omega'}\in\mathcal{S}_{\omega'}$,
$\chi(\mathcal{F}(\omega;N_{\omega'})=1$. But the only filtration of
$N_{\omega'}$ in $\mathcal{F}(\omega;N_{\omega'})$ is
$$0=\qrad^{\lambda_{t}}(N)\subset\cdots\subset\qrad^{1}(N)\subset\qrad^{0}(N)=N,$$
where $\qrad$ denote the quasi-radical i.e. the radical in the
subcategory $\reg(K)$. Hence $\mathcal{F}(\omega;N_{\omega'})$ is a
single point and we are done.
\end{proof}

Finally we can prove the following:
\begin{lem}\label{lem 6 kronecker basis M}
The set $\{M_{\omega\delta}|\omega\vdash n,n\in\mathbb{N}\}$ is a
$\mathbb{Z}$-basis of $\mathcal{C}_{\mathbb{Z}}(K)^{reg}$ and a
$\mathbb{C}$-basis of $\mathcal{C}(K)^{reg}$.
\end{lem}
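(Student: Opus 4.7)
The strategy is to exploit the unitriangular change-of-basis supplied by the preceding Lemma \ref{lem 6 relation of E and M} and combine it with Lemma \ref{lem 6 kronecker basis E}. Concretely, for each $n \geq 1$, both $\mathcal{C}_{\mathbb{Z}}(K)^{reg}_{n}$ and $\mathcal{C}(K)^{reg}_{n}$ are finitely generated of rank/dimension equal to $|\mathbf{P}(n)|$, and the previous lemma tells me that expanding the $E$-basis in the would-be $M$-basis produces an integer matrix which is unitriangular with respect to the dominance order (after reindexing by conjugate partitions).

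In more detail, first I would recall the standard fact that the conjugation map $\omega \mapsto \omega'$ is an involution on $\mathbf{P}(n)$ which reverses the dominance order: $\mu \leq \nu$ if and only if $\nu' \leq \mu'$. Setting $\omega = \nu'$ in Lemma \ref{lem 6 relation of E and M} then rewrites the relation as
$$E_{\nu'\delta} \;=\; M_{\nu\delta} \;+\; \sum_{\mu < \nu} a_{\nu'\mu}\, M_{\mu\delta},$$
with $a_{\nu'\mu} \in \mathbb{Z}$. Thus, ordering the indexing set $\mathbf{P}(n)$ by any linear extension of the dominance order, the matrix of the family $\{E_{\nu'\delta}\}_{\nu \vdash n}$ expressed in the family $\{M_{\mu\delta}\}_{\mu \vdash n}$ is lower unitriangular with integer entries.

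Second, I would invert this matrix. A lower unitriangular integer matrix is invertible over $\mathbb{Z}$, so each $M_{\nu\delta}$ is expressible as an integer linear combination of $E_{\mu'\delta}$ with $\mu \leq \nu$. By Lemma \ref{lem 6 kronecker basis E}, each $E_{\mu'\delta}$ lies in $\mathcal{C}_{\mathbb{Z}}(K)^{reg}$, so $M_{\nu\delta} \in \mathcal{C}_{\mathbb{Z}}(K)^{reg}$ for every $\nu$. Conversely, the same unitriangular relation writes every $E_{\nu'\delta}$ as an integer combination of $M_{\mu\delta}$'s. This exchanges a $\mathbb{Z}$-basis for a $\mathbb{Z}$-basis: since $\{E_{\omega\delta} \mid \omega \vdash n\}$ is a $\mathbb{Z}$-basis of $\mathcal{C}_{\mathbb{Z}}(K)^{reg}_{n}$, so is $\{M_{\omega\delta} \mid \omega \vdash n\}$.

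Finally, summing over $n \in \mathbb{N}$ and using the grading $\mathcal{C}_{\mathbb{Z}}(K)^{reg} = \bigoplus_{n} \mathcal{C}_{\mathbb{Z}}(K)^{reg}_{n}$ noted in \ref{6 kronecker regular}, I conclude that $\{M_{\omega\delta} \mid \omega \vdash n,\ n \in \mathbb{N}\}$ is a $\mathbb{Z}$-basis of $\mathcal{C}_{\mathbb{Z}}(K)^{reg}$; tensoring with $\mathbb{C}$ immediately gives the $\mathbb{C}$-basis statement for $\mathcal{C}(K)^{reg}$. There is no real obstacle in this argument beyond verifying the order-reversing property of conjugation; the entire content has been concentrated into Lemma \ref{lem 6 relation of E and M}, and what remains is purely formal triangular linear algebra.
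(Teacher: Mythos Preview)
Your proposal is correct and follows essentially the same approach as the paper: both arguments deduce the result from Lemma \ref{lem 6 kronecker basis E} by observing that Lemma \ref{lem 6 relation of E and M} makes the transition matrix between the $E$'s and the $M$'s unitriangular over $\mathbb{Z}$. Your version is slightly more explicit in reindexing via conjugation and invoking the order-reversal of dominance, whereas the paper simply asserts the triangularity; but the substance is identical.
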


\begin{proof}
For any fixed $n\in\mathbb{N}$, $\{E_{\omega\delta}|\omega\vdash
n\}$ is a $\mathbb{Z}$-basis of
$\mathcal{C}_{\mathbb{Z}}(K)^{reg}_{n}$. By the lemma above the
transition matrix from $\{M_{\omega\delta}|\omega\vdash n)\}$ to
$\{E_{\omega\delta}|\omega\vdash n\}$ is upper triangular with 1's
in the diagonal. Thus for $\{M_{\omega\delta}|\omega\vdash n)\}$ is
also a $\mathbb{Z}$-basis of
$\mathcal{C}_{\mathbb{Z}}(K)^{reg}_{n}$. So
$\{M_{\omega\delta}|\omega\vdash n,n\in\mathbb{N}\}$ is a
$\mathbb{Z}$-basis of $\mathcal{C}_{\mathbb{Z}}(K)^{reg}$.
\end{proof}

\subsection{Integral bases of
$\mathcal{C}_{\mathbb{Z}}(K)$}\label{6 basis kronecker}

The main result of this section is the following:
\begin{prop}\label{prop 6 basis kronecker}
The set
$$\{\mbf1_{P}M_{\omega\delta}\mbf1_{I}|P\in\prep(K),I\in\prei(K),\omega\vdash n,n\in\mathbb{N}\}$$
is a $\mathbb{Z}$-basis of the algebra
$\mathcal{C}_{\mathbb{Z}}(K)$.
\end{prop}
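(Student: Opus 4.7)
The plan is to prove the proposition in three stages: verify that each proposed basis element lies in $\mathcal{C}_{\mathbb{Z}}(K)$, identify it as the characteristic function of a constructible subset so that distinct triples have disjoint supports (giving linear independence), and then upgrade the $\mathbb{C}$-basis property via a PBW dimension count to a $\mathbb{Z}$-basis via integer-valuedness.

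First I would check that every $\mbf1_{P}M_{\omega\delta}\mbf1_{I}$ lies in $\mathcal{C}_{\mathbb{Z}}(K)$. Propositions \ref{prop 5 bases of preinj} and \ref{prop 5 bases of preproj} give $\mbf1_{P},\mbf1_{I}\in \mathcal{C}_{\mathbb{Z}}(K)$, while Lemma \ref{lem 6 kronecker basis M} together with Corollary \ref{cor 6 H integral} gives $M_{\omega\delta}\in \mathcal{C}_{\mathbb{Z}}(K)^{reg}\subset \mathcal{C}_{\mathbb{Z}}(K)$.

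The core of the argument is the identification of $\mbf1_{P}M_{\omega\delta}\mbf1_{I}$ with the characteristic function of the constructible set
$$\mathcal{U}_{P,\omega,I}=\{M\in \mathbf{E}^{nil}: M\simeq P\oplus R\oplus I\ \text{for some}\ R\in\mathcal{S}_{\omega}\}.$$
For each fixed $R\in\mathcal{S}_{\omega}$ the three vanishings $\Hom(I,P)=\Hom(I,R)=\Hom(R,P)=0$ and their $\Ext^{1}$-duals from Proposition \ref{prop 4 repdirect} let me apply Lemma \ref{lem 3 rep direct} to the ordered triple $(P,R,I)$, yielding $\mbf1_{P}\mbf1_{R}\mbf1_{I}=\mbf1_{P\oplus R\oplus I}$. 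To pass from this pointwise identity to a global statement about $M_{\omega\delta}$ (which is the characteristic function of an infinite union of orbits, not a finite sum of $\mbf1_{R}$'s) I would compute the convolution directly from the definitions in \ref{3 char function}: for any $M$ in the support the relevant filtration variety $\mathcal{F}(\mathcal{O}_{1},\mathcal{O}_{2};M)$ reduces to a single point at each stage, because the Hom-vanishings force the submodules involved to be unique. Once this description is in hand, Krull--Schmidt together with the fact that every indecomposable regular Kronecker module has dimension vector a positive multiple of $\delta$ implies that distinct triples $(P,\omega,I)$ yield disjoint sets $\mathcal{U}_{P,\omega,I}$. This immediately gives $\mathbb{C}$-linear independence of our set.

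For the spanning statement I would combine Theorem \ref{thm 3 Ringel-Green} and Proposition \ref{prop 6 fmv kronecker}: PBW for $U(\mathfrak{n}^{+}(K))$ applied to the basis of $\mathfrak{n}^{+}(K)$ supplied by Proposition \ref{prop 6 fmv kronecker} shows that $\dim_{\mathbb{C}}\mathcal{C}(K)_{\gamma}$ equals the number of triples $(P,\omega,I)$ with $\dimv P+|\omega|\delta+\dimv I=\gamma$, since monomials in $\{P_{k\delta}\}$ of total weight $n\delta$ are parameterized by partitions of $n$. Together with the linear independence above this shows our set is a $\mathbb{C}$-basis of $\mathcal{C}(K)$. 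To promote it to a $\mathbb{Z}$-basis of $\mathcal{C}_{\mathbb{Z}}(K)$, I would use that every element of $\mathcal{C}_{\mathbb{Z}}(K)$ is integer-valued on $\mathbf{E}^{nil}$ (it is built by iterated Hall convolutions from the integer-valued divided powers $\mbf1_{i}^{(n)}$, and Euler characteristics with compact support are integers): writing $f\in\mathcal{C}_{\mathbb{Z}}(K)$ uniquely as $\sum c_{P,\omega,I}\mbf1_{P}M_{\omega\delta}\mbf1_{I}$ with $c_{P,\omega,I}\in\mathbb{C}$ and evaluating at a representative $M=P\oplus R\oplus I$ of $\mathcal{U}_{P,\omega,I}$, disjointness of supports plus the constant value $1$ of the basis element there give $c_{P,\omega,I}=f(M)\in\mathbb{Z}$.

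The principal obstacle is the second step: because $M_{\omega\delta}$ is not a finite linear combination of orbit functions, one cannot simply distribute the product and appeal to Lemma \ref{lem 3 rep direct} term by term. The argument has to be geometric, using the filtration varieties of \ref{3 char function} together with the vanishings of Proposition \ref{prop 4 repdirect} to obtain both the splitting (so the support is $\mathcal{U}_{P,\omega,I}$) and the uniqueness of the relevant submodules (so the value on the support is exactly $1$).
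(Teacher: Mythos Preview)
Your argument is correct and shares with the paper the key geometric observation that $\mbf1_{P}M_{\omega\delta}\mbf1_{I}$ is the characteristic function of the set $\mathcal{U}_{P,\omega,I}$ (the paper calls it $\mathcal{S}_{P,\omega,I}$) and that these sets are pairwise disjoint. Where you diverge is in how integrality is extracted from this. The paper does not use the global fact that every element of $\mathcal{C}_{\mathbb{Z}}(K)$ is integer-valued; instead it shows that the $\mathbb{Z}$-span of the proposed basis is closed under multiplication by checking, case by case, the commutation relations $\mbf1_{I}\mbf1_{P}$, $M_{\omega\delta}\mbf1_{P}$, $\mbf1_{I}M_{\omega\delta}$, and in each case evaluates at a point of some $\mathcal{U}_{P',\omega',I'}$ to see that the coefficient is a single Euler characteristic. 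Your route is shorter and conceptually cleaner for the Kronecker quiver: once you know the basis elements are characteristic functions with disjoint supports, integer-valuedness of $\mathcal{C}_{\mathbb{Z}}(K)$ finishes the proof in one line, with no need to handle products of generators separately. The paper's longer route, however, buys something you do not get: it produces the explicit commutation relations among $\mbf1_{P}$, $M_{\omega\delta}$, $\mbf1_{I}$, and this is precisely the template reused in Section~\ref{sec 8} for general tame quivers, where the analogues of the $\mathcal{U}_{P,\omega,I}$ are \emph{not} disjoint (the supports of $E_{\pi',j}M_{\lambda\delta}$ for varying $(\pi',\lambda)$ overlap) and your direct evaluation argument would no longer suffice without the inductive refinement carried out in Lemma~\ref{lem 8 comm relation pi omega}.
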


\begin{proof}
First we prove that the above set is a $\mathbb{C}$-basis of the
algebra $\mathcal{C}(K)$.

By Proposition \ref{prop 6 fmv kronecker} and the PBW-basis theorem.
the set
$$\{\mbf1_{P}P_{\omega\delta}\mbf1_{I}|P\in\prep(K),I\in\prei(K),\omega\vdash n,n\in\mathbb{N}\}$$
is a $\mathbb{C}$-basis of $\mathcal{C}(K)$, where
$P_{\omega\delta}=P_{\omega_{1}\delta}\cdots P_{\omega_{t}\delta}$.
But from lemma \ref{lem 6 relation of P and H} we can see that
$\{P_{\omega\delta}|\omega\vdash n, n\in\mathbb{N}\}$ and
$\{H_{\omega\delta}|\omega\vdash n, n\in\mathbb{N}\}$ can be
$\mathbb{C}$-expressed each other (actually the coefficients are in
$\mathbb{Q}$). So the following set
$$\{\mbf1_{P}H_{\omega\delta}\mbf1_{I}|P\in\prep(K),I\in\prei(K),\omega\vdash n,n\in\mathbb{N}\}$$
is a $\mathbb{C}$-basis of $\mathcal{C}(K)$.

By Lemma \ref{lem 6 kronecker basis H} and Lemma \ref{lem 6
kronecker basis M} we can see that the set in the proposition is
also a $\mathbb{C}$-basis of $\mathcal{C}(K)$.

Now consider the $\mathbb{Z}$-subalgebra of $\mathcal{C}(K)$
generated by
$$\{\mbf1_{P},M_{\omega\delta},\mbf1_{I}|P\in\prep(K),I\in\prei(K),\omega\vdash n,n\in\mathbb{N}\}.$$

We claim that this $\mathbb{Z}$-subalgebra is
$\mathcal{C}_{\mathbb{Z}}(K)$. First, by Lemma \ref{lem 5 char funct
of preinj} and Lemma \ref{lem 5 char funct of preproj} the divided
powers $\mbf1_{1}^{(l)}$ and $\mbf1_{2}^{(m)}$, for any
$l,m\in\mathbb{N}$, are contained in the above generators. Second,
from results in section \ref{5 bases of preinj and preproj} and
\ref{6 the function H} we know that the generators above are all in
$\mathcal{C}_{\mathbb{Z}}(K)$. Thus our claim holds.

It remains to prove that any product of the generators is in the
$\mathbb{Z}$-span of the elements in the set. For the case
$\mbf1_{P}\mbf1_{P'}$ with $P,P'\in\prep(K)$ and
$\mbf1_{I}\mbf1_{I'}$ with $I,I'\in\prei(K)$, we have already done
in \ref{5 bases of preinj and preproj}. And Lemma \ref{lem 6
kronecker basis M} shows that for any $\lambda\vdash n,\mu\vdash m$,
$M_{\lambda\delta}M_{\mu\delta}$ also has the desired property.

For any $P\in\prep(K),I\in\prei(K)$, since the set is a
$\mathbb{C}$-basis of $\mathcal{C}(K)$, we have
$$\mbf1_{I}\mbf1_{P}=\sum_{P',\omega,I'} a_{P',\omega,I'}\mbf1_{P'}M_{\omega\delta}\mbf1_{I'},$$
where $a_{P',\omega,I'}\in\mathbb{C}$ and $$\dimv
P'+|\omega|\delta+\dimv I'=\dimv P+\dimv I$$.

We need to prove all the coefficients $a_{P',\omega,I'}$ are
integers. For any $P',\omega,I'$, the function
$\mbf1_{P'}M_{\omega\delta}\mbf1_{I'}$ is the characteristic
function of the following set (recall the definition of
$\mathcal{S}_{\omega}$ in \ref{6 kronecker regular}):
$$\{M\simeq P'\oplus R\oplus I'|R\in\mathcal{S}_{\omega}\}$$

Denote the set by $\mathcal{S}_{P',\omega,I'}$. It is easy to see
that
$\mathcal{S}_{P',\omega,I'}\cap\mathcal{S}_{P'',\mu,I''}\neq\emptyset$
if and only if $P'=P''$, $\omega=\mu$, and $I'=I''$.

Thus for any fixed $P',\omega,I'$, and any module
$N_{P',\omega,I'}\in\mathcal{S}_{P',\omega,I'}$ we have
$$\mbf1_{I}\mbf1_{P}(N_{P',\omega,I'})=a_{P',\omega,I'}.$$

But on the other hand by the definition of the multiplication in the
Hall algebra,
$$\mbf1_{I}\mbf1_{P}(N_{P',\omega,I'})=\chi(\mathcal{F}(I,P;N_{P',\omega,I'})).$$

Hence
$a_{P',\omega,I'}=\chi(\mathcal{F}(I,P;N_{P',\omega,I'}))\in\mathbb{Z}$.

Next we consider $M_{\omega\delta}\mbf1_{P}$ for any $\lambda\vdash
n$ and $P\in\prep(K)$. Since preinjective modules do not occur in
the direct summands of the extension of a regular module by a
preprojective module, so first we have
$$M_{\omega\delta}\mbf1_{P}=\sum_{P',\mu} b_{P',\mu}\mbf1_{P'}M_{\mu\delta},$$
where $b_{P',\mu}\in\mathbb{C}$ and $\dimv P'+|\mu|\delta=\dimv
P+n\delta$.

For any module $N$, let $\mathcal{F}(\omega,P;N)$ be the set
consisting of all submodules $L$ of $N$ such that $L\simeq P$ and
$N/L\in\mathcal{S}_{\omega}$. For any $P'$ and $\mu$, let
$\mathcal{S}_{P',\mu}$ be the set of all modules $N$ such that
$N\simeq P'\oplus R$, $R\in\mathcal{S}_{\mu}$.

By the same argument as in the case $\mbf1_{I}\mbf1_{P}$ we can see
that
$$b_{P',\mu}=\chi(\mathcal{F}(\omega,P;N_{P',\mu}))\in\mathbb{Z},$$
where $N_{P',\mu}$ is a module in $\mathcal{S}_{P',\mu}$.

The case $\mbf1_{I}M_{\omega\delta}$ is completely similar.

Thus the set
$$\{\mbf1_{P}M_{\omega\delta}\mbf1_{I}|P\in\prep(K),I\in\prei(K),\omega\vdash n,n\in\mathbb{N}\}$$
is a $\mathbb{Z}$-basis of $\mathcal{C}_{\mathbb{Z}}(K)$.
\end{proof}

\begin{cor}\label{cor 6 other basis }
The following two sets
$$\{\mbf1_{P}H_{\omega\delta}\mbf1_{I}|P\in\prep(K),I\in\prei(K),\omega\vdash n,n\in\mathbb{N}\},$$
$$\{\mbf1_{P}E_{\omega\delta}\mbf1_{I}|P\in\prep(K),I\in\prei(K),\omega\vdash n,n\in\mathbb{N}\}$$
are also $\mathbb{Z}$-bases of the algebra
$\mathcal{C}_{\mathbb{Z}}(K)$.
\end{cor}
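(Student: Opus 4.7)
The plan is to reduce the corollary to Proposition \ref{prop 6 basis kronecker} by comparing the three candidate generating sets of $\mathcal{C}_{\mathbb{Z}}(K)^{reg}$ inside each graded piece. The key observation is that Lemmas \ref{lem 6 kronecker basis H}, \ref{lem 6 kronecker basis E} and \ref{lem 6 kronecker basis M} already tell us that $\{H_{\omega\delta}\}_{\omega\vdash n}$, $\{E_{\omega\delta}\}_{\omega\vdash n}$ and $\{M_{\omega\delta}\}_{\omega\vdash n}$ are each $\mathbb{Z}$-bases of the free $\mathbb{Z}$-module $\mathcal{C}_{\mathbb{Z}}(K)^{reg}_{n}$. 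Hence the transition matrix between any two of these three bases lies in $GL_{|\mathbf{P}(n)|}(\mathbb{Z})$.

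First I would fix $n\in\mathbb{N}$ and write $H_{\omega\delta}=\sum_{\mu\vdash n}c^{(n)}_{\omega\mu}M_{\mu\delta}$ with $c^{(n)}_{\omega\mu}\in\mathbb{Z}$, and similarly $M_{\mu\delta}=\sum_{\nu\vdash n}d^{(n)}_{\mu\nu}H_{\nu\delta}$ with $d^{(n)}_{\mu\nu}\in\mathbb{Z}$; both statements follow from the fact that $\{H_{\omega\delta}\}$ and $\{M_{\omega\delta}\}$ are $\mathbb{Z}$-bases of the same free $\mathbb{Z}$-module. Multiplying these identities on the left by $\mbf1_{P}$ and on the right by $\mbf1_{I}$ (for arbitrary $P\in\prep(K)$, $I\in\prei(K)$) yields
$$\mbf1_{P}H_{\omega\delta}\mbf1_{I}=\sum_{\mu\vdash n}c^{(n)}_{\omega\mu}\,\mbf1_{P}M_{\mu\delta}\mbf1_{I},\qquad \mbf1_{P}M_{\mu\delta}\mbf1_{I}=\sum_{\nu\vdash n}d^{(n)}_{\mu\nu}\,\mbf1_{P}H_{\nu\delta}\mbf1_{I}.$$
Thus the transition matrix from $\{\mbf1_{P}H_{\omega\delta}\mbf1_{I}\}_{P,I,\omega}$ to the basis $\{\mbf1_{P}M_{\omega\delta}\mbf1_{I}\}_{P,I,\omega}$ of Proposition \ref{prop 6 basis kronecker} is block-diagonal with respect to the decomposition by triples $(P,I,n)$ (only terms with the same $P$, $I$, and $n=|\omega|$ are coupled), and each block is the integer matrix $(c^{(n)}_{\omega\mu})$, whose inverse $(d^{(n)}_{\mu\nu})$ also has integer entries. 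Therefore this transition matrix is an automorphism of the free $\mathbb{Z}$-module $\mathcal{C}_{\mathbb{Z}}(K)$, and it carries the $\mathbb{Z}$-basis of Proposition \ref{prop 6 basis kronecker} to $\{\mbf1_{P}H_{\omega\delta}\mbf1_{I}\}$, proving the first assertion.

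The argument for $\{\mbf1_{P}E_{\omega\delta}\mbf1_{I}\}$ is identical: Lemma \ref{lem 6 kronecker basis E} guarantees that $\{E_{\omega\delta}\}_{\omega\vdash n}$ is a $\mathbb{Z}$-basis of $\mathcal{C}_{\mathbb{Z}}(K)^{reg}_{n}$, so the same block-diagonal change-of-basis reasoning applies. (Alternatively, Lemma \ref{lem 6 relation of E and M} gives a unitriangular integer transition matrix from $\{E_{\omega\delta}\}$ to $\{M_{\omega\delta}\}$ directly, which makes the integrality of both directions manifest.) I do not anticipate any substantial obstacle here, because the difficult analytic content, namely the integrality of the coefficients expressing products of generators in the mixed basis, was already settled in the proof of Proposition \ref{prop 6 basis kronecker}; the only point to verify is the purely linear-algebraic fact that replacing the middle factor by any $\mathbb{Z}$-basis of the regular subalgebra still yields a $\mathbb{Z}$-basis of the full composition algebra.
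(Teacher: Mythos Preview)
Your proposal is correct and follows essentially the same approach as the paper: the paper's proof simply observes that the two candidate sets differ from the basis of Proposition \ref{prop 6 basis kronecker} only in the regular factor, and that by Lemmas \ref{lem 6 kronecker basis H}, \ref{lem 6 kronecker basis E} and \ref{lem 6 kronecker basis M} the three families $\{H_{\omega\delta}\}$, $\{E_{\omega\delta}\}$, $\{M_{\omega\delta}\}$ are mutually $\mathbb{Z}$-expressible. Your write-up is just a more explicit version of this, spelling out the block-diagonal form of the change-of-basis matrix.
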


\begin{proof}
Note that the elements in the above two sets are different from the
one in Proposition \ref{prop 6 basis kronecker} only in the regular
part. However, they can be $\mathbb{Z}$-linear expressed by each
other, see Lemma \ref{lem 6 kronecker basis H}, \ref{lem 6 kronecker
basis E} and \ref{lem 6 kronecker basis M}. So the corollary holds.
\end{proof}

The results above immediately implies that the algebra
$\mathcal{C}_{\mathbb{Z}}(K)$ has an integral triangular
decomposition:

\begin{cor}\label{cor 6 kronecker tri decomp}
$$\mathcal{C}_{\mathbb{Z}}(K)\simeq\mathcal{C}_{\mathbb{Z}}(K)^{prep}\otimes\mathcal{C}_{\mathbb{Z}}(K)^{reg}\otimes\mathcal{C}_{\mathbb{Z}}(K)^{prei}.$$
\end{cor}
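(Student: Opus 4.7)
The plan is to realize the isomorphism as the multiplication map
$$\mu\colon \mathcal{C}_{\mathbb{Z}}(K)^{prep}\otimes_{\mathbb{Z}}\mathcal{C}_{\mathbb{Z}}(K)^{reg}\otimes_{\mathbb{Z}}\mathcal{C}_{\mathbb{Z}}(K)^{prei}\longrightarrow \mathcal{C}_{\mathbb{Z}}(K),\qquad a\otimes b\otimes c\mapsto abc,$$
and to show that it sends a distinguished $\mathbb{Z}$-basis of the triple tensor product bijectively to a distinguished $\mathbb{Z}$-basis of $\mathcal{C}_{\mathbb{Z}}(K)$. Since a $\mathbb{Z}$-linear map between free $\mathbb{Z}$-modules that sends one basis to another is automatically an isomorphism of $\mathbb{Z}$-modules, this reduces the corollary to two bookkeeping inputs.

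First I would assemble the bases of the three factors. Proposition \ref{prop 5 bases of preproj} gives $\{\mbf1_{P}\mid P\in\prep(K)\}$ as a $\mathbb{Z}$-basis of $\mathcal{C}_{\mathbb{Z}}(K)^{prep}$; Proposition \ref{prop 5 bases of preinj} gives $\{\mbf1_{I}\mid I\in\prei(K)\}$ as a $\mathbb{Z}$-basis of $\mathcal{C}_{\mathbb{Z}}(K)^{prei}$; and Lemma \ref{lem 6 kronecker basis M} supplies $\{M_{\omega\delta}\mid \omega\vdash n,\ n\in\mathbb{N}\}$ as a $\mathbb{Z}$-basis of $\mathcal{C}_{\mathbb{Z}}(K)^{reg}$. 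Consequently the tensor product on the left is a free $\mathbb{Z}$-module on the elementary tensors $\mbf1_{P}\otimes M_{\omega\delta}\otimes \mbf1_{I}$, indexed by triples $(P,\omega,I)$.

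Second, I would apply $\mu$ to each such tensor to produce $\mbf1_{P}M_{\omega\delta}\mbf1_{I}$. By Proposition \ref{prop 6 basis kronecker}, the set $\{\mbf1_{P}M_{\omega\delta}\mbf1_{I}\}$ is exactly a $\mathbb{Z}$-basis of $\mathcal{C}_{\mathbb{Z}}(K)$. Thus $\mu$ is a bijection of $\mathbb{Z}$-bases and therefore a $\mathbb{Z}$-module isomorphism, which is the content of the corollary.

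There is no real obstacle beyond what is already done, because the hard work — establishing that every product of generators lies in the $\mathbb{Z}$-span of the $\mbf1_{P}M_{\omega\delta}\mbf1_{I}$ using the integrality of Euler characteristics and the disjointness of supports $\mathcal{S}_{P',\omega,I'}$ — was carried out inside the proof of Proposition \ref{prop 6 basis kronecker}. The only point deserving emphasis is that the symbol $\simeq$ here refers to isomorphism of $\mathbb{Z}$-modules: the tensor product on the left is not being endowed with a ring structure a priori, and upgrading $\mu$ to a homomorphism of algebras would require importing a suitable twisted multiplication from the commutation relations of Proposition \ref{prop 6 fmv kronecker}, which is not asserted in the corollary.
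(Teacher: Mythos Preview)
Your proposal is correct and is precisely the argument the paper has in mind: the corollary is stated without proof, with the remark that ``the results above immediately imply'' it, and those results are exactly Proposition~\ref{prop 5 bases of preproj}, Proposition~\ref{prop 5 bases of preinj}, Lemma~\ref{lem 6 kronecker basis M}, and Proposition~\ref{prop 6 basis kronecker}, combined via the multiplication map just as you describe. Your caveat that the isomorphism is one of $\mathbb{Z}$-modules (not of algebras on the nose) is also well taken.
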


\subsection{Remarks}\label{6 rem} (1). The proofs of lemma \ref{lem 6 puzhang}, \ref{lem 6 relation of
P and H} are similar to the quantum case \cite{z}. However, in our
case the calculation is easier and we have avoid using some
complicated combinatorial formulas in \cite{z}.

(2). The relation between $P_{k\delta}$ and $H_{k\delta}$ given by
lemma \ref{lem 6 relation of P and H} is equivalent to the
following:
$$\sum_{i\geq 0}H_{i\delta}t^{i}=\exp(\sum_{j\geq 1}\frac{P_{j\delta}}{j}t^{j}).$$
This relation also appeared in the basis elements corresponding to
imaginary roots in \cite{gal}. However, the bases we constructed in
\ref{6 basis kronecker} are all different from \cite{gal}.

\section{Integral basis: The case of cyclic quivers}\label{sec 7}
In this section we consider the cyclic quiver $C_{r}$. We will
construct a $\mathbb{Z}$-basis of the integral composition algebra
$\mathcal{C}_{\mathbb{Z}}(C_{r})$. We use the notations in \ref{4
cyclic quiver}.

\subsection{Generic extensions}\label{7 generic ext}
Given any two modules $M,N$ in $\rep_{0}(C_{r})$, there exists a
unique (up to isomorphism) extension $L$ of $M$ by $N$ with maximal
$\dim \mathcal{O}_{L}$ (or equivalently, minimal $\dim\End(L)$), see
\cite{m}. This extension module $L$ is called the \textit{generic
extension} of $M$ by $N$, denoted by $L=M\diamond N$. We can define
$[M]\diamond[N]=[M\diamond N]$ then it is known that the operator
$\diamond$ is associative and $(\mathcal{I}(C_{r}),\diamond)$ is a
monoid with identity $[0]$.

An $n$-tuple of partitions
$\pi=(\pi^{(1)},\pi^{(2)},\cdots,\pi^{(n)})$ in $\Pi$ is called
\textit{aperiodic} or \textit{separated} if for each $l\geq 1$ there
is some $i=i(l)\in I$ such that $\pi_{j}^{(i)}\neq l$ for all $j\geq
1$. We denote by $\Pi^{a}$ the set of aperiodic $n$-tuples of
partitions. A module $M$ in $\rep_{0}(C_{r})$ is called
\textit{aperiodic} if $M\simeq M(\pi)$ for some $\pi\in\Pi^{a}$. For
any dimension vector $\alpha\in\mathbb{N}[I]$, set
$\Pi_{\alpha}=\{\lambda\in\Pi|\dimv M(\lambda)=\alpha\}$ and
$\Pi_{\alpha}^{a}=\Pi^{a}\cap\Pi_{\alpha}$.

Let $\mathcal{W}$ be the set of all words on the alphabet $I$. For
each $\omega=i_{1}i_{2}\cdots i_{m}\in\mathcal{W}$, set
$$M(\omega)=S_{i_{1}}\diamond S_{i_{2}}\diamond\cdots\diamond S_{i_{m}},$$
Then there is a unique $\pi\in\Pi$ such that $M(\pi)\simeq
M(\omega)$ and we set $\wp(\omega)=\pi$. It has been proved that
$\pi=\wp(\omega)\in\Pi^{a}$ and $\wp$ induces a surjection
$\wp:\mathcal{W}\twoheadrightarrow\Pi^{a}$.

\subsection{Distinguished words}\label{7 distinguished words}
For $\omega\in\mathcal{W}$, we write $\omega$ in \textit{tight
form}: $\omega=j_{1}^{e_{1}}j_{2}^{e_{2}}\cdots j_{t}^{e_{t}}$ with
$j_{r}\neq j_{r+1}$ for all $r$. A word $\omega$ is called
\textit{distinguished} if $M(\wp(\omega))$ has a unique filtration
$$M(\wp(\omega))=M_{0}\supset M_{1}\supset\cdots\supset M_{t-1}\supset M_{t}=0$$
with $M_{r-1}/M_{r}\simeq e_{r}S_{r}$.

For $\lambda\in\Pi$ and $\omega=j_{1}^{e_{1}}j_{2}^{e_{2}}\cdots
j_{t}^{e_{t}}\in\mathcal{W}$, let $\tilde{\chi}^{\lambda}_{\omega}$
denote the Euler characteristic of the variety consisting of all
filtrations of $M(\lambda)$:
$$M(\lambda)=M_{0}\supset M_{1}\supset\cdots\supset M_{t-1}\supset M_{t}=0$$
with $M_{r-1}/M_{r}\simeq e_{r}S_{r}$. Thus if $\omega$ is
distinguished then $\tilde{\chi}^{\wp(\omega)}_{\omega}=1$.

The following proposition has been proved in \cite{ddx}:
\begin{prop}\label{prop 7 exist of dist word}
For any $\pi\in\Pi^{a}$, there exists a distinguished word
$$\omega_{\pi}=j_{1}^{e_{1}}j_{2}^{e_{2}}\cdots j_{t}^{e_{t}}
\in\wp^{-1}(\pi).$$
\end{prop}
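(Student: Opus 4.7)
The plan is to argue by induction on $|\pi|:=\sum_{i,j}\pi^{(i)}_j$. The base case $|\pi|=0$ holds with the empty word. For the inductive step, I would construct an initial tight segment $j_1^{e_1}j_2^{e_2}\cdots j_s^{e_s}$ corresponding to peeling a canonical top Loewy layer off $M(\pi)$, leaving a strictly smaller aperiodic multi-partition $\pi'$; the word $\omega_\pi$ is then this segment concatenated with $\omega_{\pi'}$ from the inductive hypothesis.

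The aperiodicity of $\pi$ enters in the selection of the segment. Let $l=\max\{\pi^{(i)}_j\mid i\in I,\ j\geq 1\}$ and, using aperiodicity at length $l$, fix a vertex $i_0=i(l)$ with $\pi_j^{(i_0)}\neq l$ for all $j$. Set $j_k:=i_0+k\pmod{r}$ and $e_k:=\sharp\{j:\pi_j^{(j_k)}=l\}$, stopping at the first $s$ with $e_{s+1}=0$; because $i_0$ is missing at length $l$, this reads off a single contiguous arc of length-$l$ occurrences around $C_r$ and hence produces a genuinely tight subword. The reduced $\pi'$ is obtained by replacing each length-$l$ part of $\pi$ by a length-$(l-1)$ part. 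With a prudent choice of $i_0$ at each stage, $\pi'$ remains aperiodic: aperiodicity at length $l$ is now vacuous, and at any other length $l'\neq l$ one can select $i_0$ so as not to exhaust the ``missing vertices'' inherited from $\pi$ at length $l'$; there is enough freedom since $s<r$ by aperiodicity of $\pi$ at length $l$.

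The main obstacle is the uniqueness of the filtration, i.e. $\tilde{\chi}^{\wp(\omega_\pi)}_{\omega_\pi}=1$. What must be shown is that $M(\pi)$ admits exactly one chain
\begin{equation*}
M(\pi)=M_0\supset M_1\supset\cdots\supset M_{s-1}\supset M_s=M(\pi')
\end{equation*}
with $M_{k-1}/M_k\simeq e_kS_{j_k}$. The intuitive reason is that the radical quotient of $M(\pi)$ at vertex $j_1=i_0+1$ has multiplicity equal to the number of positive parts of $\pi^{(j_1)}$, of which exactly $e_1$ originate from length-$l$ parts; the maximality of $l$ combined with the cyclic shape of the peeled arc forces the unique $e_1$-dimensional quotient whose kernel admits the prescribed continuation. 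I would verify this by a direct analysis of the submodule lattice of $\bigoplus_{i,j}S_i[\pi_j^{(i)}]$, combined with the monoid structure of $(\mathcal{I}(C_r),\diamond)$ and the generic-extension formalism recalled from \cite{ddx}. Once uniqueness holds at the first step and $\pi'\in\Pi^{a}$, the full statement follows by induction; the tight-form condition $j_s\neq j_1'$ at the concatenation is ensured by a further careful choice of $i_0$ in the recursive step.
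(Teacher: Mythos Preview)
The paper does not give a proof of this proposition; it is quoted directly from \cite{ddx}. Your inductive scheme is in the spirit of the argument there, but the construction as written does not work.

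A concrete failure: take $r=2$ and $\pi=((2,1),\emptyset)$, which is aperiodic. Your rule gives $l=2$, $i_0=2$, $j_1=1$, $e_1=1$, so the initial segment is $1^1$. Peeling the top $S_{1}$ from $S_{1}[2]$ yields $S_{2}[1]$ (the radical shifts to the next vertex, so your description of $\pi'$ as keeping the shortened part at the same vertex is already off); the corrected $\pi'$ is $((1),(1))$, which is \emph{not} aperiodic, and the inductive hypothesis does not apply. The actual distinguished word here is $1^{2}2$: one must peel the entire top $2S_{1}$, not just the $e_1=1$ copies coming from maximal-length parts. Your formula $e_k=\sharp\{j:\pi_j^{(j_k)}=l\}$ is therefore not the right one.

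More generally: the claim that the length-$l$ parts form ``a single contiguous arc'' fails whenever there are two or more gaps at level $l$; the aperiodicity of $\pi'$ is precisely the delicate point in \cite{ddx} and cannot be dispatched with the phrase ``a prudent choice of $i_0$''; and the uniqueness of the filtration---the content of the word ``distinguished''---requires an argument that actually engages with the submodule lattice, not just a reference to generic extensions. The outline is a reasonable first approximation, but each of these three points needs a real proof, and the first already forces you to change the definition of the $e_k$.
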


For each $\pi\in\Pi^{a}_{\alpha}$, we fix a distinguished word
$\omega_{\pi}\in\wp^{-1}(\pi)$. The set
$\mathcal{D}=\{\omega_{\pi}|\pi\in\Pi^{a}\}$ is called a section of
distinguished words of $\wp$ over $\Pi^{a}$.

\subsection{Monomial bases}\label{7 monomial basis}
For $\lambda\in\Pi$ and $\omega=i_{1}i_{2}\cdots
i_{m}\in\mathcal{W}$, we denote by $\chi^{\lambda}_{\omega}$ the
Euler characteristic of the variety consisting of all filtrations of
$M(\lambda)$
$$M(\lambda)=M_{0}\supset M_{1}\supset\cdots\supset M_{m}=0$$
with $M_{r-1}/M_{r}\simeq S_{i_{r}}$.

For each word $\omega=i_{1}i_{2}\cdots i_{m}\in\mathcal{W}$ we
define
$$\mathfrak{m}_{\omega}=\mbf1_{i_{1}}\mbf1_{i_{2}}\cdots\mbf1_{i_{m}}.$$

The following results is proved in \cite{dd}

\begin{prop}\label{prop 7 monomial basis}
Fix s distinguished section
$\mathcal{D}=\{\omega_{\pi}\in\wp^{-1}(\pi)|\pi\in\Pi^{a}\}$ over
$\Pi^{a}$. The set $\{\mathfrak{m}_{\pi}|\pi\in\Pi^{a}\}$ is a
$\mathbb{C}$-basis of $\mathcal{C}(C_{r})$.
\end{prop}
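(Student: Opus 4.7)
The plan is to prove the proposition by establishing a triangularity relation between the proposed monomial set and the orbit basis $\{\mbf1_{M(\pi)}\mid\pi\in\Pi^{a}\}$ of $\mathcal{C}(C_{r})$. The latter is a $\mathbb{C}$-basis (Lusztig's aperiodicity theorem specialised to cyclic quivers), so it suffices, for each fixed dimension vector $\alpha$, to prove that the transition matrix from $\{\mathfrak{m}_{\omega_{\pi}}\mid\pi\in\Pi^{a}_{\alpha}\}$ to $\{\mbf1_{M(\pi)}\mid\pi\in\Pi^{a}_{\alpha}\}$ is upper unitriangular (up to a nonzero scalar on the diagonal) with respect to a suitable order.

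First I would unpack the monomial as an element of the Hall algebra. By the definition of $\ast$ and of characteristic functions,
$$\mathfrak{m}_{\omega_{\pi}}=\sum_{\lambda\in\Pi_{\alpha}}\chi^{\lambda}_{\omega_{\pi}}\,\mbf1_{M(\lambda)},$$
because evaluating $\mathfrak{m}_{\omega_{\pi}}$ on a point of $\mathcal{O}_{M(\lambda)}$ reproduces exactly the Euler characteristic $\chi^{\lambda}_{\omega_{\pi}}$ introduced in \ref{7 monomial basis}. Since the left hand side lies in $\mathcal{C}(C_{r})$, the aperiodicity theorem forces the sum to be supported on $\lambda\in\Pi^{a}_{\alpha}$.

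Next I would order $\Pi^{a}_{\alpha}$ by any linear refinement of the degeneration order, i.e.\ $\lambda\preceq\pi$ iff $\mathcal{O}_{M(\lambda)}\subset\overline{\mathcal{O}_{M(\pi)}}$, equivalently $\dim\End M(\lambda)\geq\dim\End M(\pi)$. The central geometric input, which follows directly from the universal property of the generic extension recalled in \ref{7 generic ext}, is the implication
$$\chi^{\lambda}_{\omega_{\pi}}\neq 0\ \Longrightarrow\ M(\lambda)\ \text{is a degeneration of}\ M(\wp(\omega_{\pi}))=M(\pi),$$
since the existence of a filtration of $M(\lambda)$ with simple factors matching $\omega_{\pi}$ places $M(\lambda)$ in the closure of the orbit of the generic extension $S_{i_{1}}\diamond S_{i_{2}}\diamond\cdots\diamond S_{i_{m}}$. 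Hence $\chi^{\lambda}_{\omega_{\pi}}=0$ unless $\lambda\preceq\pi$, giving upper triangularity.

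It remains to compute the diagonal entry $\chi^{\pi}_{\omega_{\pi}}$ and show it is nonzero. Writing $\omega_{\pi}=j_{1}^{e_{1}}\cdots j_{t}^{e_{t}}$ in tight form, every filtration of $M(\pi)$ with successive simple quotients $S_{i_{1}},\ldots,S_{i_{m}}$ refines a unique ``blocked'' filtration with quotients $e_{r}S_{j_{r}}$; the fibre of this refinement map over a blocked filtration is a product of complete flag varieties in each semisimple piece $e_{r}S_{j_{r}}$, whose Euler characteristics multiply to $\prod_{r}e_{r}!$. The distinguished property $\tilde{\chi}^{\pi}_{\omega_{\pi}}=1$ then yields
$$\chi^{\pi}_{\omega_{\pi}}=\Bigl(\prod_{r=1}^{t}e_{r}!\Bigr)\,\tilde{\chi}^{\pi}_{\omega_{\pi}}=\prod_{r=1}^{t}e_{r}!\neq 0.$$
Combining the three steps, the transition matrix $(\chi^{\lambda}_{\omega_{\pi}})_{\lambda,\pi\in\Pi^{a}_{\alpha}}$ is upper triangular with nonzero diagonal, so $\{\mathfrak{m}_{\omega_{\pi}}\mid\pi\in\Pi^{a}_{\alpha}\}$ is a basis of $\mathcal{C}_{\alpha}(C_{r})$; summing over all $\alpha\in\mathbb{N}[I]$ finishes the proof. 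The main obstacle is the degeneration implication in the second step: it must be verified uniformly for every $\lambda\in\Pi^{a}_{\alpha}$ with $\chi^{\lambda}_{\omega_{\pi}}\neq 0$, and the cleanest route is to invoke Reineke's theorem that generic extension produces the unique orbit of maximal dimension among iterated simple extensions, exactly as exploited in \cite{ddx}.
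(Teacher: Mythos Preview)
Your triangularity argument and the diagonal computation are correct and constitute the core of the standard approach (this is essentially the method of \cite{dd},\cite{ddx} that the paper alludes to). The expansion $\mathfrak{m}_{\omega_{\pi}}=\sum_{\lambda\leq\pi}\chi^{\lambda}_{\omega_{\pi}}\mbf1_{M(\lambda)}$ with leading coefficient $\chi^{\pi}_{\omega_{\pi}}=\prod_{r}e_{r}!\neq 0$ does prove that the family $\{\mathfrak{m}_{\omega_{\pi}}\}_{\pi\in\Pi^{a}_{\alpha}}$ is linearly independent in $\mathcal{H}(C_{r})$, hence in $\mathcal{C}(C_{r})$.

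The gap is in your first step: $\{\mbf1_{M(\pi)}\mid\pi\in\Pi^{a}\}$ is \emph{not} a basis of $\mathcal{C}(C_{r})$, because for a general aperiodic $\pi$ the orbit function $\mbf1_{M(\pi)}$ does not lie in the composition algebra at all. This is exactly why the paper later has to introduce the corrected elements $E_{\pi}$ (Lemma~\ref{lem 7 E pi expression}), which differ from $\mbf1_{M(\pi)}$ by a combination of \emph{non}-aperiodic orbit functions. Consequently the deduction ``the aperiodicity theorem forces the sum to be supported on $\lambda\in\Pi^{a}_{\alpha}$'' is also false: any $M(\lambda)$ admitting a composition series of type $\omega_{\pi}$ contributes to the expansion, aperiodic or not, and elements of $\mathcal{C}(C_{r})$ are in general supported on non-aperiodic orbits as well. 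What survives from your argument is linear independence; for spanning you must separately invoke $\dim\mathcal{C}(C_{r})_{\alpha}=|\Pi^{a}_{\alpha}|$, which is the actual content of the result you are citing (it follows from Theorem~\ref{thm 3 Ringel-Green} together with the known weight multiplicities of $U^{+}(\widehat{\mathfrak{sl}}_{r})$, or from the parametrisation of the canonical basis in \cite{l3}). With that dimension count your proof goes through; but note you cannot appeal to Lemma~\ref{lem 7 decomp of Hall alg} for this purpose, since in the paper that lemma is deduced \emph{from} the present proposition.
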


Note that the proof in \cite{dd} has used results in the quantum
case. However, a self-contained proof in our case can be easily
given by a similar method, which we omitted here.

\subsection{A geometric order on $\Pi$}\label{7 geometric order}
We can define an order on the set $\Pi$ as follows: For
$\lambda,\mu\in\Pi$, set $\lambda\leq\mu$ if and only if
$\mathcal{O}_{M(\lambda)}\subset\overline{\mathcal{O}}_{M(\mu)}$. Of
course this order can be endowed in $\mathcal{I}(C_{r})$ by setting
$M(\lambda)\leq M(\mu)$ if and only if $\lambda\leq\mu$.

The following lemma asserts that the order is compatible with the
generic extension, see \cite{dd}.
\begin{lem}\label{lem 7 order compatible}
$M'\leq M$, $N'\leq N$ implies $M'\diamond N'\leq M\diamond N$.
\end{lem}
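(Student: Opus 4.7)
My strategy is to realize both $M\diamond N$ and $M'\diamond N'$ as members of a single irreducible closed $G_\gamma$-invariant subset of $\mathbf{E}_\gamma^{nil}$, namely an orbit closure which will turn out to be $\overline{\mathcal{O}_{M\diamond N}}$. Set $\alpha=\dimv M=\dimv M'$, $\beta=\dimv N=\dimv N'$, $\gamma=\alpha+\beta$, and recall the Lusztig diagram from Section \ref{3 Hall alg}:
$$\mathbf{E}_{\alpha}^{nil}\times\mathbf{E}_{\beta}^{nil}\xleftarrow{p_{1}}\mathbf{E}''\xrightarrow{p_{2}}\mathbf{E}'\xrightarrow{p_{3}}\mathbf{E}_{\gamma}^{nil},$$
where $p_1$ is smooth with connected fibres, $p_2$ is a principal $G_\alpha\times G_\beta$-bundle, and $p_3$ is proper. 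For a $G$-invariant constructible set $\mathcal{Z}\subset\mathbf{E}_{\alpha}^{nil}\times\mathbf{E}_{\beta}^{nil}$, write $\mathcal{E}(\mathcal{Z})=p_3(p_2(p_1^{-1}(\mathcal{Z})))$, the set of extensions realising pairs in $\mathcal{Z}$.

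First, I would treat the generic case. The product $\mathcal{O}_M\times\mathcal{O}_N$ is irreducible; since $p_1$ is smooth with connected fibres its preimage is irreducible, and $p_2$, $p_3$ preserve irreducibility under their images. Hence $\mathcal{E}(\mathcal{O}_M\times\mathcal{O}_N)$ is an irreducible, $G_\gamma$-invariant constructible subset of $\mathbf{E}_\gamma^{nil}$, parameterizing all extensions of $M$ by $N$. By the very definition of the generic extension in \ref{7 generic ext}, $M\diamond N$ is the unique isomorphism class of maximal orbit dimension occurring here, so $\mathcal{O}_{M\diamond N}$ is the unique open dense orbit and $\overline{\mathcal{E}(\mathcal{O}_M\times\mathcal{O}_N)}=\overline{\mathcal{O}_{M\diamond N}}$.

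Next I would replace the orbits by their closures. The product $\overline{\mathcal{O}_M}\times\overline{\mathcal{O}_N}$ is closed and irreducible, so $p_1^{-1}(\overline{\mathcal{O}_M}\times\overline{\mathcal{O}_N})$ is closed and irreducible (using smoothness of $p_1$); the image under the principal bundle $p_2$ is closed and irreducible in $\mathbf{E}'$; and because $p_3$ is proper, its further image $\mathcal{E}(\overline{\mathcal{O}_M}\times\overline{\mathcal{O}_N})$ is a closed, irreducible, $G_\gamma$-invariant subset of $\mathbf{E}_\gamma^{nil}$. Any such set is automatically an orbit closure. Since $\mathcal{O}_M\times\mathcal{O}_N$ is open and dense in $\overline{\mathcal{O}_M}\times\overline{\mathcal{O}_N}$, its preimage under $p_1$ is open and dense, and surjectivity of $p_2$ and $p_3$ onto their respective images then shows that $\mathcal{E}(\mathcal{O}_M\times\mathcal{O}_N)$ is dense in $\mathcal{E}(\overline{\mathcal{O}_M}\times\overline{\mathcal{O}_N})$. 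Combining with the previous paragraph,
$$\mathcal{E}(\overline{\mathcal{O}_M}\times\overline{\mathcal{O}_N})=\overline{\mathcal{O}_{M\diamond N}}.$$

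The conclusion is immediate: since $M'\in\overline{\mathcal{O}_M}$ and $N'\in\overline{\mathcal{O}_N}$, the module $M'\diamond N'$ is an extension of $M'$ by $N'$ and therefore lies in $\mathcal{E}(\overline{\mathcal{O}_M}\times\overline{\mathcal{O}_N})=\overline{\mathcal{O}_{M\diamond N}}$, giving $M'\diamond N'\leq M\diamond N$ by definition of the order in \ref{7 geometric order}. The step I expect to require the most care is the identification $\mathcal{E}(\overline{\mathcal{O}_M}\times\overline{\mathcal{O}_N})=\overline{\mathcal{O}_{M\diamond N}}$: one must genuinely invoke that $p_1$ is smooth (so that preimages of closed irreducible sets remain closed irreducible with the expected fibre behaviour) and that $p_3$ is proper (so that the image of the resulting closed set stays closed); the rest is formal bookkeeping with $G$-invariance and orbit closures.
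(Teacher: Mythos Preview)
The paper does not actually prove this lemma; it merely quotes the statement with the citation ``see \cite{dd}'' and moves on. Your proposal supplies a genuine self-contained geometric proof, and it is correct.

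Your argument is the standard one: push orbit closures through Lusztig's convolution diagram, using smoothness of $p_1$ (so preimages of closed irreducible sets stay closed irreducible, and preimages of dense opens stay dense), the principal-bundle structure of $p_2$ (so $G_\alpha\times G_\beta$-invariant closed sets have closed image), and properness of $p_3$ (so closed goes to closed). The key identity $\mathcal{E}(\overline{\mathcal{O}_M}\times\overline{\mathcal{O}_N})=\overline{\mathcal{O}_{M\diamond N}}$ then follows from the density of $\mathcal{E}(\mathcal{O}_M\times\mathcal{O}_N)$ together with the defining property of the generic extension as the unique orbit of maximal dimension among extensions.

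One small remark: the sentence ``Any such set is automatically an orbit closure'' is not a general fact about $G$-varieties; it holds here because $\mathbf{E}_\gamma^{nil}$ for a cyclic quiver has only finitely many $G_\gamma$-orbits (indexed by the finite set $\Pi_\gamma$), so any closed irreducible $G_\gamma$-invariant subset is a finite union of orbits and hence the closure of the one of top dimension. You should either say this, or simply delete the sentence, since your subsequent density argument already yields the identification $\mathcal{E}(\overline{\mathcal{O}_M}\times\overline{\mathcal{O}_N})=\overline{\mathcal{O}_{M\diamond N}}$ without it. Also note that you have (correctly) swapped $\mathbf{E}'$ and $\mathbf{E}''$ in the diagram relative to the paper's display in \S\ref{3 Hall alg}, which contains a typo; the descriptions of $p_1,p_2,p_3$ there make clear that the framed variety $\mathbf{E}''$ sits in the middle.
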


\begin{lem}\label{lem 7 monomial expression as char}
For each $\omega=i_{1}i_{2}\cdots i_{m}\in\mathcal{W}$, we have
$$\mathfrak{m}_{\omega}=\sum_{\lambda\leq\wp(\omega)}\chi^{\lambda}_{\omega}\mbf1_{M(\lambda)}$$
\end{lem}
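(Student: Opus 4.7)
The plan is to first express $\mathfrak{m}_\omega$ as an explicit constructible function via iterated convolution, and then cut down its support using the maximality property of the generic extension together with Lemma \ref{lem 7 order compatible}. For the first step, I compute $\mathfrak{m}_\omega(N)$ for $N \in \rep_0(C_r)$ of dimension vector $i_1 + \cdots + i_m$. Iterating the identity $\mbf1_M \mbf1_N(L) = \chi(\mathcal{F}(M,N;L))$ of Section \ref{3 char function} and invoking associativity of the Hall product gives
\[
\mathfrak{m}_\omega(N) \;=\; \chi(\mathcal{F}_\omega(N)),
\]
where $\mathcal{F}_\omega(N)$ is the variety of filtrations $N = N_0 \supset N_1 \supset \cdots \supset N_m = 0$ with $N_{r-1}/N_r \simeq S_{i_r}$. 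Evaluating at $N = M(\lambda)$ yields $\mathfrak{m}_\omega(M(\lambda)) = \chi^\lambda_\omega$, so
\[
\mathfrak{m}_\omega \;=\; \sum_\lambda \chi^\lambda_\omega \, \mbf1_{M(\lambda)},
\]
where $\lambda$ ranges over all elements of $\Pi$ with $\dimv M(\lambda) = i_1 + \cdots + i_m$.

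The second step is to establish the support restriction: $\chi^\lambda_\omega \neq 0$ forces $\lambda \leq \wp(\omega)$. I induct on $m$. The base $m = 1$ is trivial. For $m > 1$, a nonzero $\chi^\lambda_\omega$ yields a submodule $L_1 \subset M(\lambda)$ with $M(\lambda)/L_1 \simeq S_{i_1}$ such that $L_1$ admits a filtration with subquotients $S_{i_2}, \ldots, S_{i_m}$. By the inductive hypothesis $L_1 \leq S_{i_2} \diamond \cdots \diamond S_{i_m}$. Since $M(\lambda)$ is an extension of $S_{i_1}$ by $L_1$, the defining maximality of $S_{i_1} \diamond L_1$ gives $M(\lambda) \leq S_{i_1} \diamond L_1$, and Lemma \ref{lem 7 order compatible} then yields
\[
S_{i_1} \diamond L_1 \;\leq\; S_{i_1} \diamond (S_{i_2} \diamond \cdots \diamond S_{i_m}) \;=\; M(\wp(\omega)),
\]
using associativity of $\diamond$. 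Chaining these inequalities produces $M(\lambda) \leq M(\wp(\omega))$, which is exactly the desired support condition.

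The main obstacle I anticipate is purely bookkeeping in the first step: verifying carefully that the iterated Hall product genuinely realises the full variety of length-$m$ composition series with the prescribed successive quotients, by telescoping the three-term diagram $\mathbf{E}_\alpha^{nil} \times \mathbf{E}_\beta^{nil} \leftarrow \mathbf{E}' \rightarrow \mathbf{E}'' \rightarrow \mathbf{E}_\gamma^{nil}$ of Section \ref{3 Hall alg} $m - 1$ times. Once this filtration-variety interpretation of $\mathfrak{m}_\omega(N)$ is in place, the restriction of the sum to $\lambda \leq \wp(\omega)$ is an immediate consequence of the inductive generic-extension argument sketched above, since Lemma \ref{lem 7 order compatible} was precisely tailored to propagate the degeneration order through iterated $\diamond$.
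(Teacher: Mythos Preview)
Your proposal is correct and follows essentially the same approach as the paper's proof: reduce to showing that $\chi^\lambda_\omega \neq 0$ implies $\lambda \leq \wp(\omega)$, then induct on $m$ using the maximality of the generic extension together with Lemma~\ref{lem 7 order compatible}. The paper absorbs your first step into the phrase ``by the definition of $\mathfrak{m}_\omega$'', and your inductive argument in the second step matches theirs line for line.
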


\begin{proof}
By the definition of $\mathfrak{m}_{\omega}$, we just need to prove
that $\chi^{\lambda}_{\omega}\neq 0$ implies
$\lambda\leq\wp(\omega)$. We prove by induction on $m$.

If $m=1$, there is nothing to prove. So let $m>1$ and set
$\omega'=i_{2}\cdots i_{m}$. Then
$$M(\omega)=S_{i_{1}}\diamond(S_{i_{2}}\diamond\cdots\diamond S_{i_{m}})=S_{i_{1}}\diamond M(\omega').$$

Since $\chi^{\lambda}_{\omega}\neq 0$, $M(\lambda)$ has a submodule
$M'$ with $M(\lambda)/M'\simeq S_{i_{1}}$ and $M'$ has a composition
series of type $\omega'$.

By the inductive hypothesis, we have $M'\leq M(\omega')$. Hence
$$M(\lambda)\leq S_{i_{1}}\diamond M'\leq S_{i_{1}}\diamond M(\omega')=M(\omega)=M(\wp(\omega)).$$
That is, $\lambda\leq\wp(\omega)$.
\end{proof}

\subsection{A $\mathbb{Z}$-basis of $\mathcal{C}_{\mathbb{Z}}(C_{r})$}\label{7 basis cyclic
quiver} For each $\omega=j_{1}^{e_{1}}j_{2}^{e_{2}}\cdots
j_{t}^{e_{t}}\in\mathcal{W}$ in tight form, define
$$\mathfrak{m}^{(\omega)}=\mbf1_{j_{1}}^{(e_{1})}\mbf1_{j_{2}}^{(e_{2})}\cdots \mbf1_{j_{t}}^{(e_{t})}.$$
Then we have
$$\mathfrak{m}^{(\omega)}=\sum_{\lambda\leq\wp(\omega)}\tilde{\chi}^{\lambda}_{\omega}\mbf1_{M(\lambda)}.$$

In particular, for a distinguished word
$\omega_{\pi}\in\wp^{-1}(\pi)$ with $\pi\in\Pi^{a}$, since
$\tilde{\chi}^{\pi}_{\omega_{\pi}}=1$, we have
$$\mathfrak{m}^{(\omega_{\pi})}=\mbf1_{M(\pi)}+\sum_{\lambda<\pi}\tilde{\chi}^{\lambda}_{\omega_{\pi}}\mbf1_{M(\lambda)}.$$

\begin{lem}\label{lem 7 decomp of Hall alg}
Let $\mathcal{P}(C_{r})$ be the $\mathbb{C}$-subspace of
$\mathcal{H}(C_{r})$ spanned by all $\mbf1_{M(\lambda)}$ with
$\lambda\in\Pi\setminus\Pi^{a}$. Then as a vector space,
$\mathcal{H}(C_{r})=\mathcal{C}(C_{r})\oplus\mathcal{P}(C_{r})$.
\end{lem}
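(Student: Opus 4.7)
The plan is to exhibit a single basis of $\mathcal{H}(C_{r})$ that splits cleanly into a basis of $\mathcal{C}(C_{r})$ and a basis of $\mathcal{P}(C_{r})$, using the unitriangular expression for $\mathfrak{m}^{(\omega_{\pi})}$ already established right before the statement. Since both $\mathcal{H}(C_{r})$ and $\mathcal{C}(C_{r})$ are $\mathbb{N}[I]$-graded and $\mathcal{P}(C_{r})$ is manifestly graded as well, it suffices to prove the decomposition in each dimension component $\mathcal{H}_{\alpha}(C_{r})$, where everything is finite-dimensional.

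First I would note that $\{\mathfrak{m}^{(\omega_{\pi})}\mid\pi\in\Pi^{a}\}$ is again a $\mathbb{C}$-basis of $\mathcal{C}(C_{r})$. This is because for a word $\omega=j_{1}^{e_{1}}\cdots j_{t}^{e_{t}}$ in tight form each simple $S_{j_{k}}$ is exceptional in $\rep_{0}(C_{r})$ (the cyclic quiver has no loops), so Lemma \ref{lem 3 copies of exc} gives $\mbf1_{j_{k}}^{e_{k}}=e_{k}!\,\mbf1_{j_{k}}^{(e_{k})}$, whence $\mathfrak{m}_{\omega}=(e_{1}!\cdots e_{t}!)\,\mathfrak{m}^{(\omega)}$, a nonzero scalar. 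Thus Proposition \ref{prop 7 monomial basis} immediately yields that $\{\mathfrak{m}^{(\omega_{\pi})}\}_{\pi\in\Pi^{a}}$ is a basis of $\mathcal{C}(C_{r})$, and in particular $\{\mathfrak{m}^{(\omega_{\pi})}\}_{\pi\in\Pi^{a}_{\alpha}}$ is a basis of $\mathcal{C}_{\alpha}(C_{r})$.

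Next I would form the candidate basis of $\mathcal{H}_{\alpha}(C_{r})$:
$$B_{\alpha}=\{\mathfrak{m}^{(\omega_{\pi})}\mid\pi\in\Pi^{a}_{\alpha}\}\cup\{\mbf1_{M(\lambda)}\mid\lambda\in\Pi_{\alpha}\setminus\Pi^{a}_{\alpha}\}.$$
Fix any total order on the finite set $\Pi_{\alpha}$ refining the geometric partial order $\leq$ from \ref{7 geometric order}. By the displayed identity just before Lemma \ref{lem 7 decomp of Hall alg} (which uses that $\omega_{\pi}$ is distinguished, so $\tilde\chi^{\pi}_{\omega_{\pi}}=1$), each $\mathfrak{m}^{(\omega_{\pi})}$ equals $\mbf1_{M(\pi)}$ plus a combination of $\mbf1_{M(\lambda)}$ with $\lambda<\pi$. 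For $\lambda\notin\Pi^{a}_{\alpha}$, the element $\mbf1_{M(\lambda)}$ of $B_{\alpha}$ is of course itself. Hence the transition matrix from $B_{\alpha}$ to the standard basis $\{\mbf1_{M(\pi)}\mid\pi\in\Pi_{\alpha}\}$ is lower unitriangular, therefore invertible, so $B_{\alpha}$ is a basis of $\mathcal{H}_{\alpha}(C_{r})$.

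Finally, the partition $B_{\alpha}=B_{\alpha}^{\,\mathcal{C}}\sqcup B_{\alpha}^{\,\mathcal{P}}$ into the aperiodic and non-aperiodic parts has $\mathrm{span}\,B_{\alpha}^{\,\mathcal{C}}=\mathcal{C}_{\alpha}(C_{r})$ by the first step and $\mathrm{span}\,B_{\alpha}^{\,\mathcal{P}}=\mathcal{P}_{\alpha}(C_{r})$ by definition, so $\mathcal{H}_{\alpha}(C_{r})=\mathcal{C}_{\alpha}(C_{r})\oplus\mathcal{P}_{\alpha}(C_{r})$. Summing over $\alpha\in\mathbb{N}[I]$ gives the lemma. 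The only step that requires any care is the verification that the divided-power monomials still span $\mathcal{C}(C_{r})$, and that is just the exceptional-simple computation above; the rest is a purely formal unitriangularity argument graded piece by graded piece.
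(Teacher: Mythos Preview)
Your proof is correct and rests on the same key ingredient as the paper's: the unitriangularity of $\mathfrak{m}^{(\omega_{\pi})}$ (equivalently $\mathfrak{m}_{\omega_{\pi}}$) in the $\mbf1_{M(\lambda)}$ with respect to the geometric order. The only difference is packaging: the paper shows $\mathcal{C}(C_{r})_{\alpha}\cap\mathcal{P}(C_{r})_{\alpha}=0$ by picking a maximal $\mu$ with $a_{\mu}\neq 0$ and reading off the coefficient of $\mbf1_{M(\mu)}$, then invokes a dimension count, whereas you assemble the single set $B_{\alpha}$ and check directly that its transition matrix to $\{\mbf1_{M(\lambda)}\}$ is unitriangular; both arguments are equivalent reformulations of the same triangularity.
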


\begin{proof}
Since $\mathcal{H}(C_{r})$ and $\mathcal{C}(C_{r})$ are
$\mathbb{N}[I]$-graded, it suffices to prove that for each
$\alpha\in\mathbb{N}[I]$,
$\mathcal{H}(C_{r})_{\alpha}=\mathcal{C}(C_{r})_{\alpha}\oplus\mathcal{P}(C_{r})_{\alpha}$,
where $\mathcal{P}(C_{r})_{\alpha}$ is the $\mathbb{C}$-subspace of
$\mathcal{H}(C_{r})_{\alpha}$ spanned by all $\mbf1_{M(\lambda)}$
with $\lambda\in\Pi_{\alpha}\setminus\Pi^{a}_{\alpha}$.

Now we show
$\mathcal{C}(C_{r})_{\alpha}\cap\mathcal{P}(C_{r})_{\alpha}=\{0\}$.
Once this is done, a dimension comparison forces
$\mathcal{H}(C_{r})_{\alpha}=\mathcal{C}(C_{r})_{\alpha}\oplus\mathcal{P}(C_{r})_{\alpha}$.

Take an
$x\in\mathcal{C}(C_{r})_{\alpha}\cap\mathcal{P}(C_{r})_{\alpha}$ and
suppose $x\neq 0$. Then we can write
$$x=\sum_{\pi\in\Pi^{a}_{\alpha}}a_{\pi}\mathfrak{m}_{\omega_{\pi}}$$
for some $a_{\pi}\in\mathbb{C}$. Let $\mu\in\Pi^{a}_{\alpha}$ be
maximal such that $a_{\mu}\neq 0$. We can rewrite
$x=\sum_{\lambda\in\Pi_{\alpha}}b_{\lambda}\mbf1_{M(\lambda)}$. By
the maximality of $\mu$, we have
$b_{\mu}=a_{\mu}\chi^{\mu}_{\omega_{\mu}}$, which contradicts the
fact that $x\in\mathcal{P}(C_{r})_{\alpha}$.
\end{proof}

Now we fix a section of distinguished words
$\mathcal{D}=\{\omega_{\pi}|\pi\in\Pi^{a}\}$, define inductively the
elements $E_{\pi}$ as follows:

For any $\alpha\in\mathbb{N}[I]$ and $\pi\in\Pi^{a}_{\alpha}$, if
$\pi$ is minimal, let
$$E_{\pi}=\mathfrak{m}^{(\omega_{\pi})}\in\mathcal{C}_{\mathbb{Z}}(C_{r})_{\alpha}.$$

In general, assume that
$E_{\lambda}\in\mathcal{C}_{\mathbb{Z}}(C_{r})_{\alpha}$ has been
defined for all $\lambda\in\Pi^{a}_{\alpha}$ with $\lambda<\pi$,
then we define
$$E_{\pi}=\mathfrak{m}^{(\omega_{\pi})}-\sum_{\lambda<\pi,\lambda\in\Pi^{a}_{\alpha}}\tilde{\chi}^{\lambda}_{\omega_{\pi}}E_{\lambda}\in\mathcal{C}_{\mathbb{Z}}(C_{r})_{\alpha}.$$

\begin{lem}\label{lem 7 E pi expression}
Let $\{\omega_{\pi}|\pi\in\Pi^{a}\}$ be a given distinguished
section. For each $\pi\in\Pi^{a}_{\alpha}$, we have
$$E_{\pi}=\mbf1_{M(\pi)}+\sum_{\lambda\in\Pi_{\alpha}\setminus\Pi^{a}_{\alpha},\lambda<\pi}g^{\pi}_{\lambda}\mbf1_{M(\lambda)}$$
for some $g^{\pi}_{\lambda}\in\mathbb{Z}$, and
$$\mathfrak{m}^{(\omega_{\pi})}=E_{\pi}+\sum_{\lambda<\pi,\lambda\in\Pi^{a}_{\alpha}}\tilde{\chi}^{\lambda}_{\omega_{\pi}}E_{\lambda}$$
\end{lem}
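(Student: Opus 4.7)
The plan is to proceed by induction on $\pi$ with respect to the geometric order $\leq$ restricted to $\Pi^{a}_{\alpha}$. The second identity is nothing more than a rearrangement of the inductive definition
$$E_{\pi}=\mathfrak{m}^{(\omega_{\pi})}-\sum_{\lambda<\pi,\,\lambda\in\Pi^{a}_{\alpha}}\tilde{\chi}^{\lambda}_{\omega_{\pi}}E_{\lambda},$$
so essentially all the work goes into the first identity.

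For the base case, take $\pi$ minimal in $\Pi^{a}_{\alpha}$. Then $E_{\pi}=\mathfrak{m}^{(\omega_{\pi})}$, and by the displayed expansion just before Lemma \ref{lem 7 decomp of Hall alg},
$$\mathfrak{m}^{(\omega_{\pi})}=\mbf1_{M(\pi)}+\sum_{\lambda<\pi}\tilde{\chi}^{\lambda}_{\omega_{\pi}}\mbf1_{M(\lambda)}.$$
By minimality of $\pi$ in $\Pi^{a}_{\alpha}$, every $\lambda<\pi$ (with $\lambda\in\Pi_{\alpha}$) automatically lies in $\Pi_{\alpha}\setminus\Pi^{a}_{\alpha}$, so the claim follows with $g^{\pi}_{\lambda}=\tilde{\chi}^{\lambda}_{\omega_{\pi}}\in\mathbb{Z}$.

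For the inductive step, substitute the above expansion of $\mathfrak{m}^{(\omega_{\pi})}$ and the inductive hypothesis for each $E_{\lambda}$ (with $\lambda<\pi,\ \lambda\in\Pi^{a}_{\alpha}$) into the definition of $E_{\pi}$. This yields an expression of $E_{\pi}$ as a $\mathbb{Z}$-linear combination of elements $\mbf1_{M(\nu)}$. I would then track the coefficients by splitting the sum over $\lambda<\pi$ in $\mathfrak{m}^{(\omega_{\pi})}$ into $\lambda\in\Pi^{a}_{\alpha}$ and $\lambda\in\Pi_{\alpha}\setminus\Pi^{a}_{\alpha}$. By the inductive hypothesis, the only $\mbf1_{M(\lambda')}$-term with $\lambda'\in\Pi^{a}_{\alpha}$ appearing in any $E_{\lambda}$ is the leading one, occurring exactly in $E_{\lambda'}$ with coefficient $1$; hence the coefficient of $\mbf1_{M(\lambda')}$ in $E_{\pi}$ equals $\tilde{\chi}^{\lambda'}_{\omega_{\pi}}-\tilde{\chi}^{\lambda'}_{\omega_{\pi}}=0$, and these aperiodic terms disappear. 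What remains is $\mbf1_{M(\pi)}$ with coefficient $1$, together with integer combinations of $\mbf1_{M(\mu)}$ for $\mu\in\Pi_{\alpha}\setminus\Pi^{a}_{\alpha}$ coming either directly from $\mathfrak{m}^{(\omega_{\pi})}$ or from the tails of the $E_{\lambda}$'s; in both cases transitivity of $<$ gives $\mu<\pi$.

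The argument is almost entirely bookkeeping; the only point requiring a moment's care is the cancellation of the aperiodic intermediate terms, which is where the inductive hypothesis (that $E_{\lambda}$ is $\mbf1_{M(\lambda)}$ plus non-aperiodic corrections) is used in an essential way. Integrality of all $g^{\pi}_{\lambda}$ is automatic from the fact that each $\tilde{\chi}^{\lambda}_{\omega_{\pi}}$ is an Euler characteristic, hence an integer, and that the inductive coefficients $g^{\lambda}_{\mu}$ are integers by hypothesis.
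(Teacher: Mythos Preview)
Your argument is correct and follows the same inductive route the paper indicates. The paper's one-line proof cites ``induction and Lemma~\ref{lem 7 decomp of Hall alg}'', but your explicit cancellation of the aperiodic terms via the inductive hypothesis accomplishes directly what that reference is meant to supply, so no separate appeal to the decomposition $\mathcal{H}(C_r)=\mathcal{C}(C_r)\oplus\mathcal{P}(C_r)$ is needed.
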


\begin{proof}
The second formula follows immediately from the definition. The
first assertion follows from induction and Lemma \ref{lem 7 decomp
of Hall alg}.
\end{proof}

\begin{prop}\label{prop 7 basis cyclic}
For each distinguished section
$\mathcal{D}=\{\omega_{\pi}|\pi\in\Pi^{a}\}$ of $\wp$ over
$\Pi^{a}$, the set $\{E_{\pi}|\pi\in\Pi^{a}\}$ is a
$\mathbb{Z}$-basis of $\mathcal{C}_{\mathbb{Z}}(C_{r})$.
\end{prop}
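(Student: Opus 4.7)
The plan is to exploit two ingredients in combination: the triangular form of Lemma \ref{lem 7 E pi expression}, and the direct-sum decomposition $\mathcal{H}(C_{r}) = \mathcal{C}(C_{r}) \oplus \mathcal{P}(C_{r})$ from Lemma \ref{lem 7 decomp of Hall alg}. The crucial feature, read off Lemma \ref{lem 7 E pi expression}, is that in the orbit basis $\{\mbf1_{M(\mu)} \mid \mu \in \Pi\}$ of $\mathcal{H}(C_{r})$, each $E_{\pi}$ has aperiodic component equal to $\mbf1_{M(\pi)}$ alone, while all remaining contributions sit inside $\mathcal{P}(C_{r})$. This single fact makes both the linear independence and the integral spanning essentially immediate.

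For linear independence, suppose $\sum_{\pi \in \Pi^{a}} a_{\pi} E_{\pi} = 0$ with $a_{\pi} \in \mathbb{Z}$. Expanding the left-hand side in the orbit basis via Lemma \ref{lem 7 E pi expression}, the coefficient of $\mbf1_{M(\nu)}$ for each aperiodic $\nu$ equals $a_{\nu}$, so all $a_{\nu}$ vanish.

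For the $\mathbb{Z}$-spanning, recall that $\mathcal{C}_{\mathbb{Z}}(C_{r})$ is generated by the divided powers $\mbf1_{i}^{(n)}$, and is therefore the $\mathbb{Z}$-span of the monomials $\mathfrak{m}^{(\omega)}$ over words $\omega \in \mathcal{W}$ in tight form. I would therefore fix such an $\omega$, with $\wp(\omega) \in \Pi^{a}_{\alpha}$ say, and expand
\[
\mathfrak{m}^{(\omega)} = \sum_{\pi \in \Pi^{a}_{\alpha}} c_{\pi} \mbf1_{M(\pi)} + \sum_{\lambda \in \Pi_{\alpha} \setminus \Pi^{a}_{\alpha}} d_{\lambda} \mbf1_{M(\lambda)}.
\]
All $c_{\pi}, d_{\lambda}$ are integers, because they are values of an iterated convolution product of characteristic functions at single orbits, hence Euler characteristics of constructible filtration varieties. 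Forming $x = \mathfrak{m}^{(\omega)} - \sum_{\pi \in \Pi^{a}_{\alpha}} c_{\pi} E_{\pi}$, the triangular form of Lemma \ref{lem 7 E pi expression} cancels every aperiodic contribution on the right, so $x \in \mathcal{P}(C_{r})$. But $x \in \mathcal{C}(C_{r})$ as well since both terms are, and Lemma \ref{lem 7 decomp of Hall alg} forces $x = 0$. Thus $\mathfrak{m}^{(\omega)} = \sum_{\pi \in \Pi^{a}_{\alpha}} c_{\pi} E_{\pi}$ with integer coefficients.

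The main obstacle to face is exactly the spanning step: a priori there is no reason for the $\mathbb{C}$-coefficients expressing $\mathfrak{m}^{(\omega)}$ in the $\{E_{\pi}\}$ basis to be integers. The mechanism that rescues this is the interplay of the purely geometric integrality of the orbit-basis coefficients with the non-trivial algebraic fact $\mathcal{C}(C_{r}) \cap \mathcal{P}(C_{r}) = 0$, which forces the non-aperiodic discrepancy to vanish automatically. This projects the problem onto the aperiodic part, where the triangular shape of $E_{\pi}$ lets us simply read off the integer coefficients $c_{\pi}$.
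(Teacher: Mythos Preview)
Your proof is correct and follows essentially the same approach as the paper's: both argue linear independence from the triangular form of Lemma~\ref{lem 7 E pi expression}, and both establish integral spanning by expanding $\mathfrak{m}^{(\omega)}$ in the orbit basis, subtracting the $\mathbb{Z}$-combination of $E_{\pi}$'s matching the aperiodic coefficients, and invoking Lemma~\ref{lem 7 decomp of Hall alg} to kill the remainder. The paper additionally tracks the partial order $\leq$ on $\Pi_{\alpha}$ so that the sum runs only over $\lambda \leq \wp(\omega)$, but this is cosmetic since the remaining coefficients vanish anyway.
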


\begin{proof}
We have known that the elements in the set are $\mathbb{Z}$-linearly
independent. So it suffices to prove that for any
$\alpha\in\mathbb{N}[I]$, the $\mathbb{Z}$-module
$\mathcal{C}_{\mathbb{Z}}(C_{r})_{\alpha}$ is spanned by
$\{E_{\lambda}|\lambda\in\Pi^{a}_{\alpha}\}$.

Let $\mathcal{W}_{\alpha}=\{\omega\in\mathcal{W}|\dimv
M(\wp(\omega))=\alpha\}$. It is clear that
$\mathcal{C}_{\mathbb{Z}}(C_{r})_{\alpha}$ is spanned by
$\mathfrak{m}^{(\pi)}$, $\pi\in\mathcal{W}_{\alpha}$. Thus it
remains to prove that each $\mathfrak{m}^{(\pi)}$ is a
$\mathbb{Z}$-linear combination of $E_{\pi}$,
$\pi\in\Pi^{a}_{\alpha}$.

Take arbitrary $\omega\in\mathcal{W}_{\alpha}$, and set
$\pi=\wp(\omega)\in\Pi^{a}_{\alpha}$. We have
$$\mathfrak{m}^{(\pi)}=\sum_{\lambda\leq\pi}\tilde{\chi}^{\lambda}_{\omega}\mbf1_{M(\lambda)},$$
hence
$$\mathfrak{m}^{(\pi)}-\sum_{\lambda\in\Pi^{a}_{\alpha},\lambda\leq\pi}\tilde{\chi}^{\lambda}_{\omega}
=\sum_{\lambda\in\Pi_{\alpha}\setminus\Pi^{a}_{\alpha}}a^{\pi}_{\lambda}\mbf1_{M(\lambda)},$$
for some $a^{\pi}_{\lambda}\in\mathbb{Z}$.

The left hand side in the above formula is in
$C_{\mathbb{Z}}(C_{r})_{\alpha}$. Hence by Lemma \ref{lem 7 decomp
of Hall alg}, it must be zero. That yields
$$\mathfrak{m}^{(\omega)}=\sum_{\lambda\in\Pi^{a}_{\alpha},\lambda\leq\pi}\tilde{\chi}^{\lambda}_{\omega}E_{\lambda}.$$
\end{proof}

\subsection{Connection with a basis of
$\mathfrak{n}^{+}(C_{r})$}\label{7 conn with lie alg basis} We
investigate the relation between the $\mathbb{Z}$-basis we
constructed in Proposition \ref{prop 7 basis cyclic} and the basis
of $\mathfrak{n}^{+}(C_{r})$ constructed in \cite{fmv}:

\begin{prop}\label{prop 7 fmv basis cyclic}
The union of the following two sets
$$\{\mbf1_{S_{i}[l]}|1\leq i\leq r, r\nmid l\}$$
$$\{\mbf1_{S_{i}[nr]}-\mbf1_{S_{i+1}[nr]}|n\geq 1, 1\leq i\leq r-1\}$$
is a basis of $\mathfrak{n}^{+}(C_{r})$.
\end{prop}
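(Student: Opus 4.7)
The plan is to identify $\mathfrak{n}^{+}(C_{r})$ inside $\mathcal{C}(C_{r})$ via Theorem~\ref{thm 3 Ringel-Green}, exhibit each listed element as a Lie polynomial in the simple generators $\{\mbf1_{i}\}_{i\in I}$, and then match dimensions against the known root multiplicities of $A_{r-1}^{(1)}$. Under the isomorphism $\mathcal{C}(C_{r})\simeq U(\mathfrak{n}^{+}(C_{r}))$ sending $\mbf1_{i}\mapsto e_{i}$, the Lie algebra $\mathfrak{n}^{+}(C_{r})$ is recovered as the Lie subalgebra of $\mathcal{C}(C_{r})$ generated by $\{\mbf1_{i}\}_{i\in I}$, equivalently as the space of primitive elements of the enveloping algebra. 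It therefore suffices to realize every element of the two listed sets as an iterated commutator of the $\mbf1_{i}$.

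For the real-root elements $\mbf1_{S_{i}[l]}$ with $r\nmid l$, I would induct on $l$, using the short exact sequence $0\to S_{i-1}[l-1]\to S_{i}[l]\to S_{i}\to 0$ (with indices read modulo $r$ according to the orientation $\rho_{i}:i\to i+1$). The commutator $[\mbf1_{S_{i}},\mbf1_{S_{i-1}[l-1]}]$ is a bracket of two elements already known to lie in $\mathfrak{n}^{+}(C_{r})$ by the inductive hypothesis. Expanding it via the product formula of Section~\ref{3 char function}, orbit by orbit on the finite list of isomorphism classes of modules of the relevant dimension vector (cf.\ Section~\ref{4 cyclic quiver}), the non-split extensions contribute $\mbf1_{S_{i}[l]}$ with coefficient $1$, while the split-extension contributions either cancel in the commutator or are themselves, by a secondary induction on decomposable type, already in $\mathfrak{n}^{+}(C_{r})$. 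The upshot is $\mbf1_{S_{i}[l]}\in\mathfrak{n}^{+}(C_{r})$.

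For the imaginary-root elements I would take brackets of real-root elements whose dimension vectors sum to $n\delta$. A natural candidate is $[\mbf1_{S_{i}},\mbf1_{S_{i+1}[nr-1]}]$, which by the preceding step is a bracket in $\mathfrak{n}^{+}(C_{r})$. Expanded on the natural basis at grade $n\delta$, this commutator is a $\mathbb{Z}$-linear combination of the $\mbf1_{M(\pi)}$ with $\pi\in\Pi_{n\delta}$; the analysis of the filtration varieties $\mathcal{F}(S_{i},S_{i+1}[nr-1];M(\pi))$ shows that contributions from decomposable regulars cancel in the bracket, and among the $r$ indecomposable types $\{S_{k}[nr]\}_{k=1}^{r}$ only $k=i$ and $k=i+1$ admit the required subquotient structure, yielding the two characteristic functions with opposite signs. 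After rescaling if necessary, this produces exactly $\mbf1_{S_{i}[nr]}-\mbf1_{S_{i+1}[nr]}\in\mathfrak{n}^{+}(C_{r})$.

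Finally, linear independence follows because the $\mbf1_{M}$ for pairwise non-isomorphic indecomposables $M$ are independent in $\mathcal{H}(C_{r})$, and for each $n\geq 1$ the $r-1$ differences $\{\mbf1_{S_{i}[nr]}-\mbf1_{S_{i+1}[nr]}\}_{i=1}^{r-1}$ are visibly independent inside the $r$-dimensional span of $\{\mbf1_{S_{k}[nr]}\}_{k=1}^{r}$. The count agrees with $\dim\mathfrak{n}^{+}(C_{r})_{\alpha}$ in every graded piece, since for $A_{r-1}^{(1)}$ each positive real root has multiplicity one (matching one $\mbf1_{S_{i}[l]}$ per real positive root) and each $n\delta$ has multiplicity $r-1$ (matching our $r-1$ differences). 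The main obstacle is the explicit Hall-algebra computation in the imaginary case: the individual $\mbf1_{S_{k}[nr]}$ are \emph{not} primitive in $U(\mathfrak{n}^{+}(C_{r}))$, and only particular integer combinations---precisely the differences in the statement---land in $\mathfrak{n}^{+}(C_{r})$; pinning down the exact sign pattern requires careful bookkeeping of the orbit geometry of $\mathcal{F}(S_{i},S_{i+1}[nr-1];S_{k}[nr])$.
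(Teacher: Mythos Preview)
The paper does not prove this proposition; it is quoted from \cite{fmv} and used as input for Lemma~\ref{lem 7 relat of E pi and fmv}. So there is no ``paper's own proof'' to compare against, and your proposal is supplying an argument where the paper simply cites one. Your overall strategy---realize each listed element as an iterated bracket of the $\mbf1_i$, then count dimensions against the known root multiplicities of $A_{r-1}^{(1)}$---is exactly the approach of \cite{fmv}, and it works.

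Two corrections to your sketch. First, with the orientation $\rho_i:i\to i+1$ of Section~\ref{4 cyclic quiver}, the radical of $S_i[l]$ is $S_{i+1}[l-1]$, not $S_{i-1}[l-1]$; the relevant short exact sequence is $0\to S_{i+1}[l-1]\to S_i[l]\to S_i\to 0$. With this fixed, a direct computation (there are at most two isomorphism classes in the support of each product, and the split term cancels) gives
\[
[\mbf1_{S_i},\,\mbf1_{S_{i+1}[l-1]}]=\mbf1_{S_i[l]}\qquad (r\nmid l),
\]
on the nose, so your hedge ``or are themselves \ldots already in $\mathfrak n^+$'' is both unnecessary and, as stated, false: characteristic functions of decomposable modules are products of primitives, not primitives, so they never lie in $\mathfrak n^+$. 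The point is that they \emph{cancel}. Second, for $l=nr$ the same bracket now has $\text{Ext}^1(S_{i+1}[nr-1],S_i)\neq 0$ as well, producing the extra indecomposable $S_{i+1}[nr]$ on the reversed product; the split terms again cancel and one obtains exactly
\[
[\mbf1_{S_i},\,\mbf1_{S_{i+1}[nr-1]}]=\mbf1_{S_i[nr]}-\mbf1_{S_{i+1}[nr]},
\]
with no rescaling needed. After these computations your linear-independence and dimension-count paragraph finishes the proof.
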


Note that in the proposition the elements in the first set are the
real root vectors while those in the second are imaginary root
vectors.

\begin{lem}\label{lem 7 relat of E pi and fmv}
(1). For fixed $i,l$ with $1\leq i\leq r, r\nmid l$, we have
$$\mbf1_{S_{i}[l]}=E_{\pi},$$
where $\pi\in\Pi^{a}$ and $M(\pi)=S_{i}[l]$.

(2). For fixed $i,n$ with $n\geq 1, 1\leq i\leq r-1$, we have
$$\mbf1_{S_{i}[nr]}-\mbf1_{S_{i+1}[nr]}=E_{\pi_{1}}-E_{\pi_{2}},$$
where $\pi_{1},\pi_{2}\in\Pi^{a}$ and $M(\pi_{1})=S_{i}[nr]$,
$M(\pi_{2})=S_{i+1}[nr]$.
\end{lem}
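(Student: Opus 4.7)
The plan is to prove both parts by the same coefficient-comparison argument: expand the left-hand side in the $\mathbb{C}$-basis $\{E_\pi\mid\pi\in\Pi^a\}$ of $\mathcal{C}(C_r)$ provided by Proposition \ref{prop 7 basis cyclic}, and then match coefficients against the explicit form of $E_\pi$ supplied by Lemma \ref{lem 7 E pi expression}.

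First, I would observe that both $\mbf1_{S_i[l]}$ (for $r\nmid l$) and $\mbf1_{S_i[nr]}-\mbf1_{S_{i+1}[nr]}$ lie in $\mathcal{C}(C_r)\simeq U(\mathfrak{n}^+(C_r))$, since Proposition \ref{prop 7 fmv basis cyclic} exhibits them as elements of the basis of $\mathfrak{n}^+(C_r)$ constructed in \cite{fmv}. Moreover, since $S_i[l]$, $S_i[nr]$ and $S_{i+1}[nr]$ are all indecomposable, the corresponding $r$-tuples of partitions $\pi_0$, $\pi_1$, $\pi_2$ (each concentrated in a single component) are automatically aperiodic, hence genuinely index the elements $E_{\pi_0}$, $E_{\pi_1}$, $E_{\pi_2}$.

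The main step is the coefficient comparison. For part (1), set $\alpha=\dimv S_i[l]$ and write
$$\mbf1_{S_i[l]}=\sum_{\mu\in\Pi^a_\alpha}c_\mu E_\mu$$
in $\mathcal{C}(C_r)$. Lemma \ref{lem 7 E pi expression} gives $E_\mu=\mbf1_{M(\mu)}+\sum_{\lambda\in\Pi_\alpha\setminus\Pi^a_\alpha,\lambda<\mu}g^\mu_\lambda\mbf1_{M(\lambda)}$, so every correction term involves only non-aperiodic $\mbf1_{M(\lambda)}$. Consequently, the coefficient of $\mbf1_{M(\mu')}$ with $\mu'\in\Pi^a_\alpha$ on the right-hand side equals $c_{\mu'}$, while on the left it is $\delta_{\mu',\pi_0}$. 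Therefore $c_{\pi_0}=1$ and all other $c_{\mu'}=0$, yielding $\mbf1_{S_i[l]}=E_{\pi_0}$. Part (2) is proved by the same method: expanding $\mbf1_{S_i[nr]}-\mbf1_{S_{i+1}[nr]}=\sum_{\mu\in\Pi^a_{n\delta}}d_\mu E_\mu$ and matching aperiodic coefficients forces $d_{\pi_1}=1$, $d_{\pi_2}=-1$, and all other $d_\mu=0$, producing $E_{\pi_1}-E_{\pi_2}$.

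I expect the only real subtlety to be a pair of definitional checks underpinning the coefficient matching: that $S_i[l]$, $S_i[nr]$, $S_{i+1}[nr]$ correspond to aperiodic $r$-tuples of partitions (immediate since their tuples are supported on a single vertex), and that the $\mathbb{N}[I]$-grading on $\mathcal{H}(C_r)$ reduces each expansion to a finite sum over $\Pi^a_\alpha$. A pleasant feature of this approach is that the correction coefficients $g^\pi_\lambda$ of Lemma \ref{lem 7 E pi expression} need never be computed: in case (1) they must all vanish, and in case (2) those arising from $\pi_1$ must agree with those arising from $\pi_2$, both as formal consequences of the matching.
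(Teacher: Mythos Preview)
Your proposal is correct and takes essentially the same approach as the paper: both use Proposition~\ref{prop 7 fmv basis cyclic} to place the left-hand side in $\mathcal{C}(C_r)$ and Lemma~\ref{lem 7 E pi expression} to isolate the aperiodic leading term of $E_\pi$. The paper phrases the final step slightly more directly---observing that $E_\pi-\mbf1_{S_i[l]}\in\mathcal{C}(C_r)\cap\mathcal{P}(C_r)=0$ via Lemma~\ref{lem 7 decomp of Hall alg}---whereas you reach the same conclusion by expanding in the $\{E_\mu\}$-basis and matching aperiodic coefficients, but the content is the same.
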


\begin{proof}
We just prove (1), the proof of (2) is completely similar.

By lemma \ref{lem 7 E pi expression},
$$E_{\pi}=\mbf1_{S_{i}[l]}+\sum_{\lambda\in\Pi_{\alpha}\setminus\Pi^{a}_{\alpha},\lambda<\pi}g^{\pi}_{\lambda}\mbf1_{M(\lambda)}$$

We have known that
$\mbf1_{S_{i}[l]}\in\mathfrak{n}^{+}(C_{r})\subset\mathcal{C}(C_{r})$.
But the second term in the formula above is in $\mathcal{P}(C_{r})$.
Thus by Lemma \ref{lem 7 decomp of Hall alg} it must be zero.
\end{proof}

\section{Integral bases: The general affine case}\label{sec 8}

Now we consider general tame quivers. In this section let
$Q=(I,\Omega,s,t)$ be a tame quiver without oriented cycles. We will
use the notations in \ref{4 tame quiver}.

\subsection{Embedding of the module category of Kronecker
quiver}\label{8 embed of kronecker} Let $K$ be the Kronecker quiver
(see \ref{4 tame quiver} and section \ref{sec 6}). If $Q\neq K$, the
main difference between $\rep(K)$ and $\rep(Q)$ is that the regular
component of $\rep(K)$ only consists of homogeneous tubes, while
$\rep(Q)$ has $s$ non-homogeneous tubes. A well-known result in
representation theory of tame quivers is that $\rep(K)$ can be
embedded into $\rep(Q)$.

To make it more precise, we need more notations. In the rest of this
section $\delta$ denotes the minimal imaginary root of $Q$, and the
minimal imaginary root of $K$ is denoted by $\delta_{K}$. For the
modules in $\rep(K)$ and in $\rep(Q)$ we distinguish them by putting
different superscripts $K$ and $Q$ respectively.

\begin{lem}\label{lem 8 embed kronecker}
There exists a fully faithful, exact functor
$F:\rep(K)\hookrightarrow\rep{Q}$ which satisfies

(1). $F(P^{K})\in\prep(Q)$, $F(I^{K})\in\prei(Q)$ for all
$P^{K}\in\prep(K)$, $I^{K}\in\prei(K)$.

(2). $F(S_{x,l}^{K})=S_{x,l}^{Q}$ for all
$x\in\mathbb{P}^{1}\setminus J$ and $l\geq 1$.

(3). For each $1\leq j\leq s$ there exists $1\leq k_{j}\leq r_{j}$
such that $F(S_{j,l}^{K})=S_{j,k_{j},lr_{j}}^{Q}$ for all $l\geq 1$.
\end{lem}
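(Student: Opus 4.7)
The embedding $F$ is essentially classical in the representation theory of tame hereditary algebras (see \cite{dr}); my plan is to obtain it via perpendicular categories, after first motivating the construction on the regular part.

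On regular modules the definition is forced by (2) and (3): for each non-homogeneous tube $\mathcal{T}_{j}$ fix any quasi-simple $S_{j,k_{j},1}^{Q}$ and set $F(S_{j,l}^{K})=S_{j,k_{j},lr_{j}}^{Q}$, while for each homogeneous tube set $F(S_{x,l}^{K})=S_{x,l}^{Q}$. For each $j$, a direct computation inside $\mathcal{T}_{j}\simeq\rep_{0}(C_{r_{j}})$ (Lemma \ref{lem 4 regular component}) shows that $\{S_{j,k_{j},nr_{j}}^{Q}\}_{n\geq 1}$ generates an extension-closed abelian subcategory equivalent to the homogeneous tube $\rep_{0}(C_{1})$: under the equivalence these modules correspond to $\{S_{k_{j}}[nr_{j}]\}_{n\geq 1}$, with $\End(S_{k_{j}}[nr_{j}])\cong\mathbb{C}[t]/(t^{n})$ and $\tau S_{k_{j}}[nr_{j}]=S_{k_{j}+1}[nr_{j}]$, reproducing exactly the endomorphism and Auslander--Reiten structure of a rank-one tube. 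Matching of $\Hom$ and $\Ext^{1}$ between distinct tubes is automatic, since distinct regular components of $\Gamma_{Q}$ are orthogonal.

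The extension of $F$ to the preprojective and preinjective components is the substantial step, which is best handled via perpendicular calculus. Let $\mathcal{E}=\{S_{j,i,1}^{Q}\mid 1\leq j\leq s,\ i\neq k_{j}\}$; by Lemma \ref{lem 4 non-hom tubes and multi of im root} this is a set of $|I|-2$ exceptional modules, arranged inside each tube $\mathcal{T}_{j}$ as an $A_{r_{j}-1}$-configuration. By Schofield's theorem on perpendicular subcategories (in the refinement due to Crawley-Boevey), the right perpendicular $\mathcal{E}^{\perp}$ is equivalent to the module category of a hereditary algebra with exactly two simple modules, and the inclusion $\mathcal{E}^{\perp}\hookrightarrow\rep(Q)$ furnishes the desired functor $F$. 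Property (1) follows because preprojective and preinjective modules in $\rep(Q)$ are orthogonal to regulars (Proposition \ref{prop 4 repdirect}) and so lie in $\mathcal{E}^{\perp}$; property (2) follows because modules $S_{x,l}^{Q}$ in a homogeneous tube lie in a different component of $\Gamma_{Q}$ from $\mathcal{E}$, hence are orthogonal to it; and property (3) holds because within each $\mathcal{T}_{j}$ the only indecomposables in $\mathcal{E}^{\perp}$ are those whose quasi-composition series uses only $S_{j,k_{j},1}^{Q}$, namely the $S_{j,k_{j},lr_{j}}^{Q}$.

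The main obstacle is identifying the perpendicular algebra with $\mathbb{C}K$: one must check that the two simple objects of $\mathcal{E}^{\perp}$ have vanishing $\Hom$'s to each other and a two-dimensional $\Ext^{1}$ in one direction, so that the resulting Cartan matrix is precisely the Kronecker one. This reduces to a computation with the Euler form on $\mathbb{Z}[I]$ restricted to the sublattice orthogonal to the dimension vectors of the modules in $\mathcal{E}$, and relies crucially on $Q$ being tame and simply-laced so that the only admissible tame quiver on two vertices is $K$ itself. Once this identification is in place, all three properties follow from the orthogonality arguments sketched above.
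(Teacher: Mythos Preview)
The paper does not prove this lemma at all: it is stated as a ``well-known result in representation theory of tame quivers'' with an implicit appeal to \cite{dr}, and the text moves on immediately to its consequences. Your perpendicular-category construction is one of the standard ways to obtain such an embedding, so you are supplying more than the paper does; the identification of $\mathcal{E}^{\perp}$ with a module category on two vertices via Schofield's theorem, and the verification of (2) and (3) through orthogonality of distinct tubes and the quasi-composition structure inside each $\mathcal{T}_{j}$, are essentially correct.

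Your argument for property (1), however, has a real gap. You assert that preprojective and preinjective $\mathbb{C}Q$-modules ``are orthogonal to regulars (Proposition~\ref{prop 4 repdirect}) and so lie in $\mathcal{E}^{\perp}$''. Proposition~\ref{prop 4 repdirect} gives only one-sided vanishing: for $P\in\prep(Q)$ and $R\in\reg(Q)$ one has $\Hom(R,P)=0$ and $\Ext^{1}(P,R)=0$, but membership in the right perpendicular $\mathcal{E}^{\perp}$ also requires $\Ext^{1}(R,P)=0$, and by the Auslander--Reiten formula $\Ext^{1}(R,P)\cong D\Hom(P,\tau R)$ is typically nonzero. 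The dual objection applies to preinjectives, which receive nonzero maps from regulars. Indeed $\mathcal{E}^{\perp}\simeq\rep(K)$ has only one indecomposable preprojective per dimension vector $(m{+}1,m)$, far fewer than $\prep(Q)$, so the inclusion you claim is false and the deduction of (1) collapses. Even granting the false inclusion, you would still owe an argument that preprojectives of $K$ go to preprojectives (rather than preinjectives) of $Q$.

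To repair (1) you should instead identify the two simple objects of $\mathcal{E}^{\perp}$ explicitly as a particular indecomposable preprojective and a particular indecomposable preinjective of $Q$ (e.g.\ by a defect computation, or by checking directly that the projective and injective at an extending vertex satisfy the perpendicularity conditions), and then use that the full exact embedding commutes with the relative Auslander--Reiten translation to propagate this along the preprojective and preinjective components.
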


The embedding functor $F:\rep(K)\hookrightarrow\rep{Q}$ gives rise
to an injective morphism between the corresponding Hall algebras
$\mathcal{H}(K)\hookrightarrow\mathcal{H}(Q)$, which we still
denoted by $F$. Namely $F(\mbf1_{M^{K}})=\mbf1_{F(M^{K})}$ for any
$M^{K}\in\rep(K)$.

Note that by (1) in the above lemma, $F(S_{i}^{K})\in\prep(Q)$ or
$\prei(Q)$ for each simple module $S^{K}$ in $\rep(K)$. Hence
$F(\mbf1_{S^{K}})\in\mathcal{C}_{\mathbb{Z}}(Q)$.

So we have proved the following:

\begin{lem}\label{lem 8 embed kro comp alg}
$F:\mathcal{H}(K)\hookrightarrow\mathcal{H}(Q)$ restricts to an
injective morphism $F:\mathcal{C}(K)\hookrightarrow\mathcal{C}(Q)$
and also
$F:\mathcal{C}_{\mathbb{Z}}(K)\hookrightarrow\mathcal{C}_{\mathbb{Z}}(Q)$.
\end{lem}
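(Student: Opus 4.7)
The plan is to reduce everything to the generators of the composition algebras. Since $F\colon\mathcal{H}(K)\hookrightarrow\mathcal{H}(Q)$ is already an injective algebra morphism (this follows from $F$ being exact and fully faithful, so that the variety $\mathcal{F}(M,N;L)$ computing a Hall product on the $K$-side corresponds bijectively to $\mathcal{F}(F(M),F(N);F(L))$ on the $Q$-side), it suffices to verify that $F$ sends a generating set of $\mathcal{C}(K)$ (resp.\ $\mathcal{C}_{\mathbb{Z}}(K)$) into $\mathcal{C}(Q)$ (resp.\ $\mathcal{C}_{\mathbb{Z}}(Q)$); the injectivity of the restriction is then automatic from the injectivity of $F$ on $\mathcal{H}$.

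First I would handle the $\mathbb{C}$-version. By definition $\mathcal{C}(K)$ is generated by $\mbf1_{S_1^K}$ and $\mbf1_{S_2^K}$, and by Lemma~\ref{lem 8 embed kronecker}(1) each $F(S_i^K)$ is an indecomposable preprojective or preinjective module of $Q$. Therefore $F(\mbf1_{S_i^K})=\mbf1_{F(S_i^K)}$ is the characteristic function of an indecomposable in $\prep(Q)\cup\prei(Q)$, and Propositions~\ref{prop 5 bases of preinj} and~\ref{prop 5 bases of preproj} immediately give $\mbf1_{F(S_i^K)}\in\mathcal{C}_{\mathbb{Z}}(Q)\subset\mathcal{C}(Q)$ (this is precisely the observation highlighted in the paragraph preceding the lemma). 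Hence $F(\mathcal{C}(K))\subset\mathcal{C}(Q)$.

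For the integral version, a generating set is $\{\mbf1_{S_i^K}^{(n)}\mid i=1,2,\ n\in\mathbb{N}\}$. Because $F$ is an algebra morphism over $\mathbb{C}$, I would compute
$$F\bigl(\mbf1_{S_i^K}^{(n)}\bigr)=F(\mbf1_{S_i^K})^{(n)}=\mbf1_{F(S_i^K)}^{(n)}.$$
The module $F(S_i^K)$ is indecomposable preprojective or preinjective, hence exceptional, so Lemma~\ref{lem 3 copies of exc} rewrites the right-hand side as $\mbf1_{nF(S_i^K)}$. Now $nF(S_i^K)$ is itself preprojective or preinjective, so applying Lemma~\ref{lem 5 char funct of preinj} or Lemma~\ref{lem 5 char funct of preproj} (equivalently, Proposition~\ref{prop 5 bases of preinj} or Proposition~\ref{prop 5 bases of preproj}) expresses it as a polynomial in divided powers of simples of $Q$ with integer coefficients, placing it in $\mathcal{C}_{\mathbb{Z}}(Q)$.

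There is essentially no obstacle beyond what has already been set up; the lemma is a bookkeeping consequence of the fact that $F$ is a morphism of Hall algebras together with the integrality results of Section~\ref{sec 5}. The one slightly delicate point is the passage from divided powers on the $K$-side to divided powers on the $Q$-side, which is what allows the statement to be proved at the integral level rather than only over $\mathbb{C}$; this is resolved cleanly because the image $F(S_i^K)$ has no self-extensions, so Lemma~\ref{lem 3 copies of exc} turns $F(\mbf1_{S_i^K})^{(n)}$ into a genuine orbit characteristic function lying in $\mathcal{C}_{\mathbb{Z}}(Q)$.
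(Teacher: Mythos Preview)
Your proposal is correct and follows the same route as the paper: the paper's ``proof'' consists of the single observation preceding the lemma that $F(S_i^K)$ lands in $\prep(Q)\cup\prei(Q)$, whence $F(\mbf1_{S_i^K})\in\mathcal{C}_{\mathbb{Z}}(Q)$ by Section~\ref{sec 5}. You supply more detail than the paper does, in particular the explicit handling of divided powers via Lemma~\ref{lem 3 copies of exc} for the integral statement, which the paper leaves implicit.
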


Recall that the sets $\{M_{\omega\delta_{K}}|\omega\vdash
n,n\in\mathbb{N}\}$, $\{H_{\omega\delta_{K}}|\omega\vdash
n,n\in\mathbb{N}\}$, $\{E_{\omega\delta_{K}}|\omega\vdash
n,n\in\mathbb{N}\}$ are $\mathbb{Z}$-bases of
$\mathcal{C}_{\mathbb{Z}}(K)^{reg}$ (Proposition \ref{prop 6 basis
kronecker}, Corollary \ref{cor 6 other basis }). Set
$M_{\omega\delta}=F(M_{\omega\delta_{K}})$,
$H_{\omega\delta}=F(H_{\omega\delta_{K}})$ and
$E_{\omega\delta}=F(E_{\omega\delta_{K}})$ for all $\omega\vdash
n,n\in\mathbb{N}$. We also define $P_{n\delta}$ to be
$F(P_{n\delta_{K}})$ for any $n\in\mathbb{N}$. By the above lemma,
$M_{\omega\delta},H_{\omega\delta},E_{\omega\delta}\in\mathcal{C}_{\mathbb{Z}}(Q)$.

\subsection{A basis of $\mathfrak{n}^{+}(Q)$}\label{8 basis of lie alg}
In \cite{fmv}, a basis of $\mathfrak{n}^{+}(Q)$ has been given:

\begin{prop}\label{prop 8 fmv tame} The union of the following sets
$$\{\mbf1_{P},\mbf1_{I}|P\in\prep(Q), I\in\prei(Q) \ \text{and}\ P,I\ \text{indecomposable}\},$$
$$\{\mbf1_{S_{j,i,l}}|1\leq j \leq s, 1\leq i\leq r_{j}, r_{j}\nmid l\},$$
$$\{\mbf1_{S_{j,i,kr_{j}}}-\mbf1_{S_{j,i+1,kr_{j}}}|1\leq j\leq s, 1\leq i\leq r_{j}-1, k\geq 1\},$$
$$\{P_{n\delta}|n\geq 1\};$$
forms a $\mathbb{Z}$-basis of $\mathfrak{n}^{+}(Q)$.
\end{prop}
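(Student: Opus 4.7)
The plan is to establish the statement by decomposing $\mathfrak{n}^{+}(Q)$ into its root spaces under the $\mathbb{N}[I]$-grading (via Theorem \ref{thm 3 Ringel-Green}) and then, for each positive root, verifying that the listed elements (i) lie in $\mathfrak{n}^{+}(Q)\subset\mathcal{C}(Q)$, (ii) are linearly independent in that root space, and (iii) are the right number to span it. The root-space grading makes elements from different roots automatically independent, so the argument reduces to a root-by-root check.

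For real roots I would proceed as follows. By Theorem \ref{thm 4 Kac}, each $\alpha\in\Delta^{re}_{+}$ corresponds to a unique indecomposable $M(\alpha)$, and the associated root space is one-dimensional. If $M(\alpha)$ is preprojective or preinjective, then $\mbf1_{M(\alpha)}\in\mathcal{C}_{\mathbb{Z}}(Q)\cap\mathfrak{n}^{+}(Q)$ by Lemma \ref{lem 5 obtain prei} (a reflection-functor reduction to a Chevalley generator). If $M(\alpha)=S_{j,i,l}^{Q}$ with $r_{j}\nmid l$ sits in a non-homogeneous tube, I would transport the statement inside $\mathcal{T}_{j}\simeq\rep_{0}(C_{r_{j}})$ and invoke the cyclic-quiver machinery of Section \ref{sec 7}, specifically Lemma \ref{lem 7 relat of E pi and fmv}(1), which expresses $\mbf1_{S_{i}[l]}$ as an element $E_{\pi}\in\mathcal{C}_{\mathbb{Z}}(C_{r_{j}})$. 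In each case, showing the element is primitive for the Lie bracket (rather than a product of lower-degree elements) uses the one-dimensionality of the root space.

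For an imaginary root $n\delta$, Lemma \ref{lem 4 non-hom tubes and multi of im root} gives $\dim\mathfrak{n}^{+}(Q)_{n\delta}=|I|-1=1+\sum_{j=1}^{s}(r_{j}-1)$. The proposed basis contributes exactly $r_{j}-1$ elements from each non-homogeneous tube (the differences $\mbf1_{S_{j,i,nr_{j}}}-\mbf1_{S_{j,i+1,nr_{j}}}$, $1\leq i\leq r_{j}-1$) together with the single element $P_{n\delta}$, for a total matching the dimension. To see these lie in $\mathfrak{n}^{+}(Q)$, I would combine two ingredients: the tube-differences come from Lemma \ref{lem 7 relat of E pi and fmv}(2) applied inside each $\mathcal{T}_{j}$, while $P_{n\delta}$ is obtained by pulling back the corresponding Kronecker element along the embedding $F$ of Lemma \ref{lem 8 embed kro comp alg}, whose membership in the composition subalgebra was settled through the relations in Lemma \ref{lem 6 puzhang} and Lemma \ref{lem 6 relation of P and H}. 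Linear independence is then clear from supports: the $r_{j}-1$ differences for fixed $j$ are independent because their non-zero values occur on modules built from distinct quasi-simples in $\mathcal{T}_{j}$, differences associated to different $j$ are supported on different tubes, and $P_{n\delta}$ is supported on indecomposable regulars, hence in particular detects the homogeneous part which the tube-differences do not see.

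The main obstacle, and the step where care is needed, is verifying that each candidate imaginary-root vector actually lies in $\mathfrak{n}^{+}(Q)$ rather than merely in $\mathcal{H}(Q)$. For $P_{n\delta}$ this is handled through the Kronecker computation (lifted via $F$), but for the differences arising from a non-homogeneous tube $\mathcal{T}_{j}$ it requires identifying each $\mbf1_{S_{j,i,nr_{j}}}-\mbf1_{S_{j,i+1,nr_{j}}}$ with a suitable commutator of real root vectors inside the cyclic-quiver Hall algebra, for which the self-contained framework of Section \ref{sec 7} (notably Proposition \ref{prop 7 fmv basis cyclic} together with the decomposition Lemma \ref{lem 7 decomp of Hall alg}) is precisely what is needed. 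Once this is in place, matching dimensions with the count $1+\sum_{j}(r_{j}-1)$ closes the argument and recovers Proposition \ref{prop 8 fmv tame} in the form originally proved in \cite{fmv}.
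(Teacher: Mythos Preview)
The paper does not prove this proposition: it is quoted verbatim from \cite{fmv} with no argument, and is then \emph{used} as input (e.g.\ in the proof of the lemma in \S\ref{8 basis non-hom tube}, where membership of $\mbf1_{S_{j,i,1}}$ in $\mathcal{C}_{\mathbb{Z}}(Q)$ is deduced from the fact that Proposition \ref{prop 8 fmv tame} declares it a real root vector). So there is no ``paper's own proof'' to compare against; your task was really to supply an independent argument.

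Your sketch is a reasonable outline of how one might reduce the general tame case to the Kronecker and cyclic-quiver cases, but it does not stand on its own. Two concrete issues. First, the results you invoke from Sections \ref{sec 6} and \ref{sec 7}---Proposition \ref{prop 6 fmv kronecker}, Proposition \ref{prop 7 fmv basis cyclic}, and Lemma \ref{lem 7 relat of E pi and fmv}---are themselves statements quoted from \cite{fmv} (or proved using them); so your argument is a reduction to other \cite{fmv} results rather than a self-contained proof. Second, and more seriously, the step ``showing the element is primitive for the Lie bracket \dots\ uses the one-dimensionality of the root space'' is not valid as written. Knowing that $\mbf1_{M}\in\mathcal{C}(Q)_{\alpha}$ with $\alpha$ a real root does \emph{not} force $\mbf1_{M}\in\mathfrak{n}^{+}(Q)$: the graded piece $U(\mathfrak{n}^{+})_{\alpha}$ typically has dimension larger than one, and one-dimensionality of $\mathfrak{n}^{+}_{\alpha}$ says nothing about which element of $U(\mathfrak{n}^{+})_{\alpha}$ you hold. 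For the preprojective/preinjective indecomposables your reflection-functor argument does give a genuine root vector (since $r_{i}$ is a Lie-algebra automorphism applied to a Chevalley generator), but for the regular real-root vectors $\mbf1_{S_{j,i,l}}$ and for the imaginary differences you still owe an explanation---either an explicit iterated commutator expression, or an appeal to the coproduct/primitivity criterion---of why these particular constructible functions land in $\mathfrak{n}^{+}$ rather than merely in $\mathcal{C}(Q)$. That is exactly the content supplied in \cite{fmv}.
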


Note that in this proposition, $\mbf1_{P}$, $\mbf1_{I}$ and
$\mbf1_{S_{j,i,l}}$ correspond to the real root vectors while
$\mbf1_{S_{j,i,kr_{j}}}-\mbf1_{S_{j,i+1,kr_{j}}}$ and $P_{n\delta}$
correspond to the imaginary root vectors. One can check the
multiplicity, recall Lemma \ref{lem 4 non-hom tubes and multi of im
root}.


\subsection{Basis elements arising from non-homogeneous tubes}\label{8 basis non-hom tube}
For any non-homogeneous tube $\mathcal{T}_{j}$ ($1\leq j\leq s$),
which is an extension-closed abelian subcategory of $\rep(Q)$ (see
\ref{4 tame quiver}), we can define the following subalgebras of
$\mathcal{H}(Q)$: $\mathcal{H}(\mathcal{T}_{j})$ is the
$\mathbb{C}$-subalgebra generated by all constructible functions
whose supports are contained in $\mathcal{T}_{j}$.
$\mathcal{C}(\mathcal{T}_{j})$ is the $\mathbb{C}$-subalgebra of
generated by $\mbf1_{S_{j,i,1}}$ for all $1\leq i\leq r_{j}$. And
$\mathcal{C}_{\mathbb{Z}}(\mathcal{T}_{j})$ is defined to be the
$\mathbb{Z}$-subalgebra of $\mathcal{H}(\mathcal{T}_{j})$ generated
by $\mbf1_{S_{j,i,1}}^{(t)}$ for all $1\leq i\leq r_{j}$ and $t\geq
1$.

we have the following result:

\begin{lem}
For any $1\leq j\leq s$, the subalgebra
$\mathcal{C}(\mathcal{T}_{j})$ is contained in $\mathcal{C}(Q)$. And
$\mathcal{C}_{\mathbb{Z}}(\mathcal{T}_{j})$ is a
$\mathbb{Z}$-subalgebra of $\mathcal{C}_{\mathbb{Z}}(Q)$.
\end{lem}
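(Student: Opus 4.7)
The plan is to show that each generator $\mbf1_{S_{j,i,1}}^{(t)}$ of $\mathcal{C}_{\mathbb{Z}}(\mathcal{T}_j)$ lies in $\mathcal{C}_{\mathbb{Z}}(Q)$; the $\mathbb{C}$-inclusion $\mathcal{C}(\mathcal{T}_j)\subseteq\mathcal{C}(Q)$ will then follow a fortiori. Since $\dimv S_{j,i,1}$ is a real root by Lemma \ref{lem 4 im root and reg mod}(2), the module $S_{j,i,1}$ is exceptional, so Lemma \ref{lem 3 copies of exc} gives $\mbf1_{S_{j,i,1}}^{(t)}=\mbf1_{tS_{j,i,1}}$. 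The $\mathbb{C}$-inclusion is already immediate from Proposition \ref{prop 8 fmv tame} of Frenkel--Malkin--Vybornov: the element $\mbf1_{S_{j,i,1}}$ appears in their basis of $\mathfrak{n}^+(Q)$, and therefore belongs to $U^+\cong\mathcal{C}(Q)$ via the Ringel--Green isomorphism of Theorem \ref{thm 3 Ringel-Green}.

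For integrality, the strategy is to reduce $S_{j,i,1}$ to a simple module by a finite sequence of BGP reflection functors. Since $\dimv S_{j,i,1}$ lies in the Weyl-group orbit of some simple root $\alpha_{i_m}$, I would construct a chain of sink/source reflections of successively reflected quivers which eventually moves $S_{j,i,1}$ into the preinjective component of some reflected quiver, at which stage Lemma \ref{lem 5 obtain prei} expresses it as the image of the simple generator $\mbf1_{i_m}$ under an admissible source sequence. By Lemma \ref{lem 5 ref funct equal to Kac auto}, each $\sigma_l^{\pm}$ acts on $\mathcal{C}_{\mathbb{Z}}(Q)[l]$ as the Kac automorphism $r_l$, which is an automorphism of $U_{\mathbb{Z}}$ and commutes with divided powers (since $r_l(x^{(n)})=r_l(x)^{(n)}$). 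Consequently $\mbf1_{S_{j,i,1}}^{(t)}$ is the image of $\pm\mbf1_{i_m}^{(t)}$ under a composition of such Kac automorphisms, and hence lies in $\mathcal{C}_{\mathbb{Z}}(Q)$; this yields the desired containment $\mathcal{C}_{\mathbb{Z}}(\mathcal{T}_j)\subseteq\mathcal{C}_{\mathbb{Z}}(Q)$.

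The main obstacle is the first step: moving a regular quasi-simple $S_{j,i,1}$ into the preinjective component of a suitably reflected quiver, since, unlike the preinjective case treated in Section \ref{sec 5}, a single admissible source (or sink) sequence does not suffice for regular real root modules. One must alternate sinks and sources, verifying at each stage that the module being tracked does not coincide with the simple at the chosen vertex (so that the reflection is nontrivial on it), and carefully tracing how the tube $\mathcal{T}_j$ transforms under each reflection so that one can detect when $S_{j,i,1}$ has crossed over into the preinjective component. Once such a sequence is in hand, the remaining verification is purely formal, reducing to Lemmas \ref{lem 5 obtain prei} and \ref{lem 5 ref funct equal to Kac auto}.
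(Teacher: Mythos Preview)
Your approach has a genuine gap at precisely the point you flag as the main obstacle: for a tame quiver, BGP reflection functors \emph{never} move an indecomposable regular module out of the regular component. Each $\sigma_i^{\pm}$ restricts to an equivalence between $\rep(Q)[i]$ and $\rep(\sigma_i Q)[i]$, and an equivalence of categories carries the Auslander--Reiten quiver of one to that of the other; in particular tubes go to tubes. Thus no sequence of reflections---however you alternate sinks and sources---will carry $S_{j,i,1}$ into a preinjective component of any reflected quiver, and Lemma~\ref{lem 5 obtain prei} is never applicable. The obstacle you identify is not a technical nuisance to be overcome by bookkeeping; it is an actual obstruction.

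The paper's argument avoids the module category altogether at this step. By Proposition~\ref{prop 8 fmv tame} the element $\mbf1_{S_{j,i,1}}$ is a Chevalley real-root vector of $\mathfrak{g}(Q)$; since every real root lies in the Weyl orbit of a simple root, $\mbf1_{S_{j,i,1}}$ is, up to sign, the image of some $\mbf1_{i_m}$ under a composition of Kac automorphisms $r_{i_1}\cdots r_{i_{m-1}}$ of the \emph{full} enveloping algebra $U$. These automorphisms preserve $U_{\mathbb{Z}}$ (see~\ref{5 Kac automorphism}), so $\mbf1_{S_{j,i,1}}\in U_{\mathbb{Z}}\cap U^{+}=U^{+}_{\mathbb{Z}}\cong\mathcal{C}_{\mathbb{Z}}(Q)$, and the divided powers follow since the $r_k$ are algebra automorphisms. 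The point you were missing is that the $r_k$ act on all of $U$ and may pass through $U^{-}$ at intermediate stages (recall $r_k(e_k)=-f_k$); the BGP functors, which correspond to $r_k$ only on the restricted domain $\mathcal{C}_{\mathbb{Z}}(Q)[k]\subset U^{+}$ via Lemma~\ref{lem 5 ref funct equal to Kac auto}, cannot imitate this.
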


\begin{proof}
We claim that $\mbf1_{S_{j,i,1}}\in\mathcal{C}_{\mathbb{Z}}(Q)$ for
all $1\leq i\leq r_{j}$.

By Proposition \ref{prop 8 fmv tame} we see that $\mbf1_{S_{j,i,1}}$
is a real root vector of the Lie algebra $\mathfrak{g}(Q)$, thus it
can be obtained from $\mbf1_{i}$ for some $i$ by a series of
automorphisms $r_{k}$ (see \ref{5 Kac automorphism}). This forces
$\mbf1_{S_{j,i,1}}\in\mathcal{C}_{\mathbb{Z}}(Q)$.

The lemma follows immediately.
\end{proof}

Note that for any homogeneous tube $\mathcal{T}_{x}$,
$x\in\mathbb{P}^{1}\setminus J$, we can define the subalgebras
$\mathcal{H}(\mathcal{T}_{x})$, $\mathcal{C}(\mathcal{T}_{x})$
similarly. But $\mathcal{C}(\mathcal{T}_{x})$ does not contained in
$\mathcal{C}(Q)$ any more.

Recall \ref{4 tame quiver} that we have the isomorphic functor
$F_{j}:\rep_{0}(C_{r_{j}})\xrightarrow{\sim}\mathcal{T}_{j}$. This
induces an isomorphism of the corresponding Hall algebra
$\mathcal{H}(C_{r_{j}})\simeq\mathcal{H}(\mathcal{T}_{j})$, which we
still denoted by $F_{j}$. Obviously $F_{j}$ restricts to an
isomorphism
$\mathcal{C}_{\mathbb{Z}}(C_{r_{j}})\simeq\mathcal{C}_{\mathbb{Z}}(\mathcal{T}_{j})$.

We have to introduce more notations to distinguish objects in
various $\rep_{0}(C_{r_{j}})$ and $\mathcal{H}(C_{r_{j}})$. Let
$\Pi_{j}^{a}$ denote the set of aperiodic $r_{j}$-tuples of
partitions. So for $\pi\in\Pi_{j}^{a}$,
$M(\pi)\in\rep_{0}(C_{r_{j}})$, let
$M(\pi)_{j}=F_{j}(M(\pi))\in\mathcal{T}_{j}$.

By Proposition \ref{prop 7 basis cyclic} the set
$\{E_{\pi}|\pi\in\Pi_{j}^{a}\}$ is a $\mathbb{Z}$-basis of
$\mathcal{C}_{\mathbb{Z}}(C_{r_{j}})$. For $1\leq j\leq s$ and
$\pi\in\Pi_{j}^{a}$, let $E_{\pi,j}=F_{j}(E_{\pi})$. Thus for any
$j$, $\{E_{\pi,j}|\pi\in\Pi_{j}^{a}\}$ is a $\mathbb{Z}$-basis of
$\mathcal{C}_{\mathbb{Z}}(\mathcal{T}_{j})$.

\subsection{Main result: $\mathbb{Z}$-bases of
$\mathcal{C}_{\mathbb{Z}}(Q)$}\label{8 integral basis} Now we can
state the main result in this paper. Let $\mathcal{J}$ be the set of
quadruples $\mbfc=(P_{\mbfc},I_{\mbfc},\pi_{\mbfc},\omega_{\mbfc})$
where $P_{\mbfc}\in\prep(Q)$, $I_{\mbfc}\in\prei(Q)$,
$\pi_{\mbfc}=(\pi_{\mbfc 1},\pi_{\mbfc 2},\cdots,\pi_{\mbfc s})$,
each $\pi_{\mbfc j}\in\Pi_{j}^{a}$ and $\omega_{\mbfc}\vdash
n,n\in\mathbb{N}$.

For each $\mbfc\in\mathcal{J}$ we define
$$B_{\mbfc}=\mbf1_{P_{\mbfc}}E_{\pi_{\mbfc 1},1}E_{\pi_{\mbfc 2},2}\cdots E_{\pi_{\mbfc s},s}M_{\omega_{\mbfc}\delta}\mbf1_{I_{\mbfc}},$$

\begin{thm}\label{thm 8 main}
The set $\{B_{\mbfc}|\mbfc\in\mathcal{J}\}$ is a $\mathbb{Z}$-basis
of $\mathcal{C}_{\mathbb{Z}}(Q)$.
\end{thm}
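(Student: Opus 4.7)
The plan is to imitate the strategy used for the Kronecker quiver in Proposition~\ref{prop 6 basis kronecker}, globalising it to $Q$ by using the AR-structure of $\rep(Q)$ as a disjoint union (preprojective, regular, preinjective), with the regular part itself decomposed as the non-homogeneous tubes $\mathcal{T}_1,\dots,\mathcal{T}_s$ together with the image of the Kronecker embedding $F$ that controls the homogeneous tubes and supplies the generators indexed by $\omega\vdash n$. The argument splits into two parts: first I would prove the set spans $\mathcal{C}(Q)$ over $\mathbb{C}$ and is $\mathbb{C}$-linearly independent, and then upgrade the coefficients to $\mathbb{Z}$.

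For the $\mathbb{C}$-basis claim, I would combine Proposition~\ref{prop 8 fmv tame}, the PBW theorem for $U(\mathfrak{n}^{+}(Q))$, and Theorem~\ref{thm 3 Ringel-Green} to obtain an initial $\mathbb{C}$-basis of $\mathcal{C}(Q)$ consisting of ordered monomials in the FMV generators, the ordering following Lemma~\ref{lem 4 rep direct} on $\Delta_{+}^{prep}$ and $\Delta_{+}^{prei}$ together with the tube decomposition of $\reg(Q)$. The real-root contributions from a non-homogeneous tube $\mathcal{T}_{j}$ can then be rewritten as $\mathbb{C}$-linear combinations of $\{E_{\pi,j}\mid\pi\in\Pi_{j}^{a}\}$ using Proposition~\ref{prop 7 basis cyclic} and Lemma~\ref{lem 7 relat of E pi and fmv}, while the imaginary-root part, which lives in the image of $F$, can be converted from $P_{n\delta}$-monomials into $M_{\omega\delta}$-monomials by applying $F$ to Lemma~\ref{lem 6 relation of P and H} and Lemma~\ref{lem 6 kronecker basis M}. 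Each replacement is an invertible $\mathbb{C}$-linear change of basis, so the set $\{B_{\mbfc}\mid\mbfc\in\mathcal{J}\}$ is a $\mathbb{C}$-basis of $\mathcal{C}(Q)$.

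For integrality, the individual generators $\mbf1_{P}$, $\mbf1_{I}$, $E_{\pi,j}$ and $M_{\omega\delta}$ all lie in $\mathcal{C}_{\mathbb{Z}}(Q)$ by Propositions~\ref{prop 5 bases of preproj} and~\ref{prop 5 bases of preinj}, by the lemma proved in Section~\ref{8 basis non-hom tube} on non-homogeneous tubes, and by Lemma~\ref{lem 8 embed kro comp alg} applied to the Kronecker $M$-basis. I would then mimic the argument in the proof of Proposition~\ref{prop 6 basis kronecker}: using Proposition~\ref{prop 4 repdirect} together with the pairwise vanishing of $\Hom$ and $\Ext^{1}$ between any two distinct pieces in the list $\prep(Q)$, $\mathcal{T}_{1},\dots,\mathcal{T}_{s}$, image of $F$, $\prei(Q)$ (in the relevant directions), Lemma~\ref{lem 3 rep direct} gives that for any product the coefficient
$$B_{\mbfc}B_{\mbfc'}=\sum_{\mbfc''}a^{\mbfc''}_{\mbfc,\mbfc'}B_{\mbfc''}$$
is evaluated at a representative module $N_{\mbfc''}$ whose component decomposition matches $\mbfc''$, giving $a^{\mbfc''}_{\mbfc,\mbfc'}=\chi(\mathcal{F}(\cdots;N_{\mbfc''}))\in\mathbb{Z}$.

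The main technical obstacle is the coexistence of the non-homogeneous tube factors $E_{\pi_{\mbfc j},j}$ with the $M_{\omega_{\mbfc}\delta}$ factor: both live in $\reg(Q)$, and a careless expansion might introduce rational denominators entering through the $\frac{1}{n}$ in Lemma~\ref{lem 6 relation of P and H}. The resolution is to argue throughout in terms of the $M$-basis and $E_{\pi,j}$-basis, never reverting to the $P_{n\delta}$ monomials: since $\Hom$ and $\Ext^{1}$ vanish between any two distinct tubes (and between any $\mathcal{T}_{j}$ and the image of $F$ on a homogeneous tube of $K$), Lemma~\ref{lem 3 rep direct} factorises every relevant filtration across tubes, so the per-tube integrality results (Proposition~\ref{prop 7 basis cyclic} inside each $\mathcal{T}_{j}$ and Proposition~\ref{prop 6 basis kronecker} transported through $F$ for the homogeneous part) assemble into global integrality. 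Once this is established, $\mathbb{Z}$-linear independence is inherited from the $\mathbb{C}$-linear independence already proved, completing the proof.
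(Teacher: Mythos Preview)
Your overall plan matches the paper's strategy closely: first a $\mathbb{C}$-basis argument via PBW and change-of-basis, then integrality by checking commutation relations through evaluation at representative modules. The $\mathbb{C}$-basis portion and most of the commutation checks (the cases $\mbf1_{P}\mbf1_{P'}$, $\mbf1_{I}\mbf1_{I'}$, $\mbf1_{I}\mbf1_{P}$, $E_{\pi_{1},j}E_{\pi_{2},k}$) go through exactly as you describe.

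There is, however, a genuine gap in your treatment of the interaction between $E_{\pi,j}$ and $M_{\omega\delta}$. Your resolution relies on the assertion that $\Hom$ and $\Ext^{1}$ vanish ``between any $\mathcal{T}_{j}$ and the image of $F$ on a homogeneous tube of $K$'', allowing Lemma~\ref{lem 3 rep direct} to factorise filtrations across tubes. But this is false: by Lemma~\ref{lem 8 embed kronecker}(3), the functor $F$ sends the homogeneous tube of $K$ sitting over the point $z_{j}\in J$ \emph{into} the non-homogeneous tube $\mathcal{T}_{j}$, via $F(S^{K}_{j,l})=S^{Q}_{j,k_{j},lr_{j}}$. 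Consequently the support of $M_{\omega\delta}=F(M_{\omega\delta_{K}})$ contains modules lying in $\mathcal{T}_{j}$, and these modules have nontrivial $\Hom$ and $\Ext^{1}$ with other objects of $\mathcal{T}_{j}$. So the supports of the $B_{\mbfc}$ are \emph{not} pairwise disjoint in the way your evaluation argument requires: a module of the form $M(\pi')_{j}\oplus R$ with $R\in\mathcal{S}_{\lambda'}$ can lie in the support of $E_{\pi'',j}M_{\lambda''\delta}$ for several pairs $(\pi'',\lambda'')$ with $|\lambda''|>|\lambda'|$, because part of the $M_{\lambda''\delta}$-support may be absorbed into the $\mathcal{T}_{j}$-component. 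The single-point evaluation $a_{\mbfc,\mbfc'}^{\mbfc''}=\chi(\mathcal{F}(\cdots;N_{\mbfc''}))$ therefore does not isolate one coefficient.

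The paper handles this (Lemma~\ref{lem 8 comm relation pi omega} and, by the same method, Lemmas~\ref{lem 8 comm relation omega P}--\ref{lem 8 comm relation P I}) by an \emph{inverse induction on $|\lambda|$}: one first bounds the homogeneous part from above, evaluates at a module with maximal $|\lambda|$ where the overlap does not occur, obtains integrality of those coefficients, and then descends, at each step subtracting the already-known integral contributions from larger $|\lambda''|$ before reading off the next coefficient. Your proposal needs this inductive mechanism (or an equivalent triangularity argument) to close the gap.
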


Note that for any $\mbfc\in\mathcal{J}$, the modules in the support
of $B_{\mbfc}$ have the same dimension vectors. So we define
$$\dimv B_{\mbfc}=\dimv P_{\mbfc}+\sum_{j=1}^{s}\dimv M(\pi_{\mbfc j})_{j}+|\omega_{\mbfc}|\delta+\dimv I_{\mbfc}.$$

Once this theorem is proved, we have the following corollary. The
proof is similar to Corollary \ref{cor 6 other basis }

\begin{cor}\label{cor 8 other basis}
The following two sets
$$\{B'_{\mbfc}=\mbf1_{P_{\mbfc}}E_{\pi_{\mbfc 1},1}E_{\pi_{\mbfc 2},2}\cdots E_{\pi_{\mbfc
s},s}H_{\omega_{\mbfc}\delta}\mbf1_{I_{\mbfc}}|\mbfc\in\mathcal{J}\}$$
$$\{B''_{\mbfc}=\mbf1_{P_{\mbfc}}E_{\pi_{\mbfc 1},1}E_{\pi_{\mbfc 2},2}\cdots E_{\pi_{\mbfc
s},s}E_{\omega_{\mbfc}\delta}\mbf1_{I_{\mbfc}}|\mbfc\in\mathcal{J}\}$$
are also $\mathbb{Z}$-basis of $\mathcal{C}_{\mathbb{Z}}(Q)$.
\end{cor}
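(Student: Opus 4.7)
The plan is to deduce this corollary directly from Theorem \ref{thm 8 main} by exhibiting the change of basis from $\{B_{\mbfc}\}$ to $\{B'_{\mbfc}\}$ (respectively $\{B''_{\mbfc}\}$) as a $\mathbb{Z}$-unimodular transformation. Since $B_{\mbfc}$, $B'_{\mbfc}$ and $B''_{\mbfc}$ share the same outer factors $\mbf1_{P_{\mbfc}}$, $E_{\pi_{\mbfc 1},1}\cdots E_{\pi_{\mbfc s},s}$ and $\mbf1_{I_{\mbfc}}$ and differ only in the middle regular factor, the sole thing to verify is that replacing $M_{\omega\delta}$ by $H_{\omega\delta}$ or $E_{\omega\delta}$ (with $\omega\vdash n$) amounts to applying a $\mathbb{Z}$-unimodular change of basis indexed by partitions of $n$.

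The input for this is the Kronecker analysis of Section \ref{sec 6}. By Lemmas \ref{lem 6 kronecker basis H}, \ref{lem 6 kronecker basis E} and \ref{lem 6 kronecker basis M}, each of the three families $\{M_{\omega\delta_{K}}\}_{\omega\vdash n}$, $\{H_{\omega\delta_{K}}\}_{\omega\vdash n}$ and $\{E_{\omega\delta_{K}}\}_{\omega\vdash n}$ is a $\mathbb{Z}$-basis of the finite-rank free $\mathbb{Z}$-module $\mathcal{C}_{\mathbb{Z}}(K)^{reg}_{n}$; in particular the transition matrices among them have integer entries and determinant $\pm 1$, the transition from $E$ to $M$ being explicitly unitriangular in the dominance order by Lemma \ref{lem 6 relation of E and M}, and the transition between $H$ and $E$ being controlled by the identity $H(t)E(-t)=1$ established in the proof of Lemma \ref{lem 6 kronecker basis E}. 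Pushing these identities forward through the embedding $F$ of Lemma \ref{lem 8 embed kro comp alg}, which is $\mathbb{Z}$-linear, I obtain the same unimodular relations among $\{M_{\omega\delta}\}$, $\{H_{\omega\delta}\}$ and $\{E_{\omega\delta}\}$ inside $\mathcal{C}_{\mathbb{Z}}(Q)$.

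To conclude, I fix the outer data $P_{\mbfc}, I_{\mbfc}, \pi_{\mbfc}$ and multiply each Kronecker transition identity on the left by $\mbf1_{P_{\mbfc}}E_{\pi_{\mbfc 1},1}\cdots E_{\pi_{\mbfc s},s}$ and on the right by $\mbf1_{I_{\mbfc}}$. This expresses each $B'_{\mbfc}$ (respectively $B''_{\mbfc}$) as a $\mathbb{Z}$-linear combination of the elements $B_{\mbfc'}$ sharing the same outer data and satisfying $|\omega_{\mbfc'}|=|\omega_{\mbfc}|$, and symmetrically. The full transition matrix from $\{B_{\mbfc}\}$ is therefore block-diagonal, with blocks indexed by the choice of outer data together with $n=|\omega_{\mbfc}|$, each block being a unimodular integer matrix; hence it is a $\mathbb{Z}$-automorphism of $\mathcal{C}_{\mathbb{Z}}(Q)$, and the new sets are $\mathbb{Z}$-bases. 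There is essentially no obstacle beyond the main theorem itself: the only point requiring care is that the substitution genuinely takes place inside the image $F(\mathcal{C}_{\mathbb{Z}}(K)^{reg})$ without producing cross-terms involving the preprojective, preinjective, or non-homogeneous-tube factors, which is immediate from the fact that the middle factor in $B_{\mbfc}$ is a single element of that subalgebra sitting in a fixed position.
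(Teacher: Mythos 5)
Your argument is correct and is essentially the paper's own proof: the paper reduces Corollary \ref{cor 8 other basis} to the observation (as in Corollary \ref{cor 6 other basis }) that $B_{\mbfc}$, $B'_{\mbfc}$, $B''_{\mbfc}$ differ only in the middle regular factor, and that $\{M_{\omega\delta}\}$, $\{H_{\omega\delta}\}$, $\{E_{\omega\delta}\}$ are mutually $\mathbb{Z}$-expressible via Lemmas \ref{lem 6 kronecker basis H}, \ref{lem 6 kronecker basis E} and \ref{lem 6 kronecker basis M}, pushed forward through the embedding $F$. You spell out the unimodular, block-diagonal structure of the transition matrix more explicitly than the paper does, but the underlying idea is the same.
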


Define $\mathcal{C}(Q)^{reg}$ (resp.
$\mathcal{C}_{\mathbb{Z}}(Q)^{reg}$) to be the
$\mathbb{C}$-subalgebra (resp. $\mathbb{Z}$-subalgebra) generated by
$\{E_{\pi,j}|\pi\in\Pi_{j}^{a},1\leq j\leq s\}$ and
$\{M_{\omega\delta}|\omega\vdash n,n\in\mathbb{N}\}$. As in
Corollary \ref{cor 6 kronecker tri decomp}, we have a triangular
decomposition of the integral composition algebra
$\mathcal{C}_{\mathbb{Z}}(Q)$:

\begin{cor}\label{cor 8 tri decomposition}
$$\mathcal{C}_{\mathbb{Z}}(Q)\simeq\mathcal{C}_{\mathbb{Z}}(Q)^{\prep}\otimes\mathcal{C}_{\mathbb{Z}}(Q)^{\reg}\otimes\mathcal{C}_{\mathbb{Z}}(Q)^{\prei}.$$
\end{cor}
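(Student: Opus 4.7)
The plan is to derive the triangular decomposition as an immediate consequence of Theorem \ref{thm 8 main} paired with the basis statements for the three component subalgebras, by showing that the multiplication map
$$\mu: \mathcal{C}_{\mathbb{Z}}(Q)^{prep} \otimes_{\mathbb{Z}} \mathcal{C}_{\mathbb{Z}}(Q)^{reg} \otimes_{\mathbb{Z}} \mathcal{C}_{\mathbb{Z}}(Q)^{prei} \longrightarrow \mathcal{C}_{\mathbb{Z}}(Q),\qquad x \otimes y \otimes z \mapsto xyz,$$
is a $\mathbb{Z}$-module isomorphism. The strategy is to exhibit explicit $\mathbb{Z}$-bases on each factor of the tensor product and then verify that $\mu$ sends the induced tensor basis bijectively onto the basis $\{B_{\mbfc}\}_{\mbfc \in \mathcal{J}}$ furnished by Theorem \ref{thm 8 main}.

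First, recall from Propositions \ref{prop 5 bases of preproj} and \ref{prop 5 bases of preinj} that $\{\mbf1_P\}_{P \in \prep(Q)}$ and $\{\mbf1_I\}_{I \in \prei(Q)}$ are $\mathbb{Z}$-bases of $\mathcal{C}_{\mathbb{Z}}(Q)^{prep}$ and $\mathcal{C}_{\mathbb{Z}}(Q)^{prei}$ respectively. Next, I would establish that
$$\{E_{\pi_1,1}\, E_{\pi_2,2} \cdots E_{\pi_s,s}\, M_{\omega\delta} \;:\; \pi_j \in \Pi_j^a,\ \omega \vdash n,\ n \in \mathbb{N}\}$$
is a $\mathbb{Z}$-basis of $\mathcal{C}_{\mathbb{Z}}(Q)^{reg}$. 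Because modules living in distinct non-homogeneous tubes $\mathcal{T}_{j_1},\mathcal{T}_{j_2}$, as well as modules in the Kronecker-embedded homogeneous regular part, pairwise satisfy $\Hom = \Ext^{1} = 0$ across tubes, Lemma \ref{lem 3 rep direct} shows that each such monomial equals $\mbf1$ of the direct sum of the supports of its factors; distinct tuples $(\pi_1,\ldots,\pi_s,\omega)$ therefore produce basis vectors supported on pairwise disjoint $G_{\alpha}$-orbits, yielding $\mathbb{Z}$-linear independence, while spanning follows from the definition of $\mathcal{C}_{\mathbb{Z}}(Q)^{reg}$ in \ref{8 integral basis}.

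Finally, applying $\mu$ to the tensor product of these three bases produces
$$\bigl\{\mbf1_{P_{\mbfc}}\cdot (E_{\pi_{\mbfc 1},1}\cdots E_{\pi_{\mbfc s},s}\,M_{\omega_{\mbfc}\delta})\cdot \mbf1_{I_{\mbfc}}\bigr\}_{\mbfc \in \mathcal{J}} \;=\; \{B_{\mbfc}\}_{\mbfc \in \mathcal{J}},$$
which is a $\mathbb{Z}$-basis of $\mathcal{C}_{\mathbb{Z}}(Q)$ by Theorem \ref{thm 8 main}, and the indexing by $\mathcal{J}$ guarantees that $\mu$ is injective on basis elements. Hence $\mu$ is a bijection of $\mathbb{Z}$-bases and thus an isomorphism of $\mathbb{Z}$-modules.

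The main obstacle is the middle step: verifying that the regular subalgebra really does factor as claimed, since one must rule out hidden relations between the basis candidates $E_{\pi_j,j}$ and $M_{\omega\delta}$. The orthogonality of tubes (the Hom/Ext vanishing between distinct $\mathcal{T}_j$ and the homogeneous part) is the essential geometric input; once this is in hand, both the commutativity needed to normalize the monomials and the independence via disjoint supports follow cleanly, and the triangular decomposition drops out.
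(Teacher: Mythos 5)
Your proposal is correct and follows essentially the same route as the paper, which obtains the corollary directly from Theorem \ref{thm 8 main} by matching the product basis $\{B_{\mbfc}\}$ with the tensor product of the bases of the three factor subalgebras. One small caution: since the support of $M_{\omega\delta}$ meets the non-homogeneous tubes, the supports of distinct monomials $E_{\pi_1,1}\cdots E_{\pi_s,s}M_{\omega\delta}$ need not be literally pairwise disjoint, so the $\mathbb{Z}$-linear independence of the middle factor is obtained most safely by specializing $P_{\mbfc}=0$, $I_{\mbfc}=0$ in Theorem \ref{thm 8 main} (or by evaluating only at representatives whose regular part lies in the homogeneous tubes), rather than by the blanket disjoint-support argument.
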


The rest of the paper is devoted to the proof of Theorem \ref{thm 8
main}.

\subsection{A $\mathbb{C}$-basis of $\mathcal{C}(Q)$}\label{8 basis over
C} In this subsection we prove the set
$\{B_{\mbfc}|\mbfc\in\mathcal{J}\}$ is a $\mathbb{C}$-basis of
$\mathcal{C}(Q)$, which is the first step to prove Theorem \ref{thm
8 main}. We need the following lemma:

\begin{lem}\label{lem 8 relation root vec and E pi}
(1). Fixed $1\leq j\leq s$, for any $1\leq i\leq r_{j}$ and
$r_{j}\nmid l$ we have
$$\mbf1_{S_{j,i,l}}=E_{\pi,j}$$
where $\pi\in\Pi_{j}^{a}$ such that $M(\pi)_{j}=S_{j,i,l}$.

(2). Fixed $1\leq j\leq s$, for any $1\leq i\leq r_{j}-1$, $n\geq 1$
we have
$$\mbf1_{S_{j,i,nr_{j}}}-\mbf1_{S_{j,i+1,nr_{j}}}=E_{\pi_{1},j}-E_{\pi_{2},j}$$
where $\pi_{1},\pi_{2}\in\Pi_{j}^{a}$ such that
$M(\pi_{1})_{j}=S_{j,i,nr_{j}}$, $M(\pi_{2})_{j}=S_{j,i+1,nr_{j}}$.
\end{lem}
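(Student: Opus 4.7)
The plan is to deduce both identities by transporting Lemma \ref{lem 7 relat of E pi and fmv} through the algebra isomorphism $F_{j} \colon \mathcal{H}(C_{r_{j}}) \xrightarrow{\sim} \mathcal{H}(\mathcal{T}_{j})$ that comes from the equivalence of abelian categories in Lemma \ref{lem 4 regular component}. In other words, the present lemma is essentially the tube-wise image of the cyclic quiver case already established in Section \ref{sec 7}, and the work consists in verifying that everything lines up under $F_{j}$.

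Before applying $F_{j}$, I would check the compatibility of all the relevant data. On indecomposables: by definition $F_{j}(S_{i}[l]) = S_{j,i,l}$, hence $F_{j}(M(\pi)) = M(\pi)_{j}$ and $F_{j}(\mbf1_{M(\pi)}) = \mbf1_{M(\pi)_{j}}$ for every $\pi \in \Pi_{j}^{a}$. On the distinguished monomials: because each $E_{\pi}$ is built inductively from the $\mathfrak{m}^{(\omega_{\pi})} = \mbf1_{j_{1}}^{(e_{1})} \cdots \mbf1_{j_{t}}^{(e_{t})}$ in the Chevalley generators, and $F_{j}$ is an algebra homomorphism sending $\mbf1_{i} \in \mathcal{C}(C_{r_{j}})$ to $\mbf1_{S_{j,i,1}}$, we have $F_{j}(E_{\pi}) = E_{\pi,j}$ exactly as introduced in \ref{8 basis non-hom tube}.

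Given this dictionary, part~(1) follows by applying $F_{j}$ to the identity $\mbf1_{S_{i}[l]} = E_{\pi}$ of Lemma \ref{lem 7 relat of E pi and fmv}(1), valid for $r_{j} \nmid l$ and $M(\pi) = S_{i}[l]$; the image is precisely $\mbf1_{S_{j,i,l}} = E_{\pi,j}$. Likewise part~(2) is obtained by pushing the identity $\mbf1_{S_{i}[nr_{j}]} - \mbf1_{S_{i+1}[nr_{j}]} = E_{\pi_{1}} - E_{\pi_{2}}$ through $F_{j}$, which gives the desired equation in $\mathcal{H}(\mathcal{T}_{j}) \subset \mathcal{H}(Q)$.

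There is no real obstacle here; the only point requiring attention is the functorial matching described in the second paragraph, and it is immediate from the setup of Section \ref{sec 8} together with Lemma \ref{lem 4 regular component}. Should one prefer a self-contained proof avoiding explicit reference to $F_{j}$, I would imitate the argument of Lemma \ref{lem 7 relat of E pi and fmv} directly inside $\mathcal{H}(\mathcal{T}_{j})$: push Lemma \ref{lem 7 E pi expression} through $F_{j}$ to write $E_{\pi,j} = \mbf1_{M(\pi)_{j}} + (\text{terms in } F_{j}(\mathcal{P}(C_{r_{j}})))$, invoke Proposition \ref{prop 8 fmv tame} together with the lemma of \ref{8 basis non-hom tube} to place the real and imaginary root vectors appearing on the left-hand sides in $\mathcal{C}(\mathcal{T}_{j}) \subset \mathcal{C}(Q)$, and then conclude using the direct sum decomposition $\mathcal{H}(C_{r_{j}}) = \mathcal{C}(C_{r_{j}}) \oplus \mathcal{P}(C_{r_{j}})$ of Lemma \ref{lem 7 decomp of Hall alg} transported by $F_{j}$.
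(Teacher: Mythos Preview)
Your proposal is correct and matches the paper's own proof, which simply says the lemma follows immediately from Lemma~\ref{lem 7 relat of E pi and fmv}. You have merely spelled out the transport along $F_{j}$ that the paper leaves implicit.
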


\begin{proof}
It follows immediately from Lemma \ref{lem 7 relat of E pi and fmv}.
\end{proof}

By the PBW-theorem, the monomials in a fixed order on the basis
elements of $\mathfrak{n}^{+}(Q)$ given in Proposition \ref{prop 8
fmv tame} form a $\mathbb{C}$-basis of $\mathcal{C}(Q)$.

Note that $\mathcal{C}(Q)$ is $\mathbb{N}[I]$-graded:
$\mathcal{C}(Q)=\oplus_{\alpha\in\mathbb{N}[I]}\mathcal{C}(Q)_{\alpha}$,
where $\mathcal{C}(Q)_{\alpha}$ is the subspace spanned by
constructible functions in $\mathcal{C}(Q)$ whose supports are in
$\mathbf{E}_{\alpha}$. The PBW-basis elements are of course
homogeneous. By construction $B_{\mbfc}$ is also homogeneous for any
$\mbfc\in\mathcal{J}$.

Now by the results in \ref{5 bases of preinj and preproj}, \ref{6
basis kronecker} and lemma \ref{lem 8 relation root vec and E pi},
we can see that for any $\alpha\in\mathbb{N}[I]$, the basis of
$\mathcal{C}(Q)_{\alpha}$ can be expressed by
$\{B_{\mbfc}|\mbfc\in\mathcal{J}\}$. Moreover, by definition the
elements in $\{B_{\mbfc}|\mbfc\in\mathcal{J}\}$ is
$\mathbb{C}$-linear independent. Hence
$\{B_{\mbfc}|\mbfc\in\mathcal{J}\}$ is a $\mathbb{C}$-basis of
$\mathcal{C}(Q)$.

\subsection{Commutation relations}\label{8 comm relation}

We have known that the elements in the set
$\{B_{\mbfc}|\mbfc\in\mathcal{J}\}$ are all in
$\mathcal{C}_{\mathbb{Z}}(Q)$. And the divided powers of $\mbf1_{S}$
for any simple module $S$ are in the set
$\{B_{\mbfc}|\mbfc\in\mathcal{J}\}$. Thus the
$\mathbb{Z}$-subalgebra generated by
$\{B_{\mbfc}|\mbfc\in\mathcal{J}\}$ is equal to
$\mathcal{C}_{\mathbb{Z}}(Q)$.

Therefore, to prove Theorem \ref{thm 8 main} we have to check the
product of any two elements in $\{B_{\mbfc}|\mbfc\in\mathcal{J}\}$
is still a $\mathbb{Z}$-combination of elements in
$\{B_{\mbfc}|\mbfc\in\mathcal{J}\}$. So the procedure is similar to
the proof of Proposition \ref{prop 6 basis kronecker}. However, it
is more complicated here since we have basis elements $E_{\pi,j}$
arising from non-homogeneous tubes, moreover, the support of
$M_{\omega\delta}$ contains modules not only in the homogeneous
tubes but also non-homogeneous tubes.

For the case $\mbf1_{P}\mbf1_{P'}$ and $\mbf1_{I}\mbf1_{I'}$ with
$P,P'\in\prep(Q)$; $I,I'\in\prei(Q)$, we have done in \ref{5 bases
of preinj and preproj}. And we have the case
$M_{\lambda\delta}M_{\omega\delta}$ for any $\lambda\vdash
n),\omega\vdash m$ done in \ref{6 kronecker regular}.

Consider $E_{\pi_{1},j}E_{\pi_{2},k}$ for any $1\leq j,k\leq s$,
$\pi_{1}\in\Pi_{j}^{a},\pi_{2}\in\Pi_{k}^{a}$. Since there are no
non-trivial extensions between different tubes,
$E_{\pi_{1},j}E_{\pi_{2},k}=E_{\pi_{2},k}E_{\pi_{1},j}$ for $j\neq
k$. When $j=k$, we know that $E_{\pi_{1},j}E_{\pi_{2},j}$ must be a
$\mathbb{Z}$-combination of $\{E_{\pi,j}|j\in\Pi_{j}^{a}\}$, see
\ref{7 basis cyclic quiver}.

\begin{lem}\label{lem 8 comm relation pi P}
For any  fixed $1\leq j\leq s$, $\pi\in\Pi_{j}^{a}$ and
$P\in\prep(Q)$,
$$E_{\pi,j}\mbf1_{P}=\sum_{P'\in\prep(Q),\pi'\in\Pi_{j}^{a}} a_{P',\pi',j}\mbf1_{P'}E_{\pi',j}$$
where $\dimv M(\pi')_{j}+\dimv P'=\dimv P+\dimv M(\pi)_{j}$ and
$a_{P',\pi',j}\in\mathbb{Z}$.
\end{lem}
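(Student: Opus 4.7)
The plan is to first use a support analysis, combined with the $\mathbb{C}$-basis of $\mathcal{C}(Q)$ established in \ref{8 basis over C}, to show that only $\mbfc$ of the claimed restricted form ($P_{\mbfc}\in\prep(Q)$, $\pi_{\mbfc j}\in\Pi_j^a$, $I_{\mbfc}=0$, $\omega_{\mbfc}$ empty, and $\pi_{\mbfc k}$ empty for $k\neq j$) can appear in the expansion $E_{\pi,j}\mbf1_P=\sum_{\mbfc}a_{\mbfc}B_{\mbfc}$. Then I would evaluate at carefully chosen test modules to extract each coefficient $a_{P',\pi',j}$ as an integer Euler characteristic. The dimension vector equality is automatic from the $\mathbb{N}[I]$-grading.

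For the support analysis, I would show that any $M$ with $(E_{\pi,j}\mbf1_P)(M)\neq 0$ admits a decomposition $M\cong P'\oplus R'$ with $P'\in\prep(Q)$ and $R'\in\mathcal{T}_j$. Such $M$ fits in a short exact sequence $0\to P\to M\to R\to 0$ with $R\in\mathcal{T}_j$. Writing $M=P'_0\oplus R'_0\oplus I'_0$ in its canonical decomposition, the vanishings $\Hom(I'_0,R)=\Hom(I'_0,P)=0$ from Proposition \ref{prop 4 repdirect} force $I'_0=0$; splitting $R'_0$ by tubes and using the orthogonality of distinct tubes in $\reg(Q)$ from Lemma \ref{lem 4 regular component} together with $\Hom(R,P)=0$, every regular summand outside $\mathcal{T}_j$ must map to zero in $M/L=R$ and hence embed into $P$, so it vanishes. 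On the other hand, by the Hom/Ext vanishings between $\prep(Q)$, $\reg(Q)$, $\prei(Q)$ and Lemma \ref{lem 3 rep direct}, the support of each $B_{\mbfc}$ consists of modules whose canonical decomposition contains $P_{\mbfc}$ as the preprojective summand, $I_{\mbfc}$ as the preinjective summand, and a regular summand compatible with $\pi_{\mbfc 1},\ldots,\pi_{\mbfc s}$ and $\omega_{\mbfc}$. Matching this against the restricted support of $E_{\pi,j}\mbf1_P$ eliminates all $\mbfc$ outside the claimed set.

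For integrality, I would evaluate the identity $E_{\pi,j}\mbf1_P=\sum_{P',\pi'}a_{P',\pi',j}\mbf1_{P'}E_{\pi',j}$ at the test module $N=P'_0\oplus M(\pi'_0)_j$ for arbitrary $P'_0\in\prep(Q)$ and $\pi'_0\in\Pi_j^a$. By Lemma \ref{lem 3 rep direct} and the Hom/Ext vanishings, $\mbf1_{P'}\mbf1_{M(\lambda)_j}=\mbf1_{P'\oplus M(\lambda)_j}$ for any $\lambda$. Combined with the triangular expansion $E_{\pi',j}=\mbf1_{M(\pi')_j}+\sum_{\lambda<\pi',\,\lambda\notin\Pi_j^a}g^{\pi'}_{\lambda}\mbf1_{M(\lambda)_j}$ from Lemma \ref{lem 7 E pi expression}, and the aperiodicity of $\pi'_0$ (which ensures $\pi'_0$ does not appear among the non-aperiodic lower terms), I would obtain $\mbf1_{P'}E_{\pi',j}(N)=\delta_{P',P'_0}\delta_{\pi',\pi'_0}$. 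Thus $a_{P'_0,\pi'_0,j}=(E_{\pi,j}\mbf1_P)(N)$, and the right-hand side is an integer because $E_{\pi,j}$ is a $\mathbb{Z}$-linear combination of characteristic functions of orbits and $\mbf1_P$ is such a function, so the value is a $\mathbb{Z}$-linear combination of Euler characteristics of the form $\chi(\mathcal{F}(\mathcal{O}_{M(\lambda)_j},\mathcal{O}_P;N))$.

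The main obstacle is the support analysis, specifically the step that forces every regular summand of $M$ to lie in $\mathcal{T}_j$; this depends crucially on the tube-orthogonality in $\reg(Q)$ recorded in Lemma \ref{lem 4 regular component}. Once this geometric constraint is in place, both the reduction to the restricted index set and the triangular extraction of each coefficient as an integer Euler characteristic proceed in a standard way.
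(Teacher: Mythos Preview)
Your proposal is correct and follows essentially the same route as the paper: a support analysis showing that every module in the support of $E_{\pi,j}\mbf1_P$ decomposes as a preprojective module direct sum a module in $\mathcal{T}_j$, then evaluation at the test module $P'\oplus M(\pi')_j$ combined with the triangular expansion of $E_{\pi',j}$ from Lemma~\ref{lem 7 E pi expression} to isolate each coefficient as a $\mathbb{Z}$-linear combination of Euler characteristics. Your write-up is in fact more careful than the paper's in two places: you spell out the support argument explicitly (using Proposition~\ref{prop 4 repdirect} and the tube orthogonality of Lemma~\ref{lem 4 regular component}), and you invoke the $\mathbb{C}$-basis of \S\ref{8 basis over C} to justify that the expansion has the claimed shape, whereas the paper simply asserts this.
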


\begin{proof}
For any module $M$ in the support of $E_{\pi,j}\mbf1_{P}$, the
direct summand of $M$ only contains preprojective modules and
regular modules in $\mathcal{T}_{z_{j}}$. So
$$E_{\pi,j}\mbf1_{P}=\sum_{P'\in\prep(Q),\pi'\in\Pi_{j}^{a}} a_{P',\pi',j}\mbf1_{P'}E_{\pi',j}$$
for some $a_{P',\pi',j}\in\mathbb{C}$. And by a comparison of the
dimension vectors in both sides, we have $\dimv M(\pi')_{j}+\dimv
P'=\dimv P+\dimv M(\pi)_{j}$.

By Lemma \ref{lem 7 E pi expression}, we have
$$E_{\pi',j}=\mbf1_{M(\pi')_{j}}+\sum_{\lambda\in\Pi_{j}\setminus\Pi_{j}^{a},\lambda<\pi'}g_{\lambda,j}^{\pi'}\mbf1_{M(\lambda)_{j}}$$
with $g_{\lambda,j}^{\pi'}\in\mathbb{Z}$.

For any fixed $P'$ and $\pi'$, let $M_{P',\pi',j}=P'\oplus
M(\pi')_{j}$. We can see that $M_{P',\pi',j}$ is not contained in
the support of any other $\mbf1_{P''}E_{\pi'',j}$. Thus
$$E_{\pi,j}\mbf1_{P}(M_{P',\pi',j})=a_{P',\pi',j}\mbf1_{P'}E_{\pi',j}(M_{P',\pi',j})=a_{P',\pi',j},$$

Again by Lemma \ref{lem 7 E pi expression},
$$E_{\pi,j}=\mbf1_{M(\pi)_{j}}+\sum_{\lambda\in\Pi_{j}\setminus\Pi_{j}^{a},\lambda<\pi}g_{\lambda,j}^{\pi}\mbf1_{M(\lambda)_{j}}$$
with $g_{\lambda,j}^{\pi}\in\mathbb{Z}$.

This yields
$$a_{P',\pi',j}=\chi(\mathcal{F}(M(\pi)_{j},P;M_{P',\pi',j}))+\sum_{\lambda\in\Pi_{j}\setminus\Pi_{j}^{a},\lambda<\pi}g_{\lambda,j}^{\pi}\chi(\mathcal{F}(M(\lambda)_{j},P;M_{P',\pi',j})).$$

Hence $a_{P',\pi',j}\in\mathbb{Z}$.
\end{proof}

Similarly we can prove

\begin{lem}\label{lem 8 comm relation pi I}
For any  fixed $1\leq j\leq s$, $\pi\in\Pi_{j}^{a}$ and
$I\in\prei(Q)$,
$$\mbf1_{I}E_{\pi,j}=\sum_{\pi',I'} b_{\pi',I',j}E_{\pi',j}\mbf1_{I'}$$
where $\dimv M(\pi')_{j}+\dimv I'=\dimv I+\dimv M(\pi)_{j}$ and
$b_{\pi',I',j}\in\mathbb{Z}$.
\end{lem}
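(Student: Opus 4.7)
The strategy is to mirror the proof of Lemma~\ref{lem 8 comm relation pi P} with preprojective and preinjective modules interchanged, using the symmetric half of Proposition~\ref{prop 4 repdirect}. First I would describe the support of $\mbf1_I E_{\pi,j}$. By Lemma~\ref{lem 7 E pi expression}, any module $N$ in this support fits into a short exact sequence $0 \to M(\lambda)_j \to N \to I \to 0$ for some $\lambda \in \Pi_j$ with $\lambda \leq \pi$. Decomposing $N = P' \oplus R' \oplus I'$ along the three AR-components, the composition $M(\lambda)_j \hookrightarrow N \twoheadrightarrow P'$ vanishes since $\Hom(R,P) = 0$, so it factors through $N/M(\lambda)_j \simeq I$; but $\Hom(I,P) = 0$ as well, forcing $P' = 0$. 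A similar argument using $\Hom(I,R) = 0$ shows that the image of $M(\lambda)_j$ in $R'$ is all of $R'$, so $R'$ is a quotient of $M(\lambda)_j$ and hence lies in the extension-closed abelian subcategory $\mathcal{T}_j$. Combining this support description with the $\mathbb{C}$-basis property established in~\ref{8 basis over C}, one may write
$$\mbf1_I E_{\pi,j} = \sum_{\pi',I'} b_{\pi',I',j}\, E_{\pi',j}\, \mbf1_{I'}$$
with $b_{\pi',I',j} \in \mathbb{C}$ and $\dimv M(\pi')_j + \dimv I' = \dimv I + \dimv M(\pi)_j$.

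To pin down the coefficients as integers, I would evaluate both sides at the distinguished module $L_{\pi',I'} = M(\pi')_j \oplus I'$. Since $\Hom(I', M(\pi')_j) = 0$ and $\Ext^1(M(\pi')_j, I') = 0$, Lemma~\ref{lem 3 rep direct} gives $\mbf1_{M(\pi')_j}\mbf1_{I'} = \mbf1_{L_{\pi',I'}}$. Projecting any submodule $L'' \subseteq L_{\pi',I'}$ with $L'' \simeq I'$ onto the regular summand $M(\pi')_j$ and using $\Hom(I,R) = 0$ forces $L''$ to equal the preinjective summand $I'$, so the quotient is necessarily isomorphic to $M(\pi')_j$. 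Because $\pi' \in \Pi_j^a$, Lemma~\ref{lem 7 E pi expression} ensures that $\mbf1_{M(\pi')_j}$ appears in $E_{\pi'',j}$ only for $\pi'' = \pi'$ (with coefficient $1$), so every summand on the right-hand side other than the one indexed by $(\pi', I')$ vanishes at $L_{\pi',I'}$. This identifies $b_{\pi',I',j} = (\mbf1_I E_{\pi,j})(L_{\pi',I'})$, which after expanding $E_{\pi,j}$ once more becomes a $\mathbb{Z}$-linear combination of Euler characteristics $\chi(\mathcal{F}(I, M(\lambda)_j; L_{\pi',I'}))$ and is therefore an integer.

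The main subtlety is the uniqueness step at $L_{\pi',I'}$. In the parallel argument for $E_{\pi,j}\mbf1_P$, the vanishing $\Ext^1(R,P) = 0$ automatically splits every extension of a regular module by a preprojective module, so the filtration is forced on structural grounds. Here $\Ext^1(I,R)$ may be genuinely nonzero, so one cannot appeal to a splitting argument; instead the crucial input is the homomorphism vanishing $\Hom(I,R) = 0$, which is what pins down the preinjective summand as the unique submodule of $L_{\pi',I'}$ isomorphic to $I'$ and thereby makes the coefficient extraction well-defined.
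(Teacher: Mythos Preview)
Your proposal is correct and follows exactly the approach the paper intends: it simply dualizes the proof of Lemma~\ref{lem 8 comm relation pi P} (the paper writes only ``Similarly we can prove''), evaluating at the test module $M(\pi')_j \oplus I'$ and using Lemma~\ref{lem 7 E pi expression} together with the $\Hom$-vanishing from Proposition~\ref{prop 4 repdirect} to isolate each coefficient as an integer combination of Euler characteristics. Your final paragraph correctly identifies the only asymmetry with the preprojective case, and your support analysis is in fact more detailed than what the paper spells out for Lemma~\ref{lem 8 comm relation pi P}.
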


For
$\lambda=(\lambda_{1}\geq\lambda_{2}\geq\cdots\geq\lambda_{t})\vdash
n$, denote by $\mathcal{S}_{\lambda}$ the set of all regular modules
$M\simeq \oplus_{i=1}^{t}R_{i}\in\rep(Q)$ such that $R_{i}$
indecomposable homogeneous and $\dimv R_{i}=\lambda_{i}\delta$ (note
that this definition coincides with the one in \ref{6 kronecker
regular} when $Q=K$).

\begin{lem}\label{lem 8 comm relation pi omega}
For any  fixed $1\leq j\leq s$, $\pi\in\Pi_{j}^{a}$ and
$\omega\vdash n,n\in\mathbb{N}$,
$$M_{\omega\delta}E_{\pi,j}=\sum_{\pi'\in\Pi_{j}^{a},\lambda\vdash k}c_{\pi',\lambda,j} E_{\pi',j}M_{\lambda\delta}$$
where $\dimv M(\pi')_{j}+k\delta=n\delta+\dimv M(\pi)_{j}$ and
$c_{\pi',\lambda,j}\in\mathbb{Z}$.
\end{lem}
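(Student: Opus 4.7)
The proof follows the pattern of Lemmas \ref{lem 8 comm relation pi P} and \ref{lem 8 comm relation pi I}. The argument splits into two parts: first, restrict the shape of the expansion of $M_{\omega\delta}E_{\pi,j}$ against the $\mathbb{C}$-basis $\{B_{\mbfc}\}$; second, show the remaining coefficients are Euler characteristics, hence integral.

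\emph{Step 1: Restricting the shape.} Since $\{B_{\mbfc}\,|\,\mbfc\in\mathcal{J}\}$ is a $\mathbb{C}$-basis of $\mathcal{C}(Q)$ (proved in \ref{8 basis over C}), there is a unique expansion
$$M_{\omega\delta}E_{\pi,j}=\sum_{\mbfc} c_{\mbfc}B_{\mbfc},\qquad c_{\mbfc}\in\mathbb{C}.$$
Any module $L$ in the support of $M_{\omega\delta}E_{\pi,j}$ is an extension of a module in $\mathrm{supp}(E_{\pi,j})\subset\mathcal{T}_{j}$ by a module in $\mathrm{supp}(M_{\omega\delta})$, whose indecomposable summands are of the form $S_{x,m}^{Q}$ ($x\in\mathbb{P}^{1}\setminus J$) or $S_{k,k_{k},mr_{k}}^{Q}$ ($1\le k\le s$), by Lemma \ref{lem 8 embed kronecker}. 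Because $\Hom$ and $\Ext^{1}$ vanish between modules in distinct tubes, $L$ splits off every summand lying outside $\mathcal{T}_{j}$ as a direct summand, and its $\mathcal{T}_{j}$-part is again an extension inside $\mathcal{T}_{j}$. In particular, $L$ has no preprojective or preinjective summand and no $\mathcal{T}_{k}$-summand for $k\ne j$ beyond those coming directly from $M_{\omega\delta}$. Consequently only basis elements with $P_{\mbfc}=0$, $I_{\mbfc}=0$, and $\pi_{\mbfc,k}=0$ for $k\ne j$ survive, yielding the form $\sum c_{\pi',\lambda,j}E_{\pi',j}M_{\lambda\delta}$. The dimension-vector identity follows from the $\mathbb{N}[I]$-grading.

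\emph{Step 2: Integrality.} Using Lemma \ref{lem 7 E pi expression} transported to $\mathcal{T}_{j}$ via $F_{j}$, expand
$$E_{\pi,j}=\mbf1_{M(\pi)_{j}}+\sum_{\mu<\pi,\,\mu\in\Pi_{j}\setminus\Pi_{j}^{a}}g_{\mu,j}^{\pi}\,\mbf1_{M(\mu)_{j}},\qquad g_{\mu,j}^{\pi}\in\mathbb{Z},$$
and similarly for $E_{\pi',j}$. Each summand $\mbf1_{M(\mu)_{j}}$ is a characteristic function, so each product $M_{\omega\delta}\mbf1_{M(\mu)_{j}}$ takes integer values pointwise (being an Euler characteristic of a filtration variety $\mathcal{F}(\mathrm{supp}(M_{\omega\delta}),\mathcal{O}_{M(\mu)_{j}};\cdot)$). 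Thus $M_{\omega\delta}E_{\pi,j}$ is a $\mathbb{Z}$-valued constructible function. For each shape $(\pi',\lambda)$ I will pick a distinguished module $N_{\pi',\lambda,j}=M(\pi')_{j}\oplus N_{\lambda}$ with $N_{\lambda}$ a sufficiently generic element of $\mathcal{S}_{\lambda}^{Q}$, and evaluate both sides there. Running an induction on the geometric order on $\Pi_{j}^{a}$ (and on dominance for $\lambda$), the triangularity of each $E_{\pi',j}$ against $\mbf1_{M(\pi')_{j}}$ isolates $c_{\pi',\lambda,j}$ as a $\mathbb{Z}$-linear combination of values of $M_{\omega\delta}E_{\pi,j}$ at previously treated modules, all of which are integers.

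\emph{Main obstacle.} Compared with Lemmas \ref{lem 8 comm relation pi P} and \ref{lem 8 comm relation pi I}, the difficulty is that neither $M_{\omega\delta}$ nor $E_{\pi,j}$ is a single characteristic function, and the supports of different RHS summands $E_{\pi',j}M_{\lambda\delta}$ can intersect, since $M_{\lambda\delta}$ itself hits $\mathcal{T}_{j}$ through the modules $S_{j,k_{j},mr_{j}}^{Q}$. Disentangling these overlaps requires the inductive separation described above, exploiting the geometric order on $\Pi_{j}$ (Lemma \ref{lem 7 order compatible}) to guarantee that at each stage the $(\pi',\lambda)$-block contributes a single leading term. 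Once the induction is set up, the integrality of each $c_{\pi',\lambda,j}$ is immediate from the standing principle, used throughout the paper, that Euler characteristics are integers.
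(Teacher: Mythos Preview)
Your overall strategy matches the paper's: restrict the shape via tube orthogonality, then evaluate at test modules $N_{\pi',\lambda,j}=M(\pi')_{j}\oplus R$ with $R\in\mathcal{S}_{\lambda}$ homogeneous, and induct. The point where your sketch diverges from a working proof is the choice of induction variable. You propose to induct on the geometric order on $\Pi_{j}^{a}$ together with dominance on $\lambda$; neither of these controls the support overlaps you correctly identify in your ``Main obstacle'' paragraph.

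Concretely, if $N_{\pi',\lambda,j}$ lies in the support of some other $E_{\pi'',j}M_{\lambda''\delta}$, then any submodule $W$ in $\mathrm{supp}(M_{\lambda''\delta})$ must contain $R$ (since $\Hom(R,\mathcal{T}_{j})=0$), and the extra piece $W\cap M(\pi')_{j}$ is a direct sum of modules $S_{j,k_{j},mr_{j}}^{Q}$. Hence the overlap forces $|\lambda''|>|\lambda|$, with no constraint expressible via dominance within a fixed $|\lambda|$ or via the geometric order on $\pi''$. The paper therefore runs an \emph{inverse induction on $k=|\lambda|$}: choose $m$ so that all coefficients with $|\lambda|>m$ vanish; for $|\lambda|=m$ the test module is hit by exactly one term, giving $c_{\pi',\lambda,j}$ directly as a $\mathbb{Z}$-combination of Euler characteristics (via the expansion of $E_{\pi,j}$ from Lemma~\ref{lem 7 E pi expression}); then step down in $|\lambda|$, at each stage subtracting the already-integral contributions from strictly larger $|\lambda''|$. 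Your triangularity observation for $E_{\pi',j}$ is used, but only to guarantee $E_{\pi',j}M_{\lambda\delta}(N_{\pi',\lambda,j})=1$; it does not by itself separate the $(\pi'',\lambda'')$-terms. Once you replace your induction scheme by this descending induction on $|\lambda|$, the rest of your argument goes through as written.
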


\begin{proof}
Since there are no non-trivial extensions between different tubes,
$M_{\omega\delta}E_{\pi,j}$ has the desired expression with
$c_{\pi',\lambda,j}\in\mathbb{C}$.

We prove $c_{\pi',\lambda,j}\in\mathbb{Z}$ by inverse induction.

We can find a positive integer $m$ such that for any $k>m$ and any
$\lambda\vdash k$, $\pi'\in\Pi_{j}^{a}$, the coefficient
$c_{\pi',\lambda,j}=0$. Now fix $\pi'\in\Pi_{j}^{a}$ and
$\lambda\vdash m$, let $N_{\pi',j,\lambda}$ be a module isomorphic
to the direct sum of $M(\pi')_{j}$ and $R$ where
$R\in\mathcal{S}_{\lambda}$. It is not difficult to see that
$N_{\pi',j,\lambda}$ is not contained in the support of
$E_{\pi'',j}M_{\lambda'\delta}$ unless $\pi''=\pi'$ and
$\lambda'=\lambda$.

Hence we have
$$M_{\omega\delta}E_{\pi,j}(N_{\pi',j,\lambda})=c_{\pi',\lambda,j}E_{\pi',j}M_{\lambda\delta}(N_{\pi',j,\lambda})=c_{\pi',\lambda,j}$$

Since
$$E_{\pi,j}=\mbf1_{M(\pi)_{j}}+\sum_{\lambda\in\Pi_{j}\setminus\Pi_{j}^{a},\lambda<\pi}g_{\lambda,j}^{\pi}\mbf1_{M(\lambda)_{j}}$$
with $g_{\lambda,j}^{\pi}\in\mathbb{Z}$, we have
$$c_{\pi',\lambda,j}=\chi(\mathcal{F}(\omega,M(\pi)_{j};N_{\pi',j,\lambda}))+\sum_{\lambda\in\Pi_{j}\setminus\Pi_{j}^{a},\lambda<\pi}g_{\lambda,j}^{\pi}\chi(\mathcal{F}(\omega,M(\lambda)_{j};N_{\pi',j,\lambda}))\in\mathbb{Z}.$$

Now we assume that $c_{\pi',\lambda}\in\mathbb{Z}$ for all
$\pi'\in\Pi_{j}^{a}$ and $\lambda\vdash k$, $n+1\leq k\leq m$.
Consider $\lambda'\vdash n$ and $\pi'\in\Pi^{a}$. Again we choose a
module $N_{\pi',j,\lambda'}\simeq M(\pi')_{j}\oplus R$ where
$R\in\mathcal{S}_{\lambda'}$. We can see that $N_{\pi',j,\lambda'}$
is in the support of $E_{\pi'',j}M_{\lambda''\delta}$ only if
$\pi''=\pi'$ and $\lambda''=\lambda'$ or $\lambda''\vdash k$ for
some $k>n$.

Hence we have
\begin{equation*}
\begin{split}
&M_{\omega\delta}E_{\pi,j}(N_{\pi',j,\lambda'})\\
&=c_{\pi',\lambda',j}E_{\pi',j}M_{\lambda'\delta}(N_{\rho',j,\lambda'})+\sum_{\pi''\in\Pi_{j}^{a},|\lambda''|>|\lambda'|}c_{\pi'',\lambda'',j}E_{\pi'',j}M_{\lambda''\delta}(N_{\pi',j,\lambda'})\\
&=c_{\pi',\lambda',j}+\sum_{\pi''\in\Pi_{j}^{a},|\lambda''|>|\lambda'|}c_{\pi'',\lambda'',j}E_{\pi'',j}M_{\lambda''\delta}(N_{\pi',j,\lambda'})
\end{split}
\end{equation*}

On the other hand, $M_{\omega\delta}E_{\pi,j}(N_{\pi',j,\lambda'})$
and $E_{\pi'',j}M_{\lambda''\delta}(N_{\pi',j,\lambda'})$ are all in
the $\mathbb{Z}$-form.

By the inductive hypothesis, $c_{\pi'',\lambda'',j}\in\mathbb{Z}$
for all $|\lambda''|>|\lambda'|$ and $\pi''\in\Pi_{j}^{a}$. Thus we
know that $c_{\pi',\lambda',j}\in\mathbb{Z}$.

Finally, by induction we can see all the coefficients
$c_{\pi',\lambda,j}\in\mathbb{Z}$.
\end{proof}

Let $\mathcal{J}(\hat{I})$ (resp. $\mathcal{J}(\hat{P})$) be the
subset of $\mathcal{J}$ consisting of
$\mbfc=(P_{\mbfc},0,\pi_{\mbfc},\omega_{\mbfc})$ (resp.
$\mbfc=(0,I_{\mbfc},\pi_{\mbfc},\omega_{\mbfc})$).

Let $\mathcal{S}_{\mbfc}$ be the set of all modules $N\simeq
P_{\mbfc}\oplus M(\pi_{\mbfc 1})_{1}\oplus\cdots\oplus M(\pi_{\mbfc
s})_{s}\oplus R\oplus I_{\mbfc}$, where
$R\in\mathcal{S}_{\omega_{\mbfc}}$ and all direct summands of $R$
are in the homogeneous tubes.

\begin{lem}\label{lem 8 comm relation omega P}
For any $\omega\vdash n,n\in\mathbb{N}$ and $P\in\prep(Q)$, we have
$$M_{\omega\delta}\mbf1_{P}=\sum_{\mbfc\in\mathcal{J}(\hat{I})}d_{\mbfc}B_{\mbfc}$$
with $\dimv B_{\mbfc}=n\delta+\dimv P$ and $d_{\mbfc}\in\mathbb{Z}$.
\end{lem}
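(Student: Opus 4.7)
The plan is to mirror the strategy of Lemmas \ref{lem 8 comm relation pi P}--\ref{lem 8 comm relation pi omega}: first expand in the $\mathbb{C}$-basis $\{B_{\mbfc}\}$, then restrict the indexing set by a representation-theoretic argument, and finally recover integrality by evaluating at carefully chosen test modules.

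First, since by \ref{8 basis over C} the set $\{B_{\mbfc}\mid\mbfc\in\mathcal{J}\}$ is a $\mathbb{C}$-basis of $\mathcal{C}(Q)$, and since $M_{\omega\delta}\mbf1_{P}$ is homogeneous of degree $n\delta+\dimv P$, I can write
$$M_{\omega\delta}\mbf1_{P}=\sum_{\mbfc\in\mathcal{J}}d_{\mbfc}B_{\mbfc}\quad(d_{\mbfc}\in\mathbb{C}),$$
with the sum ranging only over $\mbfc$ satisfying $\dimv B_{\mbfc}=n\delta+\dimv P$.

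Second, I argue that $d_{\mbfc}=0$ whenever $I_{\mbfc}\neq 0$, so the sum runs over $\mathcal{J}(\hat{I})$. Any $N$ in the support of $M_{\omega\delta}\mbf1_{P}$ fits in a short exact sequence $0\to P\to N\to R\to 0$ where $R$ lies in the support of $M_{\omega\delta}$ and is in particular regular. If $N$ had a nonzero preinjective summand $N_I$, then by Proposition \ref{prop 4 repdirect} the composition $N_I\hookrightarrow N\twoheadrightarrow R$ is zero (since $\Hom(\prei(Q),\reg(Q))=0$), so $N_I\subseteq P$; but $\Hom(\prei(Q),\prep(Q))=0$, forcing $N_I=0$. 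Thus no $B_{\mbfc}$ with nonzero $I_{\mbfc}$ can appear.

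Third, I establish $d_{\mbfc}\in\mathbb{Z}$ by a double induction, along the same lines as the proof of Lemma \ref{lem 8 comm relation pi omega}. Total-order $\mathcal{J}(\hat{I})$ by the pair $(|\omega_{\mbfc}|,\sum_{j}\dimv M(\pi_{\mbfc j})_{j})$ (refined to a total order on finitely many competing indices), and argue by inverse induction. For each $\mbfc$, pick a test module $N_{\mbfc}\in\mathcal{S}_{\mbfc}$, and evaluate both sides. The left-hand side $M_{\omega\delta}\mbf1_{P}(N_{\mbfc})$ is an Euler characteristic of a constructible variety, hence an integer. On the right-hand side, using Lemma \ref{lem 7 E pi expression} to expand each $E_{\pi_{\mbfc' j},j}$ as a $\mathbb{Z}$-combination of characteristic functions $\mbf1_{M(\lambda)_{j}}$ and unfolding $M_{\omega_{\mbfc'}\delta}$ via its definition, each value $B_{\mbfc'}(N_{\mbfc})$ becomes a $\mathbb{Z}$-combination of Euler characteristics of orbit-intersection varieties, hence an integer. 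A suitable choice of $N_{\mbfc}$ ensures $B_{\mbfc}(N_{\mbfc})=1$, and by inductive hypothesis all $d_{\mbfc'}$ with $\mbfc'$ strictly greater in the chosen order are already in $\mathbb{Z}$, so the equation $M_{\omega\delta}\mbf1_{P}(N_{\mbfc})=d_{\mbfc}+\sum_{\mbfc'>\mbfc}d_{\mbfc'}B_{\mbfc'}(N_{\mbfc})$ yields $d_{\mbfc}\in\mathbb{Z}$.

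The main obstacle is the bookkeeping in Step 3: because $M_{\omega\delta}$ is supported on regular modules spread across both the homogeneous tubes and certain indecomposables $S_{j,k_{j},\omega_{i}r_{j}}^{Q}$ inside the non-homogeneous tubes (via the Kronecker embedding $F$ of Lemma \ref{lem 8 embed kronecker}), a single test module may lie in the support of $B_{\mbfc'}$ for several $\mbfc'$ that differ by trading contributions between $\omega_{\mbfc'}$ and the $\pi_{\mbfc' j}$. Setting up the induction order so that these competing indices are resolved unambiguously, and verifying that $B_{\mbfc}(N_{\mbfc})=1$ on the chosen representative, is the real content; once this is in place the integrality follows formally from the integer-valuedness of Euler characteristics, exactly as in the preceding lemmas.
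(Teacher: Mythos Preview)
Your proposal is correct and follows essentially the same approach as the paper's own proof: restrict to $\mathcal{J}(\hat{I})$ by noting that extensions of regular by preprojective modules have no preinjective summands, then establish integrality by the inverse-induction method of Lemma \ref{lem 8 comm relation pi omega}. The paper's proof is in fact just a two-sentence sketch referring to that lemma, while you have spelled out the induction and (correctly) flagged the bookkeeping issue arising from the Kronecker embedding placing part of the support of $M_{\omega\delta}$ inside the non-homogeneous tubes.
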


\begin{proof}
The extension of a regular module by a preprojective contains no
direct summands of preinjective modules. Note that the support of
$M_{\omega\delta}$ contains not only modules in the homogeneous
tubes but also non-homogeneous tubes. So the terms $E_{\pi_{\mbfc
j},j}$ ($1\leq j\leq s$) occur in the right hand side. Hence
$\mbfc\in\mathcal{J}(\hat{I})$.

We can prove $d_{\mbfc}\in\mathbb{Z}$ by completely similar
arguments as in the proof of Lemma \ref{lem 8 comm relation pi
omega}.
\end{proof}

By similar methods we can prove:

\begin{lem}\label{lem 8 comm relation omega I}
For any $\omega\vdash n,n\in\mathbb{N}$ and $I\in\prei(Q)$, we have
$$\mbf1_{I}M_{\omega\delta}=\sum_{e_{\mbfc\in\mathcal{J}(\hat{P})}}e_{\mbfc}B_{\mbfc}$$
with $\dimv B_{\mbfc}=\dimv I+n\delta$ and $e_{\mbfc}\in\mathbb{Z}$.
\end{lem}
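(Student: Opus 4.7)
\emph{Proof plan.} The plan is to mirror the proof of Lemma~\ref{lem 8 comm relation omega P}, exchanging the roles of the preprojective and preinjective parts, and then to promote the $\mathbb{C}$-coefficients to integers exactly as in Lemma~\ref{lem 8 comm relation pi omega}. First I would show that $\mbf1_I M_{\omega\delta}$ has an expansion supported only on $\mbfc \in \mathcal{J}(\hat{P})$. If $L$ lies in the support of $\mbf1_I M_{\omega\delta}$, then there is a short exact sequence $0 \to R \to L \to I \to 0$ with $R$ a direct sum of regular indecomposables of total dimension $n\delta$. If $L$ carried a nonzero indecomposable preprojective summand $P''$, composing $R \hookrightarrow L$ with the projection $L \twoheadrightarrow P''$ would produce a morphism $R \to P''$, which vanishes by Proposition~\ref{prop 4 repdirect}. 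Consequently $R$ would lie in $\ker(L \twoheadrightarrow P'')$, forcing $P''$ to survive as a summand of $L/R \cong I$, contradicting the fact that $I$ is preinjective. Since $\{B_\mbfc\}_{\mbfc \in \mathcal{J}}$ is a $\mathbb{C}$-basis of $\mathcal{C}(Q)$ by Section~\ref{8 basis over C}, we may therefore write
$$\mbf1_I M_{\omega\delta} = \sum_{\mbfc \in \mathcal{J}(\hat{P})} e_\mbfc B_\mbfc, \quad e_\mbfc \in \mathbb{C}, \quad \dimv B_\mbfc = \dimv I + n\delta.$$

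For the integrality of the $e_\mbfc$ I would run the downward induction on $|\omega_\mbfc|$ used in Lemma~\ref{lem 8 comm relation pi omega}. For each $\mbfc \in \mathcal{J}(\hat{P})$, let $\mathcal{S}_\mbfc$ denote the set of modules isomorphic to $M(\pi_{\mbfc 1})_1 \oplus \cdots \oplus M(\pi_{\mbfc s})_s \oplus R \oplus I_\mbfc$ with $R \in \mathcal{S}_{\omega_\mbfc}$ (all summands of $R$ in homogeneous tubes). Choose $m$ large enough that $e_\mbfc = 0$ whenever $|\omega_\mbfc| > m$. For $\mbfc$ with $|\omega_\mbfc| = m$ and a representative $N_\mbfc \in \mathcal{S}_\mbfc$, observing that $N_\mbfc$ lies in the support of $B_{\mbfc'}$ only when $\mbfc' = \mbfc$ yields $e_\mbfc = \mbf1_I M_{\omega\delta}(N_\mbfc)$, which is an Euler characteristic and hence an integer. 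The inductive step for $|\omega_\mbfc| < m$ expresses $\mbf1_I M_{\omega\delta}(N_\mbfc)$ as $e_\mbfc \cdot B_\mbfc(N_\mbfc)$ plus an integer combination of terms $e_{\mbfc'} \cdot B_{\mbfc'}(N_\mbfc)$ with $|\omega_{\mbfc'}| > |\omega_\mbfc|$; Lemma~\ref{lem 7 E pi expression} ensures each $B_{\mbfc'}(N_\mbfc) \in \mathbb{Z}$, and by induction $e_{\mbfc'} \in \mathbb{Z}$, whence $e_\mbfc \in \mathbb{Z}$.

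The main obstacle is the support argument: the extensions $0 \to R \to L \to I \to 0$ need not split (since $\Ext^1(I,R)$ can be nonzero), so ruling out preprojective summands is not automatic but depends on the asymmetric vanishings $\Hom(R,P)=0$ in Proposition~\ref{prop 4 repdirect}. Once the support has been pinned down to modules without preprojective summands, the integrality induction is a formal transcription of the argument in Lemma~\ref{lem 8 comm relation pi omega} and presents no new difficulty.
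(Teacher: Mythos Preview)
Your proposal is correct and matches the paper's approach: the paper gives no separate proof for this lemma, stating only ``By similar methods we can prove'' after Lemma~\ref{lem 8 comm relation omega P}, and your argument is precisely the dual of that lemma combined with the downward induction from Lemma~\ref{lem 8 comm relation pi omega}. Your support argument ruling out preprojective summands via $\Hom(R,P)=0$ is the correct dualization of the paper's one-line claim that ``the extension of a regular module by a preprojective contains no direct summands of preinjective modules.''
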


\begin{lem}\label{lem 8 comm relation P I}
For any $P\in\prep(Q)$ and $I\in\prei(Q)$, we have
$$\mbf1_{I}\mbf1_{P}=\sum_{\mbfc\in\mathcal{J}}f_{\mbfc}B_{\mbfc}$$
with $\dimv B_{\mbfc}=\dimv I+\dimv P$ and $f_{\mbfc}\in\mathbb{Z}$.
\end{lem}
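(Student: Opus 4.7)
The plan is to follow the template of Proposition~\ref{prop 6 basis kronecker} and the commutation Lemmas \ref{lem 8 comm relation pi P}--\ref{lem 8 comm relation omega I}. From~\ref{8 basis over C} we know that $\{B_{\mbfc}\mid\mbfc\in\mathcal{J}\}$ is a $\mathbb{C}$-basis of $\mathcal{C}(Q)$, so we may write
$$\mbf1_I\mbf1_P=\sum_{\mbfc}f_{\mbfc}B_{\mbfc},\qquad f_{\mbfc}\in\mathbb{C},$$
the sum ranging over the finite set of $\mbfc$ with $\dimv B_{\mbfc}=\dimv I+\dimv P$. Since by~\ref{3 char function} the value $\mbf1_I\mbf1_P(N)=\chi(\mathcal{F}(I,P;N))$ is an integer for every module $N$, it suffices to isolate each $f_{\mbfc}$ by evaluation at a carefully chosen test module.

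For a given $\mbfc=(P_{\mbfc},I_{\mbfc},\pi_{\mbfc},\omega_{\mbfc})$, I would take
$$N_{\mbfc}\simeq P_{\mbfc}\oplus\bigoplus_{j=1}^{s}M(\pi_{\mbfc j})_j\oplus R_{\mbfc}\oplus I_{\mbfc},$$
with $R_{\mbfc}\in\mathcal{S}_{\omega_{\mbfc}}$ chosen so that every indecomposable summand of $R_{\mbfc}$ lies in a \emph{homogeneous} tube $\mathcal{T}_x$, $x\in\mathbb{P}^1\setminus J$. Any module in the support of $B_{\mbfc'}$ has preprojective summand $P_{\mbfc'}$ and preinjective summand $I_{\mbfc'}$, so $B_{\mbfc'}(N_{\mbfc})\ne 0$ forces $P_{\mbfc'}=P_{\mbfc}$ and $I_{\mbfc'}=I_{\mbfc}$. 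Expanding each factor $E_{\pi_{\mbfc' j},j}$ via Lemma~\ref{lem 7 E pi expression} and applying Lemma~\ref{lem 3 rep direct} to split the direct-sum decomposition of $N_{\mbfc}$, the evaluation $B_{\mbfc'}(N_{\mbfc})$ becomes an explicit integer built from the $g^{\pi}_{\lambda}\in\mathbb{Z}$ of Lemma~\ref{lem 7 E pi expression} and Euler characteristics of flag varieties.

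I then proceed by an inverse induction on an order built as in the proof of Lemma~\ref{lem 8 comm relation pi omega}: first fix $(P_{\mbfc},I_{\mbfc})$, then filter the remaining data by $|\omega_{\mbfc}|$, refined by the dominance order on each $\pi_{\mbfc j}$. At the top of this filtration only $\mbfc'=\mbfc$ contributes at $N_{\mbfc}$, with $B_{\mbfc}(N_{\mbfc})=1$, giving $f_{\mbfc}=\chi(\mathcal{F}(I,P;N_{\mbfc}))\in\mathbb{Z}$. For smaller $\mbfc$ the evaluation yields
$$\chi(\mathcal{F}(I,P;N_{\mbfc}))=f_{\mbfc}+\sum_{\mbfc'>\mbfc}a_{\mbfc,\mbfc'}f_{\mbfc'},$$
with $a_{\mbfc,\mbfc'}\in\mathbb{Z}$; by the inductive hypothesis every $f_{\mbfc'}$ on the right is already integral, and therefore $f_{\mbfc}\in\mathbb{Z}$ as well.

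The principal obstacle is the overlap coming from the fact that $M_{\omega\delta}$ does \emph{not} detect only homogeneous regulars: via the Kronecker embedding $F$ of Lemma~\ref{lem 8 embed kronecker}, some simple $K$-modules are sent into the non-homogeneous tubes $\mathcal{T}_j$, so the supports of $M_{\omega_{\mbfc'}\delta}$ and of the $E_{\pi_{\mbfc' j},j}$ can interact nontrivially. Restricting $R_{\mbfc}$ to purely homogeneous summands is exactly what suppresses this mixing at the test-module level, so that only genuinely later $\mbfc'$ in the induction order can contribute to $B_{\mbfc'}(N_{\mbfc})$. Making this filtration precise, along the same lines as Lemmas~\ref{lem 8 comm relation pi omega} and~\ref{lem 8 comm relation omega P}, is the technical heart of the argument.
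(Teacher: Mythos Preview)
Your proposal is correct and follows the same template the paper uses throughout Section~\ref{sec 8}: the paper does not spell out a proof of this lemma at all, merely writing ``By similar methods we can prove'' after Lemma~\ref{lem 8 comm relation omega P}, so your sketch is in fact \emph{more} detailed than what the paper provides. The essential mechanism---write the product in the $\mathbb{C}$-basis $\{B_{\mbfc}\}$, evaluate at a test module $N_{\mbfc}$ with purely homogeneous $R_{\mbfc}$, and run an inverse induction on $|\omega_{\mbfc}|$ exactly as in Lemma~\ref{lem 8 comm relation pi omega}---is precisely what the paper intends.

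Two small remarks. First, the secondary refinement ``by the dominance order on each $\pi_{\mbfc j}$'' is not actually needed: once $|\omega_{\mbfc'}|=|\omega_{\mbfc}|$, the homogeneous part of $N_{\mbfc}$ forces $\omega_{\mbfc'}=\omega_{\mbfc}$, and then since each $\pi_{\mbfc j}\in\Pi_j^{a}$ while the lower terms of $E_{\pi',j}$ in Lemma~\ref{lem 7 E pi expression} are supported on $\Pi_j\setminus\Pi_j^{a}$, one gets $\pi_{\mbfc' j}=\pi_{\mbfc j}$ immediately. Second, even if you did need a refinement, the relevant order on $\Pi_j$ is the geometric (orbit-closure) order of \ref{7 geometric order}, not the dominance order on partitions; the latter belongs to the symmetric-function side of Section~\ref{sec 6}.
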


Now by all the lemmas above, we see that any monomial of
$\{B_{\mbfc}|\mbfc\in\mathcal{J}\}$ is still in the
$\mathbb{Z}$-span of the set $\{B_{\mbfc}|\mbfc\in\mathcal{J}\}$.
Therefore $\{B_{\mbfc}|\mbfc\in\mathcal{J}\}$ is a
$\mathbb{Z}$-basis of $\mathcal{C}_{\mathbb{Z}}(Q)$.

\bigskip
\par\noindent {\bf Acknowledgments.}
The authors would like to thank Professor J.Xiao for his ideas and
encouraging supervision.

\end{document}